\documentclass{paper}
\title{The complex projective plane as a ball quotient}

\RequirePackage{changepage} 

\opext{Prop,prop}
\opext{Isom,Bihol,Deck,Normalizer}
\opext{Ric}
\rmext{sing,norm,lin,orb,even}

\declarecommand\boldsymbol[1]{\textbf{\emph{#1}}}
\declarecommand{\biset}[1]{\mathord{\l\{\mskip-3mu\l\{#1\r\}\mskip-3mu\r\}}}

\begin{document}

\begin{abstract}
    In 1986, Deligne and Mostow constructed a ball quotient $\BB^2 / \Gamma$ biholomorphic to the complex projective plane $\PP^2$ whose branch locus is a line arrangement. In this paper, we show that if $\PP^2$ is realized as a ball quotient whose branch divisor $D$ is an arrangement of smooth pairwise normal-crossing curves, then the orbifold $(\PP^2,D)$ is isomorphic to either the Deligne--Mostow example or a certain degree 9 cover of it. This classification of ``ball quotient structures'' on $\PP^2$ generalizes the $\PP^1$ case due to Poincar\'e.
\end{abstract}

\section{Introduction}
In 1882, Poincar\'e classified the ball quotient structures on $\PP^1$ in the following sense: If a quotient $\BB^1 / \Gamma$ of the 1-dimensional open unit ball $\BB^1$ by a lattice $\Gamma$ is biholomorphic to the complex projective line $\PP^1$, then the branch indices $(b_1,\dots,b_k)$ of the quotient map satisfy
\begin{gather}
\label{intro:P1}
    \sum_{i=1}^k \l( 1 - \frac{1}{b_i} \r) > 2,
\end{gather}
and conversely, every tuple $(b_1,\dots,b_k)$ satisfying \eqref{intro:P1} is realizable in this manner. This result is implicit in Poincar\'e's original paper \cite{poincare-1882-tgf} on elliptic elements in Fuchsian groups.

This paper considers the analogous question in dimension 2:
\vspace{-0.5ex}
\begin{adjustwidth}{0.96cm}{0.96cm}
\it
If $\PP^2$ is realized as a quotient $\BB^2 / \Gamma$ then which divisors $D = \sum_i \big( 1 - \frac{1}{b_i} \big) D_i$ on $\PP^2$ can occur \\[-0.4ex]
as the branch divisor of the quotient map $\BB^2 \to \PP^2$?
\end{adjustwidth}
\medskip

A \emph{ball quotient} is a quotient $X = \BB^n / \Gamma$ of the open unit ball $\BB^n \subset \CC^n$ by a (not necessarily torsion-free) lattice $\Gamma < \Aut \BB^n \iso \PU_{1,n}(\CC)$. If $\Gamma$ has torsion then $X$ has the structure of a complex-hyperbolic orbifold $(X,D)$ whose orbifold divisor $D$ is given by the branch divisor of the quotient map $\BB^n \to X$. A divisor $D$ arising in this manner is called a \emph{ball quotient divisor} on $X$ and the pair $(X,D)$ is called a \emph{ball quotient orbifold}.

\paragraph{Two examples.}
It is not obvious \emph{a priori} whether $\PP^2$ admits a ball quotient divisor at all. In 1986, Deligne--Mostow \cite{deligne-mostow-1986-mhf} constructed a family of ball quotients arising as certain stable moduli spaces, with the objective of constructing new non-arithmetic lattices in $\PU_{1,n}(\CC)$. The Deligne--Mostow ball quotient with weights $\big( \frac 56, \frac{5}{12}, \frac 14, \frac 14, \frac 14 \big)$ is biholomorphic to $\PP^2$ and the branch divisor $D_Q$ of the quotient map $\BB^2 \to \PP^2$ is the \emph{complete quadrilateral}
\begin{gather*}
    XYZ(X-Y)(X-Z)(Y-Z) = 0
\end{gather*}
with weight 3 on lines $X$, $Y$, $Z$ and weight 2 on the remaining lines (depicted in \cref{fig:line-arrangements}, right). This line arrangement is the unique (up to projective automorphisms) arrangement of 6 lines with 4 triple points, since $\PGL_3(\CC)$ acts transitively on ordered 4-tuples of points with no three collinear. For a direct proof that $D_Q$ is a ball quotient divisor, see \cref{P2:DQ-DH-are-ball-quotients}.
\medskip

A second example of a ball quotient structure on $\PP^2$ is produced by taking a certain finite orbifold cover of $(\PP^2, D_Q)$: The degree 9 map $f_3: \PP^2 \to \PP^2$ given by
\begin{align*}
    (X:Y:Z) &\mapsto (X^3:Y^3:Z^3)
\end{align*}
is a normal branched cover of $\PP^2$ whose branch locus is the triangle $XYZ = 0$ with all branch indices equal to 3. This triangle makes up 3 of the 6 lines in the complete quadrilateral $D_Q$; the preimage of the remaining 3 lines under $f_3$ is the \emph{dual Hesse arrangement}
\begin{gather*}
    (X^3 - Y^3)(X^3 - Z^3)(Y^3 - Z^3) = 0
\end{gather*}
(see \cref{fig:line-arrangements} for a partial illustration). This is the unique (up to projective automorphisms) arrangement of 9 lines in $\PP^2$ with 12 triple points \cite[Theorem 1]{lampa-baczynska-wojcik-2019-dha}. The weighted arrangement $D_H$ given by the dual Hesse arrangement with all lines given weight 2 turns out to be a ball quotient divisor (proven in \cref{P2:DQ-DH-are-ball-quotients}, see also \cref{covering-ball-quotient}).

{ \centering
\begin{tabular}{ C{0.3\textwidth} C{0.1\textwidth} C{0.3\textwidth} }
\img[0.3]{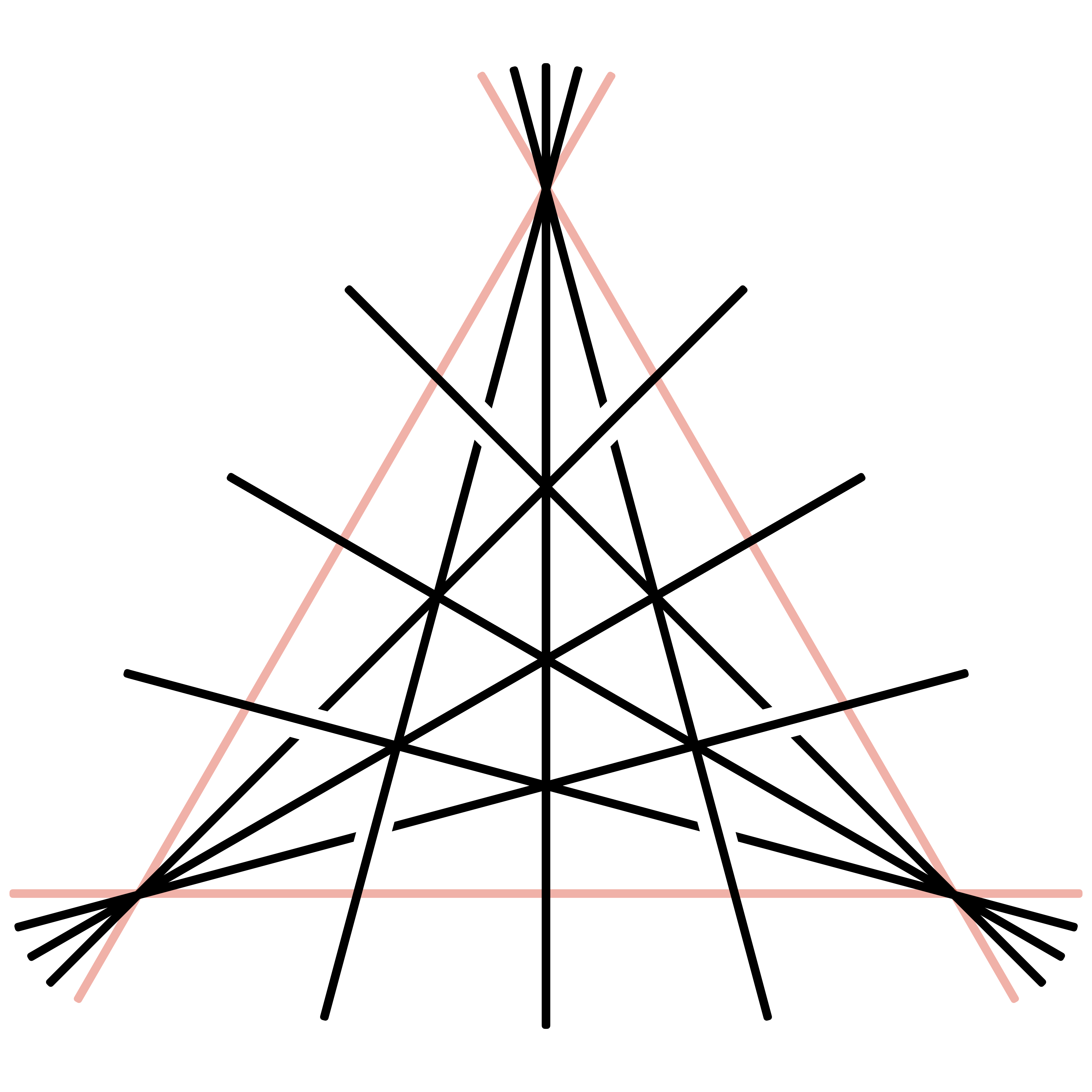}
& ${\Large \xlongrightarrow {\quad f_3 \quad}}$
& \img[0.3]{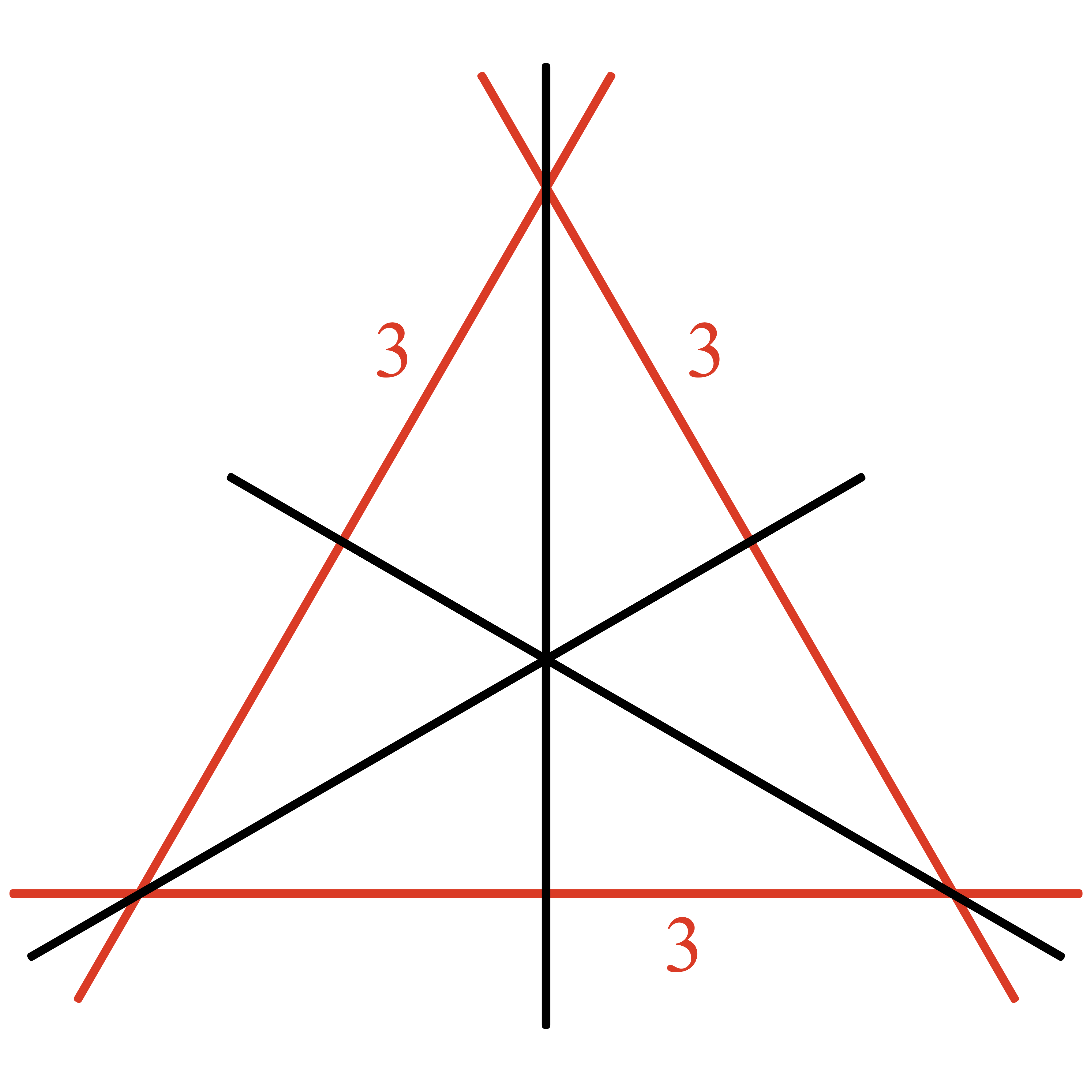} \\
$D_H$ & & $D_Q$
\end{tabular}
\captionsetup{width=0.88\textwidth}
\captionof{figure}{Illustration of the orbifold divisors $D_H$ and $D_Q$ on $\PP^2$. Weight-2 lines have been left unlabelled. The dual Hesse arrangement $D_H$ is represented by the 9 black lines on the left (the faint red lines are not part of $D_H$). Note that this is only a partial illustration, as two of the 12 triple points are not pictured---the dual Hesse arrangement cannot be realized by a real line arrangement, so no illustration is completely faithful. The complete quadrilateral $D_Q$ is pictured on the right, consisting of the black lines \emph{and} the red lines for 6 lines in total. The red triangles denote the ramification and branch loci of the degree 9 map $f_3: \PP^2 \to \PP^2$. Each black line on the left is mapped 3-to-1 onto one of the black lines on the right by $f_3$.}
\label{fig:line-arrangements}
}
\medskip

The main result of this paper is the following:

\begin{theorem}
\label{main-theorem}
Let $D$ be an orbifold divisor on $\PP^2$ whose irreducible components are smooth and pairwise normal-crossing. Then $(\PP^2,D)$ is a ball quotient orbifold if and only if $D$ is (projectively equivalent to) $D_Q$ or $D_H$ as described above.
\end{theorem}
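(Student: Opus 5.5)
We prove the two directions separately. For the ``if'' direction we appeal to \cref{P2:DQ-DH-are-ball-quotients}. The substantive content is the ``only if'' direction, where the main tool is the orbifold Bogomolov--Miyaoka--Yau equality. If $(\PP^2,D)$ is a ball quotient orbifold with $D=\sum_i(1-\frac1{b_i})D_i$, then $K_{\PP^2}+D$ is ample and the orbifold Chern numbers satisfy $c_1^2=3c_2$. We also need a local condition: at each intersection point of two components, the local orbifold group must be a finite subgroup of $U(2)$ acting with the prescribed branching. For normal-crossing smooth components meeting at a point with indices $b_i,b_j$, the local model is $\ZZ/b_i\times\ZZ/b_j$, which is automatic. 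Since the components are pairwise normal-crossing but more than two may pass through a point, we also have to handle points where $k\ge 3$ components meet transversally. There the local group is a finite subgroup of $U(2)$ whose image in $PU(2)$ is a finite rotation group with $k$ branch points of the induced $\PP^1$-orbifold. This forces $k=3$ and $(b_i,b_j,b_l)$ to be a spherical triple, i.e.\ $\sum 1/b>1$. The local orbifold Euler number at such a point is $1/|G|$, with $|G|$ computable from the triple.

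Next we write both Chern numbers explicitly. Let $d_i=\deg D_i$ and $g_i=(d_i-1)(d_i-2)/2$. Then
\begin{gather*}
c_1^2=\Big(-3+\sum_i\big(1-\tfrac1{b_i}\big)d_i\Big)^2,
\end{gather*}
and $c_2$ equals $3$ corrected by a stratified sum. Each curve contributes $-(1-\frac1{b_i})e(D_i^\circ)$, where $D_i^\circ$ is $D_i$ minus its intersection points. Each point $p$ of multiplicity $k_p$ contributes $\frac1{|G_p|}-1+\ldots$ in the standard inclusion--exclusion form. Combining this with B\'ezout, $\sum_p(\text{local intersections})=\sum_{i<j}d_id_j$, turns BMY equality into a single Diophantine-type identity. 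Its unknowns are the degrees $d_i$, the indices $b_i$, and the numbers $t_{(b,b',b'')}$ of triple points of each type.

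We also need extra constraints to cut this down to finitely many cases. First, each component $D_i$ carries an induced 1-dimensional orbifold structure, namely a totally geodesic curve or its normalization. Either $D_i$ is itself a ball-quotient divisor image, which imposes a Poincar\'e-type hyperbolicity condition, or a local equality argument, as in Hirzebruch's line-arrangement computations, gives per-curve relations. The cleanest route is the ``relative'' proportionality: the restriction of the metric to the fixed curve makes $(D_i,\text{induced points})$ a hyperbolic orbifold curve, and we have $(K+D)\cdot D_i$ proportional to its orbifold Euler characteristic, with a fixed factor (the totally geodesic complex line relation, with constant $3/2$ or similar). Summing these relations against the global equality forces the $d_i$ to be small. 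We expect to show every $d_i=1$, so that $D$ is a line arrangement. Conics and cubics should fail because $e(D_i)$ is too large relative to the available intersection points. Having reduced to weighted line arrangements with only double and triple points, we enumerate. The per-line relations and the Hirzebruch--type count $\sum t_k$ constraints, together with Melchior's inequality $t_2\ge 3+t_4+\dots$ (here $t_2\ge3$), pin down $(n,t_2,t_3)=(6,3,4)$ or $(9,0,12)$ with the stated weights. The uniqueness results quoted in the introduction then identify the arrangements with $D_Q$ and $D_H$.

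We expect the main obstacle to be the degree bound: excluding components of degree $\ge2$, and controlling the unbounded weights $b_i$. A finite search only becomes possible once we extract enough per-component equations, which should come from a Chern-number equality on the orbifold curve $D_i$ obtained from the splitting of the tangent bundle along a totally geodesic divisor. After that, the case analysis is lengthy but elementary.
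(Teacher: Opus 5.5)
Your outline matches the paper's architecture: orbifold BMY equality, Enoki's relation on each branch curve (your $(K_{\PP^2}+D)\cdot D_i=-\frac32\,e_{\mathrm{orb}}(D_i)$, once made precise, is equivalent to $\prop_D(D_i)=0$), ampleness of $K_{\PP^2}+D$, and the uniqueness of the two arrangements. The ``if'' direction is fine. The ``only if'' direction, however, has genuine gaps.

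The one step that actually fails is Melchior's inequality. The bound $t_2\ge 3+t_4+\cdots$ holds only for \emph{real} arrangements. The dual Hesse arrangement is not realizable over $\RR$ and has $t_2=0$, $t_3=12$. So imposing $t_2\ge3$ would eliminate $D_H$, which contradicts your own claimed output $(9,0,12)$. No such inequality is needed. Suppose all weights are $2$, and a line has $q_2$ double and $q_3$ triple points. Then $q_2+2q_3=k-1$ and $\prop_D(D_i)=-1+\frac12q_2+\frac14q_3$, so vanishing gives $k\in\{3,6,9\}$. Ampleness needs $k\ge7$, hence $k=9$ with $[\mathbf 2,2,2]^4$ on every line, i.e.\ $t_2=0$, $t_3=12$.

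The steps you flag as the main obstacle are left as expectations, and they carry the real content.

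For the degree reduction, ``summing against the global equality'' works only because of an exact cancellation. Form $\Prop(\PP^2,D)-\frac12\sum_i(1-\frac1{b_i})\prop_D(D_i)$. In it, the point contribution $\theta_x=\lambda_x-\frac12\sum_{i:x\in D_i}(1-\frac1{b_i})\alpha_{x,D_i}$ vanishes at every $[a,b]$ and $[a,b,c]$ point. What remains is $\frac32\sum_i(1-\frac1{b_i})d_i(d_i-1)$, which is positive unless every $d_i=1$. Without this cancellation the local terms have no controlled sign. Your heuristic is also not the mechanism: a conic has the same Euler number as a line, and smooth curves of degree $\ge3$ have $e\le0$. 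What actually rules them out is that $3D_i^2=3d_i^2$ exceeds $-K_{\PP^2}\cdot D_i=3d_i$.

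For the unbounded weights you give no argument. The paper splits into two cases.
\begin{itemize}
\item For $k\le4$, ampleness forces $k=4$ and $\sum_i 1/b_i<1$. This excludes triple points, since these need $\sum 1/b>1$ over three of the lines. Then one shows $\Prop(\PP^2,D)>0$ for four normal-crossing lines.
\item For $k\ge5$, it bounds $\alpha\ge\frac12$ at double points and $\alpha\ge\hat\alpha_3(b_1)$ at triple points in $-2+\frac2{b_1}+\sum_x\alpha_{x,D_1}=0$. This forces $b_1\in\{2,3\}$, with weight $3$ occurring only in the configurations $[2,\mathbf 3]\,[2,\mathbf 3,3]^2$ ($k=6$, leading to $D_Q$) and $[2,2,\mathbf 3]\,[2,\mathbf 3,3]^2$ ($k=7$, which is contradictory).
\end{itemize}
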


This paper provides a direct proof that $D_Q$ and $D_H$ are ball quotient divisors (\cref{P2:DQ-DH-are-ball-quotients}) using a deep uniformization result of Kobayashi--Nakamura--Sakai. This part of \cref{main-theorem} is certainly not new: In their paper \cite{deligne-mostow-1986-mhf}, Deligne--Mostow showed that $D_Q$ is a ball quotient divisor, and it was known to Thomas H\"ofer \cite{hofer-1985-bav} that both are ball quotient divisors. That $D_Q$ and $D_H$ are the \emph{only} line arrangements (or smooth pairwise normal-crossing arrangements) that arise as ball quotient divisors appears to be new.

\begin{remarks}[``Pairwise normal-crossing'']
\item
    The hypothesis that the components of $D$ are smooth and pairwise normal-crossing is strictly weaker than assuming that $D$ is a smooth normal-crossing divisor. Neither $D_Q$ nor $D_H$ is normal-crossing but both are pairwise normal-crossing since any pair of lines on $\PP^2$, viewed in isolation from all the other lines, meet in a normal crossing. Indeed, a consequence of \cref{main-theorem} is that no smooth normal-crossing divisor $D$ on $\PP^2$ is a ball quotient divisor.
\item \begin{minipage}[t]{0.725\linewidth}
    There exist ball quotient divisors on $\PP^2$ whose components are smooth but not pairwise normal-crossing. Uluda\u g showed \cite[Proposition 1(iii)]{uludag-2004-crb} that the following weighted arrangement (pictured in \cref{fig:uludag}) is a ball quotient divisor: a conic with weight 3, and three distinct lines tangent to the conic with weights 3, 4, and 4. This arrangement has singularities
    \begin{gather*}
        [3,4]^2\, [4,4]\, [3(2)3]\, [3(2)4]^2
    \end{gather*}
    (see \cref{reflection-group-singularities} and \cref{singularity-notation} for notation). The methods of this paper fail to account for orbifold divisors with singularities of type $[a(m)b]$.
    \end{minipage}
    \hfill
    \begin{minipage}[t]{0.24\linewidth}
        \vspace{-3.5ex} 
        \img[1]{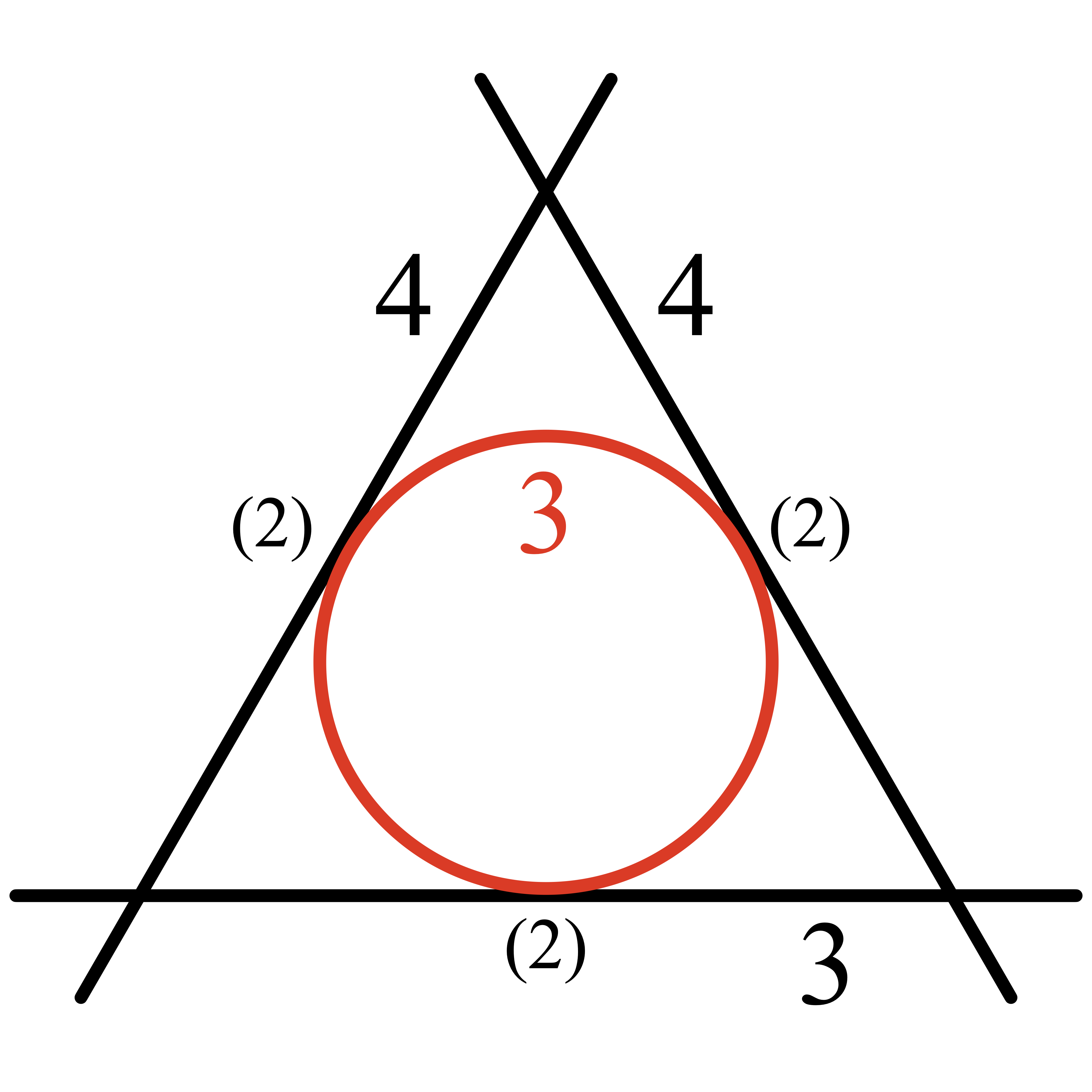}
        \vspace{-4.5ex} 
        \captionsetup{width=0.97\textwidth}
        \captionof{figure}{A ball quotient divisor on $\PP^2$ \emph{not} within the scope of  \cref{main-theorem}.\label{fig:uludag}}
    \end{minipage}
\end{remarks}
\vspace{-1.2ex} 

\paragraph{Methods.}
The main tools used to rule out orbifold divisors are Hirzebruch proportionality (\cref{hirzebruch-proportionality}) and Enoki's version of proportionality for curves (\cref{relative-proportionality}), which state that compact ball quotients satisfy strict numerical conditions. These conditions motivate the introduction of certain numerical invariants, defined for all compact smooth orbifolds $(X,D)$, that vanish for ball quotient orbifolds (see \cref{orbifold-prop-definitions} and \cref{constraints}).

Then the idea is simple in theory: Derive explicit formulas for the orbifold invariants, compute invariants for all possible orbifold divisors, and check for vanishing. In practice, the full class of orbifold divisors on a surface---even one as simple as $\PP^2$---is very large and difficult to handle in its entirety. A major hurdle is that the invariants depend on the configuration of singularities, so linear equivalence is not easily exploited.

The strategy in this paper is to split each invariant into (1) the part that is preserved by linear equivalence of $D$, and (2) the part that depends on the singularities on $D$. It turns out that this second part can always be expressed as the total contribution from all individual singular points on $D$, where the contribution from a singular point $x \in D_\sing$ is a number depending only on the local picture near $x$.

The theory of complex reflection groups plays a key role in the study of such singularities: By a theorem of Chevalley, the local group associated to any singularity of the orbifold divisor of a smooth orbifold is a complex reflection group. Thus the Shephard--Todd classification gives a complete classification of possible singularities on an orbifold divisor on a smooth surface (\cref{reflection-group-singularities}).

To verify that the examples $D_Q$ and $D_H$ are indeed ball quotient divisors, the main tool is the orbifold version of Yau's uniformization theorem, due to Kobayashi--Nakamura--Sakai (\cref{ball-quotients:orbifold}).

\paragraph{Historical context.}
Hirzebruch had the following idea for producing ball quotients systematically by branched covering constructions: Take a line arrangement on $\PP^2$, blow up every point with multiplicity $\geq 3$. The strict transform of the line arrangement, combined with all exceptional divisors, is a normal-crossing divisor on a blow-up of $\PP^2$, to which weights may be assigned. Then construct an abelian cover of this surface and compute the Chern numbers of the cover. Hirzebruch's original paper \cite{hirzebruch-1983-ala} found ball quotient divisors on various blow-ups of $\PP^2$.

Subsequent papers of Hirzebruch---and later, his students H\"ofer and Tretkoff---pushed this idea further, producing many examples of ball quotient divisors on blow-ups of $\PP^2$ \cite{hirzebruch-1985-ase,hofer-1985-bav,barthel-ea-1987-gua,tretkoff-2016-cbq} as well as blow-ups of abelian surfaces \cite{hirzebruch-1984-cna}. Stover \cite{stover-2023-pcb} later produced ball quotient divisors on blow-ups of products of curves with the same approach.

It should be noted that the ball quotient orbifolds studied by Hirzebruch are generally non-minimal and have normal-crossing ball quotient divisors, whereas the present paper is focused on ball quotient structures on $\PP^2$ \emph{without blowing up any points}.

\paragraph{Future directions.}
Using the computational framework set up in \cref{sec:computational-framework}, results similar to \cref{main-theorem} can be obtained for compact surfaces other than $\PP^2$. This will be addressed in a sequel. A complete classification of ball quotient structures on $\PP^2$ (without the smooth and pairwise normal-crossing hypothesis) would be satisfying, but would likely require new insights to handle other singularities.

\paragraph{Structure of paper.} \cref{sec:background} fixes notation and reviews facts about finite group actions, orbifolds, and complex reflection groups. In \cref{sec:computational-framework}, ball quotients are introduced and the main computational framework (\cref{ball-quotients:orbifold-ample,constraints,ball-quotients:orbifold}) is set up. \cref{sec:main-theorem-proof} is devoted to the proof of \cref{main-theorem}.

\paragraph{Acknowledgements.} I thank my advisor Benson Farb for many useful mathematical conversations, for comments on earlier drafts of this paper, and for suggesting this project in the first place. I am grateful for his endless patience and support. I would also like to thank Eduard Looijenga, Matthew Stover, Zhiwei Zheng, and Gregorio Baldi, all of whom shared valuable insights.

\tableofcontents

\section{Background}
\label{sec:background}

This section serves to collect standard background material and to fix notation and terminology.
\vspace{-0.5ex} 

\paragraph{Conventions.} All varieties are over $\CC$. The preferred category is that of (normal) analytic varieties---the notation $\Aut X$ is reserved for the biholomorphism group of $X$. In this paper, a \emph{surface} is a 2-dimensional quasi-projective algebraic variety over $\CC$.

\paragraph{Weighted arrangements, weighted singularities.}
Let $X$ be a normal variety. A \emph{weighted $\QQ$-divisor} on $X$ means a $\QQ$-divisor of the form
\begin{gather*}
    D = \sum_i \l( 1 - \frac{1}{b_i} \r) D_i,
    \qquad b_i \in \ZZ_{\geq 2}.
\end{gather*}
\vspace{-2.7ex} 

Such a divisor should be viewed as a weighted arrangement of irreducible subvarieties $\{D_i\}_i$ where the \emph{weight} of a component $D_i$ is the integer $b_i$.
\begin{itemize}
\item
A (\emph{weighted}) \emph{singularity} is a singular point on a weighted $\QQ$-divisor $D$, considered up to holomorphic local coordinate change preserving the weights of the components of $D$ meeting the singularity.
\item
A \emph{singularity pair} is a pair $(x,C)$ where $x$ is a weighted singularity and $C$ is a (possibly reducible) codimension 1 germ at $x$, considered up to holomorphic local coordinate change preserving weights and preserving the distinguished germ.
\end{itemize}

The primary examples of weighted $\QQ$-divisors in the context of this paper are branch divisors (of quotient maps) and orbifold divisors, where the weights are given by the branch indices. In both cases the terms \emph{weight} and \emph{branch index} will be used interchangeably.

\subsection{Quotient spaces, ramification and branching}
In preparation for working with orbifolds, this subsection fixes terminology related to quotient maps and branch data, and collects several lemmas.

Let $Y$ be a complex manifold and $\Gamma$ a group of biholomorphisms acting properly discontinuously with quotient space $X \defeq Y / \Gamma$ and let $\pi: Y \to X$ denote the quotient map. If $\Gamma$ does not act freely, then the quotient space comes equipped with branching data that is often useful to remember.

\begin{itemize}[itemsep=0pt,parsep=7pt,topsep=0pt] 
\item
    The \emph{ramification locus} of $\pi$ is the set
    \begin{gather*}
        R \defeq \{y \in Y : \stab_\Gamma(y) \neq 1\} \subset Y.
    \end{gather*}
    The ramification locus is a union of complex submanifolds (see \cref{local-model-reflection-group}). If $R_j$ is an irreducible codimension 1 component of the ramification locus, then the pointwise stabilizer $\Gamma_{R_j}$ is cyclic and the \emph{ramification index} of $R_j$ is defined to be $k_j \defeq |\Gamma_{R_j}|$.

\item
    For $x \in X$, the \emph{(quotient) local group} at $x$ is (the abstract isomorphism class of)
    \begin{gather*}
        \Gamma_x \defeq \stab_\Gamma(\tilde x)
        \quad\text{for any $\tilde x \in \pi^{-1}(x) \subset Y$}.
    \end{gather*}
    This defines a conjugacy class of subgroups of $\Gamma$.

\item
    The \emph{branch locus} of $\pi$ is the set
    \begin{gather*}
        B \defeq \{x \in X : \Gamma_x \neq 1\} = \pi(R) \subset X.
    \end{gather*}
    The irreducible components of the branch locus are analytic subvarieties of $X$.

\item
    The \emph{branch divisor} of $\pi$ is the weighted $\QQ$-divisor
    \begin{gather*}
        D \defeq \sum_i \l( 1 - \frac{1}{b_i} \r) D_i \ \in \Div_\QQ X
    \end{gather*}
    \vspace{-3ex} 

    where $\{D_i\}_i$ are the irreducible codimension 1 components of the branch locus and the \emph{branch index} $b_i$ of an irreducible component $D_i$ is
    \begin{alignat*}{2}
        b_i &\defeq k_j \quad &&\text{for any $R_j$ such that $\pi(R_j) = D_i$} \\
        &= |\Gamma_x| \quad &&\text{for any $x \in D_i - D_\sing$}.
    \end{alignat*}
\end{itemize}

The branch divisor has the following useful property:

\begin{lemma}[e.g. {\cite[Section I.16]{barth-ea-1984-ccs}}] 
\label{lift-canonical}
    Let $Y$ be a complex manifold and $G < \Aut Y$ a finite group with quotient $X \defeq Y / G$. Let $D$ denote the branch divisor of the quotient map $\pi: Y \to X$. The canonical divisors of $X$ and $Y$ are related by
    \begin{gather*}
        K_Y = \pi^*(K_X + D).
    \end{gather*}
\end{lemma}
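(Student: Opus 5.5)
The plan is to compare canonical divisors on the locus where $\pi$ is unramified or simply ramified, and then extend across a set of codimension at least $2$. Since $Y$ is smooth, $K_Y$ is Cartier; since $X = Y/G$ is normal, $K_X$ is a Weil divisor class, and $\pi^*$ of a $\QQ$-Weil divisor is defined by pulling back over the smooth locus of $X$ and taking closure. Both sides of the claimed identity are therefore determined by their restrictions to the complement of a codimension-$2$ subset of $Y$. Let $Z \subset Y$ be the union of the codimension $\geq 2$ components of the ramification locus $R$ together with the pairwise intersections and singular points of the codimension-$1$ components. Then $Z$ is $G$-invariant, $\pi(Z)$ has codimension $\geq 2$ in $X$, and it suffices to prove $K_{Y\setminus Z} = \pi^*(K_{X\setminus \pi(Z)} + D)$.

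The next step is to choose local coordinates. Take $y \in Y \setminus Z$ with $x = \pi(y)$. If $\stab_G(y) = 1$, then $\pi$ is a local biholomorphism near $y$, $x \notin \operatorname{Supp} D$, and the identity holds locally. Otherwise $y$ lies on a unique codimension-$1$ component $R_j$, and $\stab_G(y)$ is the cyclic pointwise stabilizer $\Gamma_{R_j}$ of order $k_j = b_i$. By Cartan linearization (the local model in \cref{local-model-reflection-group}), there are coordinates $(z_1,\dots,z_n)$ near $y$ in which a generator acts by $z_1 \mapsto \zeta z_1$ with $\zeta$ a primitive $b_i$-th root of unity, fixing the other coordinates. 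The quotient is then smooth near $x$ with coordinates $w_1 = z_1^{b_i}$ and $w_k = z_k$ for $k \ge 2$, and $D_i = \{w_1 = 0\}$ locally.

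With these coordinates, the computation is direct:
\begin{gather*}
\pi^*(dw_1\wedge\cdots\wedge dw_n) = b_i\, z_1^{b_i-1}\, dz_1\wedge\cdots\wedge dz_n .
\end{gather*}
Hence $K_Y = \pi^*K_X + (b_i - 1) R_j$ locally. On the other hand, $\pi^* D_i = b_i R_j$, since $w_1$ pulls back to $z_1^{b_i}$. This gives $\pi^*\big((1 - \tfrac{1}{b_i}) D_i\big) = (b_i - 1) R_j$, which matches. Globally, $\pi^{-1}(D_i)$ is the union of the $G$-translates of $R_j$, and each of them has the same ramification index $b_i$. Summing over $i$ therefore yields the identity on $Y \setminus Z$, and it extends to all of $Y$ by the codimension-$2$ argument above.

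The main point requiring care is the first step. One has to check that removing $Z$ is legitimate: $X$ may be singular, so $K_X$ is only a Weil divisor class, and the identity should be read as an equality of ($\QQ$-)Weil divisor classes defined via reflexive pullback. One also has to check that codimension-$2$ loci affect neither side. The local computation itself is routine once linearization is available.
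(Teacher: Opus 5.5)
Your argument is the standard Hurwitz-formula computation, and the paper gives no proof of its own here, only the citation to Barth--Peters--Van de Ven, whose argument is this same one. The computation runs as follows: the Jacobian $b_i z_1^{b_i-1}$ along each mirror gives $K_Y=\pi^*K_X+\sum_j (k_j-1)R_j$, and $\pi^*D_i=b_i\sum_g gR_j$. So the approaches agree, and the proof is correct apart from one slip in the choice of $Z$.

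As you defined $Z$, it does not guarantee $\mathrm{Stab}_G(y)=\Gamma_{R_j}$ for $y\in R_j\setminus Z$. A lower-dimensional fixed component of some $g\notin\Gamma_{R_j}$ can lie inside $R_j$ without being an irreducible component of $R$. For example, let $\ZZ/4$ act on $\CC^2$ by $t(z_1,z_2)=(iz_1,-z_2)$:
\begin{itemize}
\item $R=\{z_1=0\}$ is the mirror of $t^2$;
\item the origin is an isolated fixed point of $t$ and $t^3$ lying on $R$, so it is not in your $Z$;
\item yet its stabilizer is all of $\ZZ/4$, and the quotient has an $A_1$ singularity there.
\end{itemize}
So your local model and the claimed smoothness of $X$ off $\pi(Z)$ fail at such points.

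The fix is to add $\bigcup_j\bigcup_{g\notin\Gamma_{R_j}}\bigl(\mathrm{Fix}(g)\cap R_j\bigr)$ to $Z$. This set is $G$-invariant and is a proper analytic subset of the mirrors, hence has codimension $\geq 2$ in $Y$. Equivalently, you only need to compare multiplicities at a general point of each $R_j$. With that change the rest goes through as written.

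When $X$ is smooth, as in all of the paper's applications, the slip is harmless. Chevalley's theorem then forces $\mathrm{Stab}_G(y)$ to be generated by reflections in the single mirror through $y$, so it equals $\Gamma_{R_j}$.
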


The following lemma is originally due to Cartan:

\begin{lemma}
\label{finite-group-action-linearization}
    Let $Y$ be a complex manifold and let $G < \Aut Y$ be a finite group acting on $Y$. If $y$ is a global fixed point of $G$ then there exists an open neighbourhood $V$ of $y$ preserved by $G$ and local coordinates on $V$ centred at $y$ with respect to which $G$ acts by complex-linear automorphisms. 
\end{lemma}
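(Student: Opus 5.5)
The plan is the classical averaging argument (Cartan's linearization). Since $y$ is a global fixed point, every $g \in G$ fixes $y$. First I choose holomorphic coordinates $\phi: U \to \CC^n$ centred at $y$, so $\phi(y) = 0$, on some open neighbourhood $U$ of $y$. Since $G$ is finite and fixes $y$, the set $U' \defeq \bigcap_{g\in G} g(U)$ is an open neighbourhood of $y$ preserved by $G$. Each $g\in G$ has a differential $dg_y \in \GL(T_yY)$, and $g \mapsto dg_y$ is a homomorphism $\rho: G \to \GL(T_yY) \iso \GL_n(\CC)$. Here we identify $T_yY$ with $\CC^n$ via $d\phi_y$.

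Next I define the averaged map
\begin{gather*}
    \psi \defeq \frac{1}{|G|} \sum_{g\in G} \rho(g)^{-1} \circ \phi \circ g \;:\; U' \to \CC^n .
\end{gather*}
This map is holomorphic, satisfies $\psi(y)=0$, and has $d\psi_y = \frac{1}{|G|}\sum_g \rho(g)^{-1}\rho(g)\, d\phi_y = d\phi_y$, which is invertible. The key identity is equivariance. For $h\in G$, reindexing the sum by $g' = gh$ gives
\begin{gather*}
    \psi\circ h = \frac{1}{|G|}\sum_{g} \rho(g)^{-1}\phi\circ gh = \rho(h)\,\frac{1}{|G|}\sum_{g'}\rho(g')^{-1}\phi\circ g' = \rho(h)\circ\psi,
\end{gather*}
using $\rho(g)^{-1} = \rho(h)\rho(gh)^{-1}$.

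By the inverse function theorem, $\psi$ restricts to a biholomorphism from some neighbourhood $W \subset U'$ of $y$ onto an open set in $\CC^n$. The remaining task, and the only mild obstacle, is to make this neighbourhood $G$-invariant. I take $V \defeq \bigcap_{g\in G} g(W)$. This set is open, contains $y$, and is $G$-invariant because $G$ is a group. Since $V \subset W$, the restriction $\psi|_V$ is still a chart centred at $y$. In this chart the equivariance identity reads $\psi(h\cdot v) = \rho(h)\psi(v)$ for all $v\in V$ and $h \in G$, so $G$ acts by the complex-linear maps $\rho(h)$, as required.

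Everything is routine. The only points needing care are that $\psi$ is defined on a $G$-invariant domain, which is why I use $U'$, and that shrinking to $V$ preserves both invertibility and invariance. Finiteness of $G$ is used twice: for the averaging to make sense, and for the intersections defining $U'$ and $V$ to be open.
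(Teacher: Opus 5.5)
Your proposal is correct and is essentially the paper's proof. It uses the same Cartan averaging map $\frac{1}{|G|}\sum_g (dg_y)^{-1}\circ\phi\circ g$ with derivative the identity, the same reindexing to get $\psi\circ h = dh_y\circ\psi$, and the same inverse function theorem followed by a finite intersection $\bigcap_g g(\cdot)$ to get a $G$-invariant chart. The only difference is cosmetic: you work directly on $Y$, while the paper first transports everything into an open subset of $\CC^n$ via the chart.
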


\begin{proof}
    Let $V_0$ be an open neighbourhood of $y$ with a coordinate map
    \begin{gather*}
        \varphi: V_0 \xlongrightarrow\iso U_0 \overset{\text{open}}{\subseteq} \CC^n
    \end{gather*}
    where $\varphi(y) = 0$. Assume that $V_0$ is $G$-invariant, after replacing $V_0$ with the finite intersection $\bigcap_{g\in G} gV_0$ (this is non-empty because every element of $G$ fixes $y$). For each $g \in G$, view $g$ as a biholomorphism of $U_0 \subseteq \CC^n$ and view the differential map $d_0g: T_0 \CC^n \to T_0 \CC^n$ as a linear automorphism of $\CC^n$ after making the canonical identification $T_0 \CC^n = \CC^n$. Define the holomorphic map
    \begin{gather*}
        \alpha \defeq \frac{1}{|G|} \sum_{g \in G} (d_0g)^{-1} g : U_0 \to \CC^n.
    \end{gather*}
    Observe that $d_0\alpha : T_0 \CC^n \to T_0 \CC^n$ is the identity map so $\alpha$ is injective on some neighbourhood $U$ of $0$. By replacing $U$ with the finite intersection $\bigcap_{g \in G} gU$, it can be assumed that $U$ is $G$-invariant. Let $V = \varphi^{-1}(U)$ be given the coordinate map
    \begin{gather*}
        \alpha\varphi: V \xlongrightarrow\iso \alpha(U) \subset \CC^n.
    \end{gather*}

    For $h \in G$, the action of $h$ on $\alpha(U)$ is given by the biholomorphism $\alpha h \alpha^{-1}$. By definition of $\alpha$,
    \begin{gather*}
        d_0 h \alpha
        = \frac{1}{|G|} \sum_{g \in G} d_0h(d_0g)^{-1} g
        = \frac{1}{|G|} \sum_{g \in G} d_0(hg^{-1}) g
        = \frac{1}{|G|} \sum_{g \in G} d_0g^{-1} gh = \alpha h.
    \end{gather*}
    Therefore $\alpha h \alpha^{-1} = d_0 h$ is linear.
\end{proof}

\begin{lemma}
\label{local-model-reflection-group}
Let $Y$ be a complex manifold and let $\Gamma < \Aut Y$ act properly discontinuously with quotient space $X \defeq Y / \Gamma$. For each $y \in Y$, the stabilizer $\stab_\Gamma(y)$ is finite and there exists a coordinate chart $V \subset Y$ of $y$ preserved by $\stab_\Gamma(y)$ such that
\begin{enumerate}[label={\normalfont(\arabic*)}]
\item
the action of $\stab_\Gamma(y)$ with respect to the local coordinates is by complex-linear automorphisms; and

\item
the restriction
\begin{gather*}
    \pi|_V: V \to \pi(V)
\end{gather*}
is the quotient of $V$ by the action of $\stab_\Gamma(y)$.
\end{enumerate}
\end{lemma}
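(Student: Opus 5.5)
The plan is to prove finiteness of the stabilizer from proper discontinuity, and then to obtain both (1) and (2) by applying \cref{finite-group-action-linearization} on a neighbourhood chosen small enough that the stabilizer is the only part of $\Gamma$ visible there.

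\emph{Finiteness.} Take a compact neighbourhood $K$ of $y$. Proper discontinuity means that only finitely many $\gamma \in \Gamma$ satisfy $\gamma K \cap K \neq \emptyset$. Every element of $\stab_\Gamma(y)$ has this property, so $G \defeq \stab_\Gamma(y)$ is finite.

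\emph{Separating neighbourhood.} Let $\gamma_1, \dots, \gamma_m$ be the finitely many elements of $\Gamma - G$ with $\gamma_i K \cap K \neq \emptyset$. For each $i$ we have $\gamma_i y \neq y$. Since $Y$ is Hausdorff, we can choose a neighbourhood $W \subset K$ of $y$ with $\gamma_i W \cap W = \emptyset$ for all $i$. To do this, pick disjoint neighbourhoods $A_i \ni y$ and $B_i \ni \gamma_i y$, and intersect the sets $A_i \cap \gamma_i^{-1} B_i$. Any $\gamma \notin G$ with $\gamma K \cap K = \emptyset$ already moves $W$ off itself. Hence every $\gamma \in \Gamma - G$ satisfies $\gamma W \cap W = \emptyset$.

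\emph{Linearization.} Replace $W$ by $\bigcap_{g \in G} gW$, which is still an open neighbourhood of $y$ because $G$ is finite and fixes $y$. This set is $G$-invariant, and the separation property persists. Now $y$ is a global fixed point of the finite group $G$ acting on the complex manifold $W$. By \cref{finite-group-action-linearization}, there is a $G$-invariant open $V \subset W$ with coordinates centred at $y$ in which $G$ acts complex-linearly. This gives (1). We may shrink $V$ to a $G$-invariant coordinate ball, since a ball is invariant under a finite linear group after choosing a $G$-invariant Hermitian metric. This keeps the chart connected, though connectedness is not strictly needed.

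\emph{Quotient.} For (2), take $v, v' \in V$ with $\pi(v) = \pi(v')$. Then $v' = \gamma v$ for some $\gamma \in \Gamma$, so $\gamma V \cap V \neq \emptyset$, and therefore $\gamma \in G$. Thus the fibres of $\pi|_V$ are exactly the $G$-orbits, and $\pi|_V$ factors through a continuous bijection $V/G \to \pi(V)$. The set $\pi(V)$ is open because $\pi$ is an open map: $\pi^{-1}(\pi(U)) = \bigcup_\gamma \gamma U$ for any open $U$. The induced map $V/G \to \pi(V)$ is open by the same reasoning, so it is a homeomorphism. Since the analytic structures on both quotients are defined by invariant functions, it is in fact an isomorphism of the quotient structures, which proves (2).

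The main obstacle is the separating-neighbourhood step: showing that all elements outside the stabilizer can be pushed off a single neighbourhood simultaneously. This is exactly where proper discontinuity, in its finiteness form, is used. The rest reduces to \cref{finite-group-action-linearization}.
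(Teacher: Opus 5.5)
Your proof is correct and follows essentially the same route as the paper. Both arguments get finiteness of the stabilizer from proper discontinuity, use Hausdorff separation to push the finitely many non-stabilizing elements off a single neighbourhood, intersect over the $G$-translates to get invariance, and invoke \cref{finite-group-action-linearization} for linearity. The differences are cosmetic: you separate before linearizing, while the paper linearizes first and then shrinks, and you spell out the homeomorphism $V/G \to \pi(V)$ in part (2), which the paper leaves implicit.
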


\begin{proof}
    Since $\Gamma$ acts properly discontinuously, any point $y \in Y$ admits an open neighbourhood $V$ such that
    \begin{gather*}
        S \defeq \{g \in \Gamma : gV \cap V \neq \emptyset\} \text{ is finite.}
    \end{gather*}
    Then $\stab_\Gamma(y)$ is finite since $\stab_\Gamma(y) \subseteq S$. Assume, by intersecting with an appropriate chart given by \cref{finite-group-action-linearization}, that $V$ admits a local coordinate system centred at $y$ with respect to which the action of $\stab_\Gamma(y)$ is complex-linear.

    If $g \in S$ but $g \cdot y \neq y$ then there exist disjoint open sets $A$ and $B$ containing $y$ and $g \cdot y$ respectively since $Y$ is Hausdorff. The open set
    \begin{gather*}
        V' \defeq A \cap g^{-1}(B) \cap V
    \end{gather*}
    contains $y$ and $g V' \cap V' = \emptyset$ because $gV' \subset B$ and $V' \subset A$. By replacing $V$ with $V'$ in this manner finitely many times, it can be assumed that $S = \stab_\Gamma(y)$. Finally, to ensure that $V$ is preserved by $\stab_\Gamma(y)$, replace $V$ with the open subset
    \begin{gather*}
        \bigcap_{g \in \stab_\Gamma(y)} gV.
    \end{gather*}
    Then $V$ satisfies
    \begin{gather*}
    \begin{cases}
        gV = V & g \in \stab_\Gamma(y) \\
        gV \cap V = \emptyset & g \notin \stab_\Gamma(y)
    \end{cases}
    \end{gather*}
    so the restriction $\pi|_V: V \to \pi(V)$ is the quotient map of $V$ by $\stab_\Gamma(y)$.
\end{proof}

\subsection{Orbifolds}
\label{sec:orbifolds}
The notion of an orbifold is meant to describe a space locally modelled by finite quotients of $\RR^n$. The language of orbifolds is not at all standardized; the aim of this subsection is to precisely define the orbifold vocabulary required for this paper, with minimal technical detail. Refer to e.g. \cite{caramello-2022-io} for a rigorous and far more general treatment of the algebraic topology of orbifolds. Aspects of orbifold theory specific to the uniformization of complex algebraic surfaces may be found in \cite{kobayashi-1990-ucs} or \cite{uludag-2007-otu}. 

Let $X$ be a normal complex variety. A \emph{complex orbifold structure} on $X$ is a maximal open cover by charts with biholomorphisms to finite quotients of $\CC^n$ subject to certain compatibility conditions on the chart group actions (the precise definition may be found in \cite{caramello-2022-io}).
\begin{itemize}
\item
    The \emph{orbifold local group} $\Gamma_x$ at a point $x$ is the quotient local group of $x$ in any chart. As an abstract group, this does not depend on the chart.
\item
    The \emph{orbifold locus} is the set
    \begin{gather*}
        \{x \in X : \Gamma_x \neq 1 \} \subset X,
    \end{gather*}
    or equivalently the union of the branch loci across all of the charts.
\item
    The \emph{orbifold divisor} (of the orbifold structure) is the weighted $\QQ$-divisor
    \begin{gather*}
        D \defeq \sum_i \l( 1 - \frac{1}{b_i} \r) D_i \ \in \Div_\QQ X
    \end{gather*}
    where $\{D_i\}_i$ are the codimension 1 components of the orbifold locus, and the branch index $b_i$ of a component $D_i$ is the branch index of $D_i$ in any chart intersecting $D_i$ non-trivially. This does not depend on the choice of chart.
\end{itemize}
The primary example of a complex orbifold is a quotient $Y / \Gamma$ where $Y$ is a smooth surface and $\Gamma$ is a group acting properly discontinuously on $Y$. An orbifold structure on $X$ arising from a quotient map is called \emph{uniformizable} (or \emph{good}), and the quotient map $Y \to X$ (or sometimes just the manifold $Y$) is called a \emph{uniformization} of the orbifold.
\medskip

In this paper, all orbifolds will have underlying space a smooth surface $X$. In this setting, it is convenient to use the following equivalent definitions:
\begin{itemize}
\item
    An \emph{orbifold divisor} on a smooth surface $X$ is a weighted $\QQ$-divisor having only reflection group singularities (defined in \cref{sec:reflection-groups}). Such a divisor $D$ on $X$ specifies a unique orbifold structure on $X$ whose orbifold divisor is $D$ because the orbifold local group and local chart of a singular point can be recovered from the weighted singularity type (see \cref{reflection-group-singularities}).
\item
    An \emph{orbifold} (\emph{surface}) is a pair $(X,D)$ where $X$ is a smooth surface and $D$ is an orbifold divisor. An orbifold will always be denoted as a pair $(X,D)$ (as opposed to $X$, which will be reserved for the underlying manifold).
\end{itemize}

\emph{Shorthand.}\enspace
Specifically in this paper, the term \emph{orbifold} implicitly means an orbifold surface (i.e. all orbifolds are assumed to have underlying space a smooth surface). Adjectives of manifolds (e.g. \emph{smooth}, \emph{compact}) applied to an orbifold $(X,D)$ are understood to describe the underlying surface $X$.

\begin{remark}
\label{covering-ball-quotient}
    There is a notion of an ``orbifold covering map'' (see \cite[Section 2.3]{caramello-2022-io}), generalizing both an unbranched covering map and a uniformization map, with the property that any orbifold cover of a ball quotient orbifold is again a ball quotient orbifold. This motivates the construction of $D_H$ from the introduction, since $f_3: (\PP^2, D_H) \to (\PP^2, D_Q)$ indeed defines a normal orbifold cover, and the fact that $(\PP^2, D_H)$ is a ball quotient orbifold can be deduced from Deligne--Mostow's proof that $(\PP^2, D_Q)$ is a ball quotient orbifold. A consequence is that the lattices $\Gamma_H$ and $\Gamma_Q$ associated to the ball quotient orbifolds $(\PP^2, D_H)$ and $(\PP^2,D_Q)$ respectively are commensurable (in fact, $\Gamma_H$ is a finite-index subgroup of $\Gamma_Q$). Since $\Gamma_Q$ is arithmetic \cite[p. 86]{deligne-mostow-1986-mhf}, it follows that $\Gamma_H$ is arithmetic. The definition of ``orbifold covering map'' will not be used in this paper outside of this remark.
\end{remark}

\subsection{Complex reflection groups, reflection group singularities}
\label{sec:reflection-groups}

The objective of this subsection is to define \emph{reflection group singularities}, explain their relevance to the study of smooth orbifolds, and give a complete classification of reflection group singularities in dimension 2 (summarized in \cref{reflection-group-singularities}). For detailed references, see the notes following \cref{reflection-group-singularities}.
\medskip

A \emph{complex reflection} (or \emph{pseudo-reflection}) is a finite-order linear automorphism of $\CC^n$ whose fixed set is a codimension 1 subspace. A \emph{complex reflection group} is a subgroup of $\GL_n(\CC)$ generated by reflections.

By a theorem of Chevalley, the quotient of $\CC^n$ by the action of a finite complex reflection group $G$ is biregular to $\CC^n$. The branch divisor of the quotient map $\CC^n \to \CC^n / G \iso \CC^n$ is singular at the origin unless $G$ is generated by a single reflection. A weighted singularity arising as the origin singularity for some reflection group is called a \emph{reflection group singularity}.

It turns out that finite complex reflection groups are characterized by having quotient biregular to $\CC^n$.

\begin{theorem}[Chevalley \cite{chevalley-1955-ifg}, Shephard--Todd \cite{shephard-todd-1954-fur}]
\label{chevalley-smooth-quotient}
A finite group $G \subset \GL_n(\CC)$ is a reflection group if and only if the quotient space $\CC^n / G$ is isomorphic to $\CC^n$ as an affine algebraic variety.
\end{theorem}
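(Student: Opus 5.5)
The plan is to prove the two directions separately. Throughout, $G\subset\GL_n(\CC)$ is finite and $\CC[x_1,\dots,x_n]^G$ is its invariant ring. Since $\CC^n/G=\operatorname{Spec}\CC[x_1,\dots,x_n]^G$, the statement is equivalent to saying that $G$ is generated by reflections if and only if $\CC[x_1,\dots,x_n]^G$ is a polynomial ring.

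\emph{($\Rightarrow$, Chevalley.)} Suppose $G$ is generated by reflections. Let $I\subset S\defeq\CC[x_1,\dots,x_n]$ be the ideal generated by the homogeneous invariants of positive degree. Choose homogeneous invariants $f_1,\dots,f_r$ forming a minimal generating set of $I$; by the Hilbert basis argument they also generate $S^G$ as an algebra. It remains to show they are algebraically independent.

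The key lemma is the following: if $g_1,\dots,g_m\in S^G$ with $g_1\notin(g_2,\dots,g_m)S^G$, and $\sum p_i g_i=0$ with $p_i\in S$ homogeneous, then $p_1\in I$. Its proof is by induction on $\deg p_1$ and uses reflections. For a reflection $s$ with fixed hyperplane $\{\ell=0\}$, the difference $p-s(p)$ is divisible by $\ell$. Applying $(1-s)/\ell$ to the relation lowers degrees, and the induction gives $p_1-s(p_1)\in I$ for each reflection $s$. Because the reflections generate $G$, it follows that $p_1-g(p_1)\in I$ for every $g\in G$. Averaging with the Reynolds operator then yields $p_1\in I$.

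Now suppose there is a minimal-degree polynomial relation $h(f_1,\dots,f_r)=0$. Differentiating gives $\sum_i (\partial_i h)(f)\,\partial f_i/\partial x_k=0$ for each $k$. Apply the lemma to the invariants $g_i=(\partial_i h)(f)$, reordered so that a minimal subset generates the ideal they span. Then use Euler's identity $\sum_k x_k\,\partial f_i/\partial x_k=\deg(f_i)f_i$ to deduce that some $f_i$ lies in the ideal generated by the others. This contradicts minimality. Hence $S^G$ is a polynomial ring and $\CC^n/G\iso\CC^n$.

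\emph{($\Leftarrow$, Shephard--Todd/Serre.)} Suppose $S^G=\CC[f_1,\dots,f_n]$ is polynomial. Let $H\trianglelefteq G$ be the normal subgroup generated by all reflections in $G$. By the first direction, $S^H=\CC[h_1,\dots,h_n]$ is also polynomial. We compare degrees via Molien's formula. Expanding $\frac1{|G|}\sum_g\det(1-gt)^{-1}$ around $t=1$ and comparing it with $\prod_i(1-t^{d_i})^{-1}$ gives two identities:
\[
\prod_i d_i=|G| \qquad\text{and}\qquad \sum_i(d_i-1)=\#\{\text{reflections in }G\}.
\]
The same identities hold for $H$ with its degrees $e_i$, and $G$ and $H$ contain the same reflections. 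Since $S^G\subset S^H$ are both polynomial in $n$ generators, the degrees satisfy $d_i\ge e_i$ after suitable ordering. Combined with $\sum(d_i-1)=\sum(e_i-1)$, this forces $d_i=e_i$ for all $i$. Then $|G|=\prod_i d_i=\prod_i e_i=|H|$, so $G=H$ is a reflection group.

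I expect the main obstacle to be the degree comparison $d_i\ge e_i$. It comes from the fact that $S^H$ is a free graded module over $S^G$, or can be obtained via a Jacobian argument. An alternative route is a geometric ramification argument: the map $\CC^n/H\to\CC^n/G$ is a quotient by $G/H$, and $G/H$ acts on $\CC^n/H\iso\CC^n$ without reflections in codimension one. By purity of the branch locus, a finite map between smooth spaces that is unramified in codimension one is étale. Since $\CC^n$ is simply connected, this étale map must be an isomorphism, giving $G/H=1$. I would try this purity argument first, as it is the cleanest.
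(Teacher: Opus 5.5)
The paper does not prove this statement. It is quoted from Chevalley and Shephard--Todd and used only as a black box, to identify singularities of orbifold divisors on smooth surfaces with reflection group singularities, so there is no internal proof to compare with. Your outline is the classical argument: Chevalley's averaging lemma plus Euler's identity for ($\Rightarrow$), and for ($\Leftarrow$) either Molien's formula with a comparison against the reflection subgroup $H$, or purity of the branch locus. It is essentially correct. The purity variant is the better fit for this paper, because it only uses that $\CC^n/G$ is smooth near the image of $0$. That is closer to the local form in which the paper applies the theorem: smoothness of $X$ at $x$ forces $\Gamma_x$ to be a reflection group.

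Several steps need one more sentence.

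In ($\Rightarrow$), say why $r=n$. $S$ is integral over $S^G$, since each $x_k$ is a root of $\prod_{g\in G}(T-gx_k)$, so $S^G$ has transcendence degree $n$. Also take $h$ weighted-homogeneous, with weights $\deg f_i$, so that the $(\partial_i h)(f)$ are homogeneous and not all zero.

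In ($\Leftarrow$) there is one genuine omission. The hypothesis is an \emph{abstract} isomorphism $S^G\cong\CC[y_1,\dots,y_n]$, but Molien's formula yields $\prod_i(1-t^{d_i})^{-1}$ only for homogeneous, algebraically independent generators. You can bridge this as follows. Write $A=S^G$ with irrelevant ideal $A_+$. Then $A$ is regular of dimension $n$, so $\dim_\CC A_+/A_+^2=n$. Homogeneous lifts of a basis generate $A_+$ by graded Nakayama, hence generate $A$, and they are algebraically independent because $\dim A=n$.

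The comparison $d_i\ge e_i$, which you flag as the main obstacle, is easy. Sort both lists increasingly. Then $f_1,\dots,f_k$ are $k$ algebraically independent elements of $\CC[h_j: e_j\le d_k]$, so at least $k$ of the $e_j$ are $\le d_k$, that is, $e_k\le d_k$.

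For the purity route, make the codimension claim explicit. The stabilizer in $G/H$ of the image of $x$ in $\CC^n/H$ is $G_xH/H$. If $x$ avoids $\mathrm{Fix}(g)$ for every $g\ne 1$ that is not a reflection (a finite union of subspaces of codimension $\ge 2$), then $G_x\subseteq H$.

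Zariski--Nagata purity only needs $\CC^n/H$ normal and $\CC^n/G$ regular, so you do not need ($\Rightarrow$) at this point. Nor do you need simple connectivity. The image of $0$ is fixed by all of $G/H$, so its fibre is a single point, and for an \'etale map of degree $|G/H|$ this forces $G=H$. This route also avoids the homogeneity issue above.
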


\begin{corollary}\ 
\begin{parts}
\item
    If $Y$ is a complex manifold and $\Gamma < \Aut Y$ acts properly discontinuously with smooth quotient $X \defeq Y / \Gamma$, then every singularity on the branch divisor of the quotient map $Y \to X$ is a reflection group singularity.
\item
    If $X$ is a smooth variety with an orbifold structure, then any singularity on the orbifold divisor is a reflection group singularity.
\end{parts}
\end{corollary}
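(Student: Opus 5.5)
Both parts are local statements, so the plan is to reduce each to a neighbourhood of a single point and then invoke \cref{chevalley-smooth-quotient}. Part (b) will follow from part (a) applied chart by chart, since the orbifold divisor is by definition assembled from the branch divisors of the local charts.

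For part (a), fix a singular point $x$ of the branch divisor $D$ and a preimage $y \in \pi^{-1}(x)$, and set $G \defeq \stab_\Gamma(y)$. By \cref{local-model-reflection-group}, $G$ is finite and there is a $G$-invariant chart $V \ni y$ with linear coordinates centred at $y$, such that $\pi|_V: V \to \pi(V)$ is the quotient $V \to V/G$. This identifies the germ of $(X,D)$ at $x$ with the germ at the origin of $\CC^n/G$ together with the branch divisor of $\CC^n \to \CC^n/G$. To make this precise, shrink $V$ to a small $G$-invariant ball, which is possible because $G$ acts linearly and may be taken unitary after averaging a Hermitian form. The branch data near $x$ is then exactly that of the linear action restricted to the ball. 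Since $X$ is smooth at $x$, the germ $(\CC^n/G, 0)$ is smooth.

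The main step is to upgrade this local smoothness to the hypothesis of \cref{chevalley-smooth-quotient}, which asks that $\CC^n/G \iso \CC^n$ globally as an affine variety. The action is linear, so $\CC^n/G = \operatorname{Spec} \CC[x_1,\dots,x_n]^G$ is a cone: the invariant ring is graded and $\CC^*$ acts with the origin as its unique fixed point in the closure of every orbit. Smoothness at the vertex then forces smoothness everywhere, because each point flows to the vertex under scaling and the singular locus is closed and $\CC^*$-stable. Furthermore, a graded ring that is regular at its irrelevant ideal is a polynomial ring: minimal homogeneous generators of the maximal ideal give $n$ homogeneous generators, and these are algebraically independent by dimension count (graded Nakayama). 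Hence $\CC^n/G \iso \CC^n$, and \cref{chevalley-smooth-quotient} shows that $G$ is a reflection group.

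It remains to check that the weighted singularity at $x$ is the origin singularity of $G$. The weights are the branch indices of components through $x$. These are the orders of the cyclic pointwise stabilizers of reflection hyperplanes of $G$, and they coincide with the global branch indices because $V \to V/G$ is a restriction of $\pi$. So $x$ is a reflection group singularity. The step I expect to be the main obstacle is the passage from analytic-local smoothness to the algebraic global statement. I would handle it by the graded-ring argument above, or alternatively by citing the local analytic form of Chevalley--Shephard--Todd directly.
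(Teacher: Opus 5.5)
Your proposal is correct, and it is the argument the paper leaves implicit: the paper states the corollary without proof immediately after \cref{chevalley-smooth-quotient}, with the intended reasoning being to linearize the stabilizer via \cref{local-model-reflection-group}, transfer smoothness of $X$ at $x$ to the germ of $\CC^n/G$ at the origin, apply Chevalley--Shephard--Todd, and obtain part (b) chart by chart. Your graded-Nakayama step, which upgrades local smoothness to $\CC[x_1,\dots,x_n]^G$ being a polynomial ring, is a detail the paper glosses over. It is sound once you add the standard fact you use tacitly: analytic smoothness of the quotient germ at $0$ is equivalent to regularity of the algebraic local ring there, for instance because both rings have completion $\CC[[x_1,\dots,x_n]]^G$.
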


A complete classification of finite complex reflection groups in all dimensions was given by Shephard--Todd \cite{shephard-todd-1954-fur}. The branch divisor of any complex reflection group can be computed from the data given in the classification, so reflection group singularities are also classified. The reflection group singularities in dimension 2 are enumerated in \cref{reflection-group-singularities}.

\begin{longtblr}[
    caption={Classification of reflection group singularities in dimension 2. For references and notation, see the remarks immediately below. },
    label={reflection-group-singularities}
]{
  colspec={ c | l l | l | l },
}
\SetCell{l}
\text{local model}
    & \SetCell[c=2]{m} \text{singularity type} &
    & \text{S-T}
    & \text{group structure} \\
\SetCell[r=2]{t,c,mode=text}
{\img[0.22]{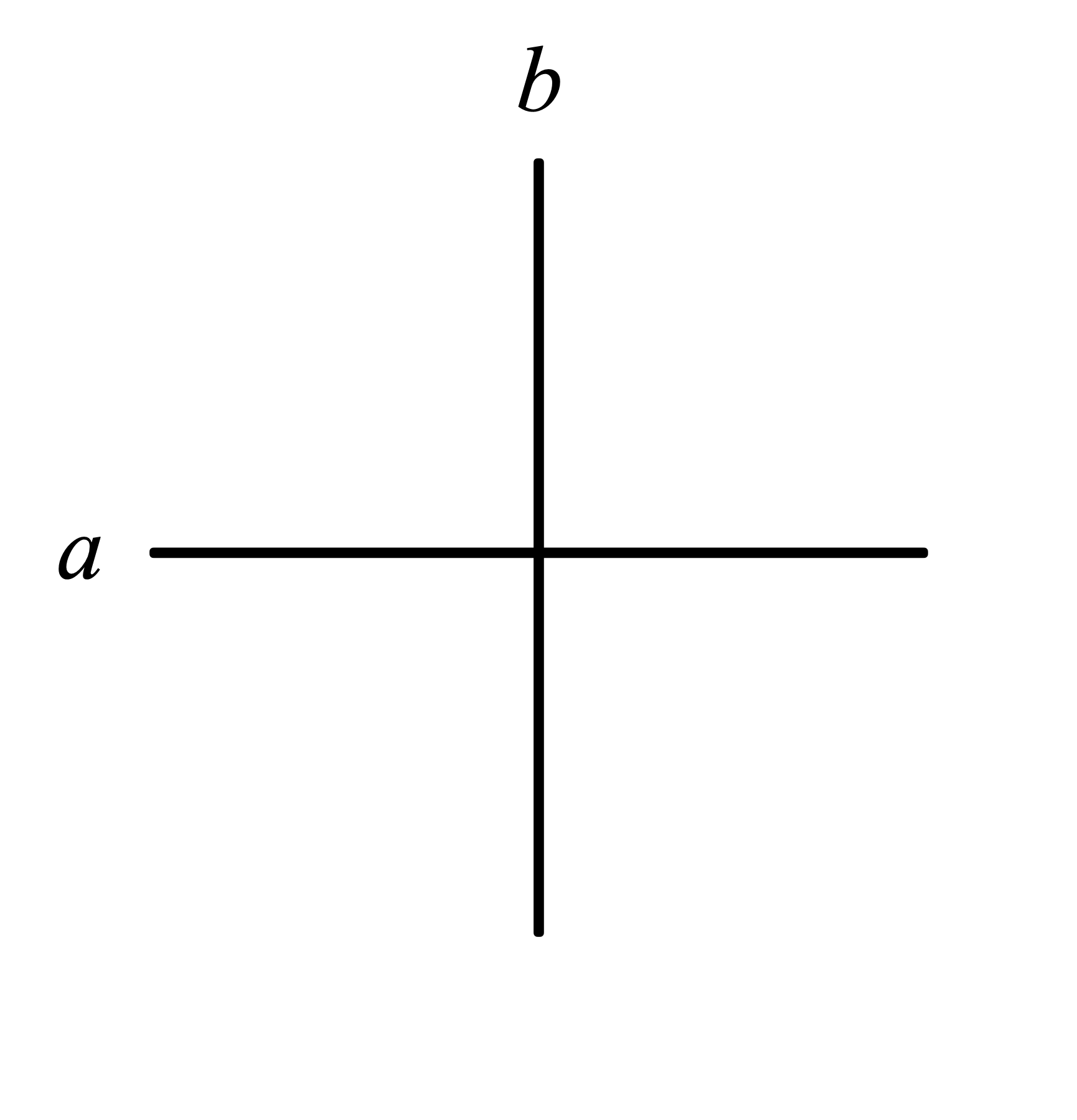} \\ \vspace{-1.5em} $xy = 0$}
    & a,b & (a,b \geq 2) & R & \ZZ/(a) \x \ZZ/(b) \\
    & \\
    \hline
\SetCell[r=6]{t,c,mode=text}
{\img[0.22]{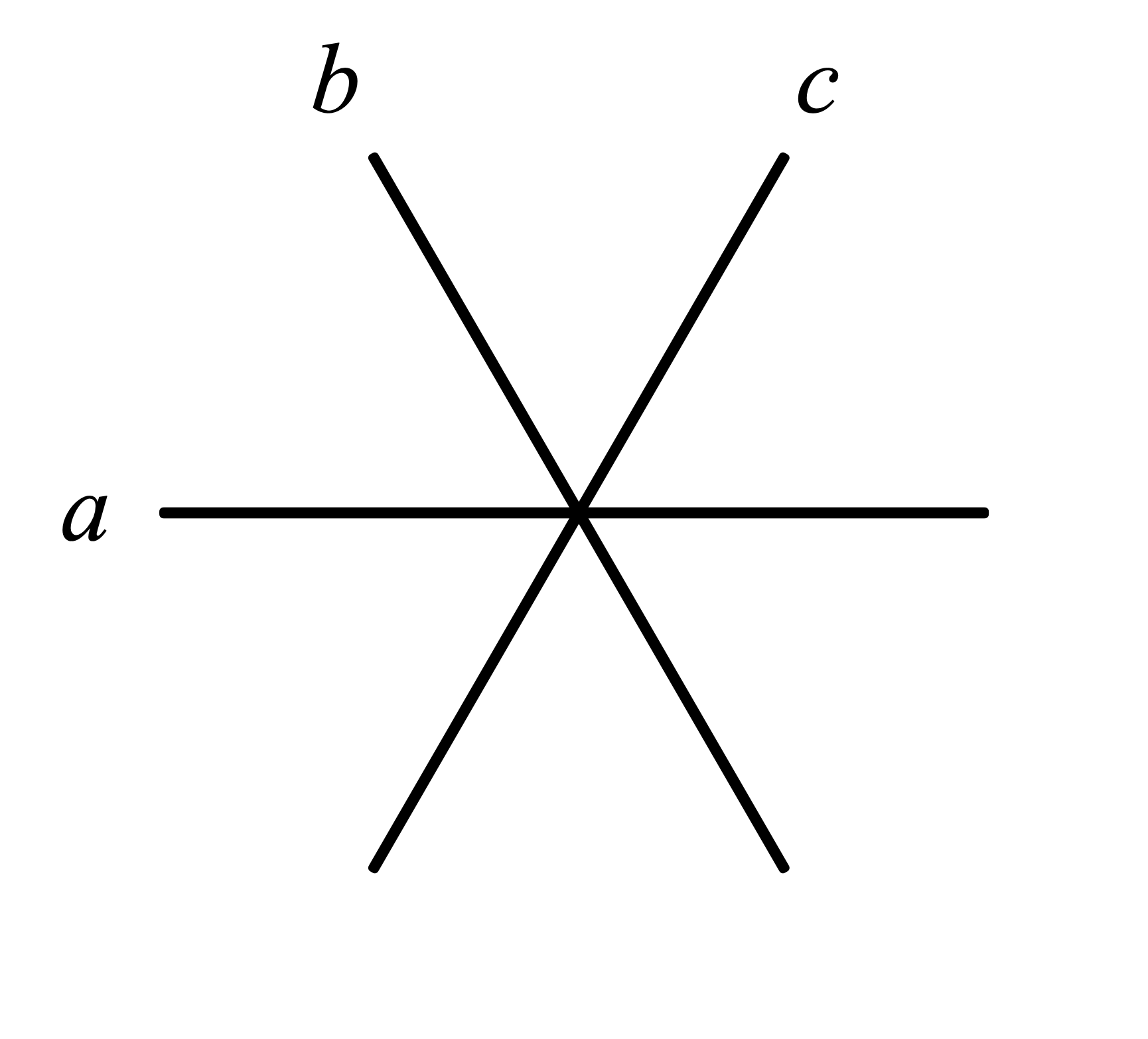} \\ \vspace{-1.2em} $y(y+x)(y-x) = 0$}
    & \underline{a,b,c}\!: \\
    & 2,2,c & (c \geq 2) & 2 & G(2c,2,2) \iso \ang{2,2,c}_c \\
    & 2,3,3 & & 7  & \ang{2,3,3}_6 \\
    & 2,3,4 & & 11 & \ang{2,3,4}_{12} \\
    & 2,3,5 & & 19 & \ang{2,3,5}_{30} \\
    & \\
    \hline
\SetCell[r=10]{t,c,mode=text}
{\img[0.22]{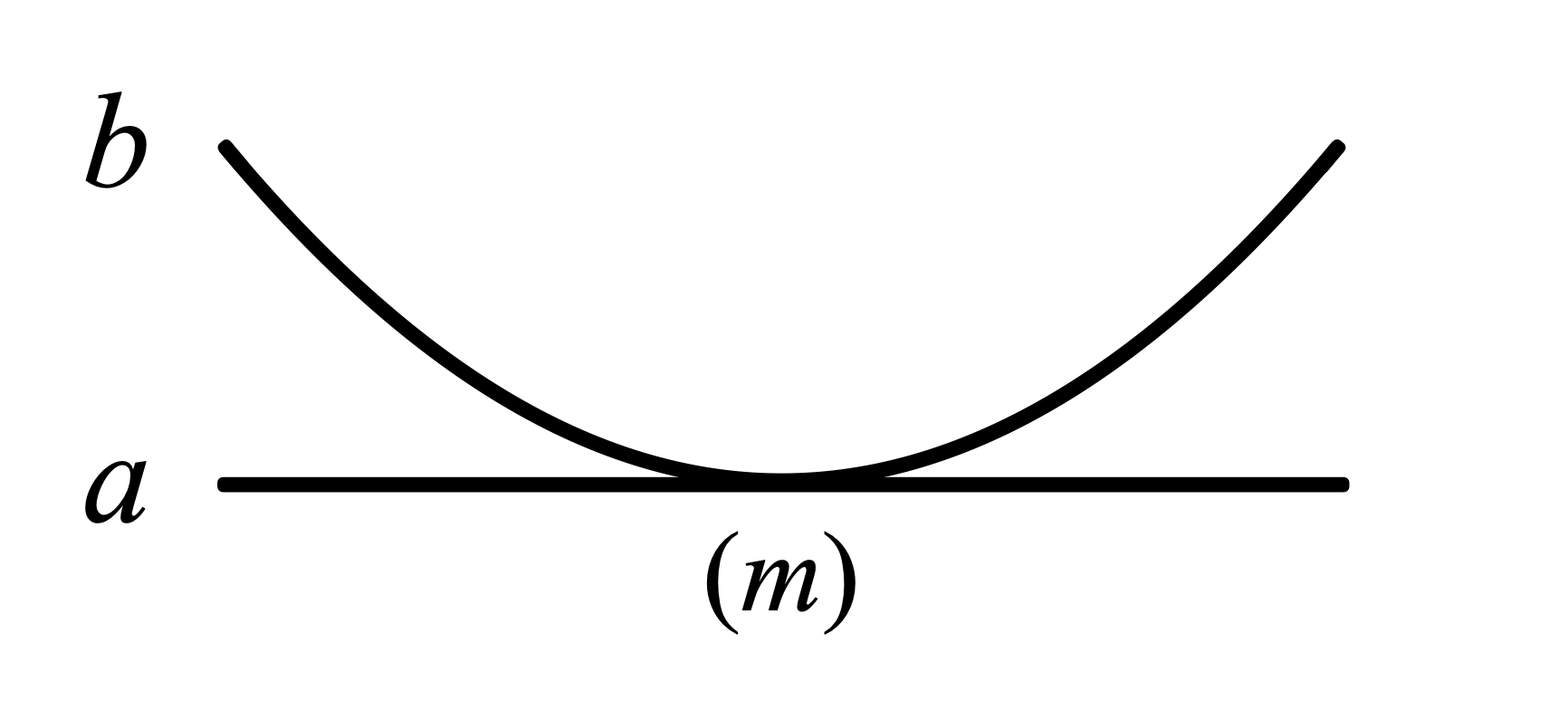} \\ \vspace{-0.3em} $y(y - x^m) = 0$}
    & \underline{a(m)b}\!: & & \\
    & 2(2)b & (b \geq 2) & 2 & G(b,1,2) \iso 2[4]b \\
    & 2(m)2 & (m \geq 2) & 2 & G(2m,2m,2) \iso 2[2m]2 \\
    & 2(3)3 & & 6  & 2[6]3 \\
    & 3(2)3 & & 5  & 3[4]3 \\
    & 2(4)3 & & 14 & 2[8]3 \\
    & 2(3)4 & & 9  & 2[6]4 \\
    & 3(2)4 & & 10 & 3[4]4 \\
    & 2(5)3 & & 21 & 2[10]3 \\
    & 2(3)5 & & 17 & 2[6]5 \\
    & 3(2)5 & & 18 & 3[4]5 \\
    \hline
\SetCell[r=4]{t,c,mode=text}
{\img[0.22]{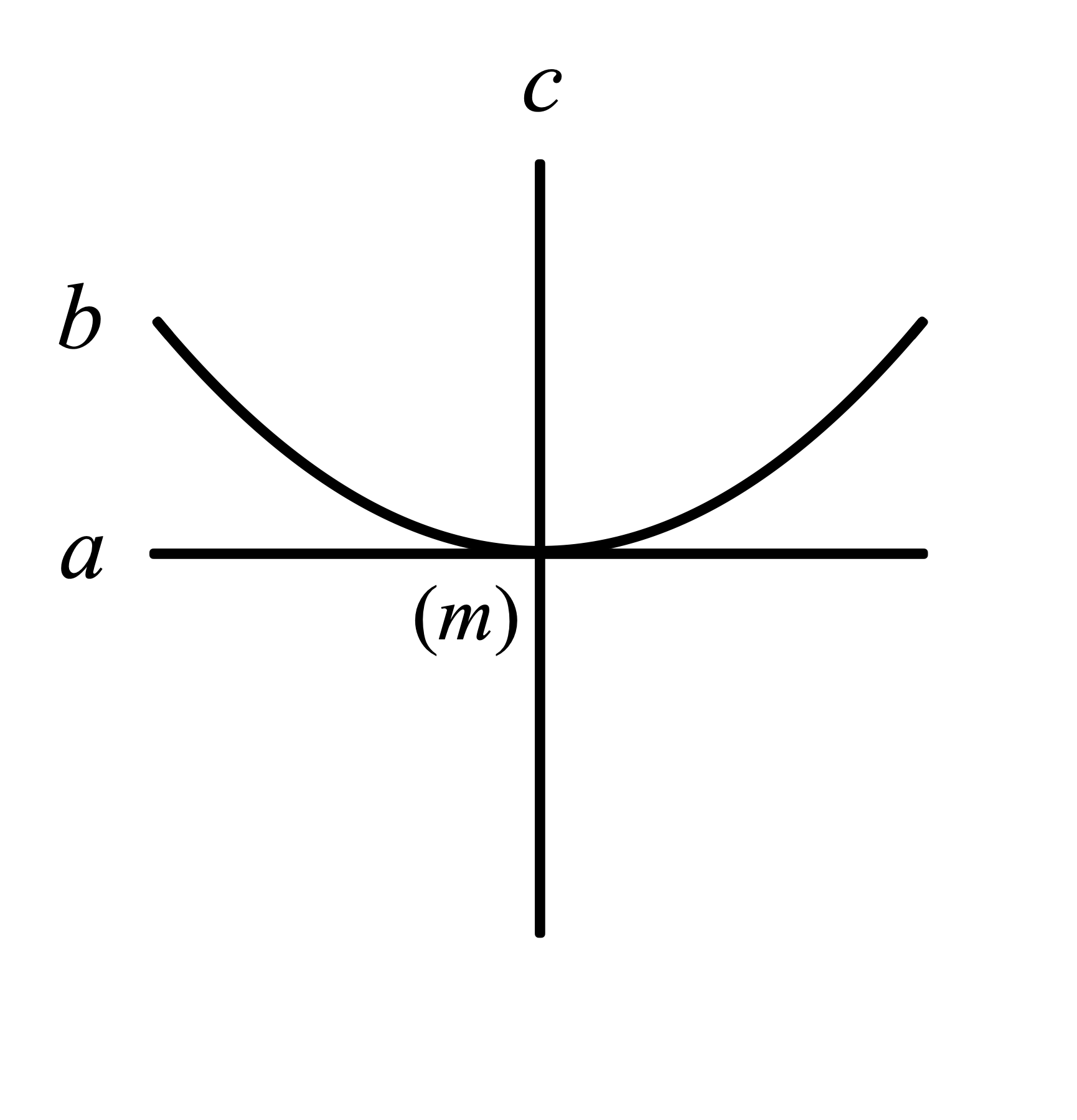} \\ \vspace{-1.5em} $xy(y-x^m) = 0$}
    & \underline{a(m)b,c}\!: & & \\
    & 2(m)2,c &(m,c \geq 2) & 2 & G(2mc, 2m, 2) \\
    & 2(2)3,2 & & 15 & \ang{2,3,4}_6 \\
    & \\
    \hline\pagebreak
\SetCell[r=7]{t,c,mode=text}
{\img[0.22]{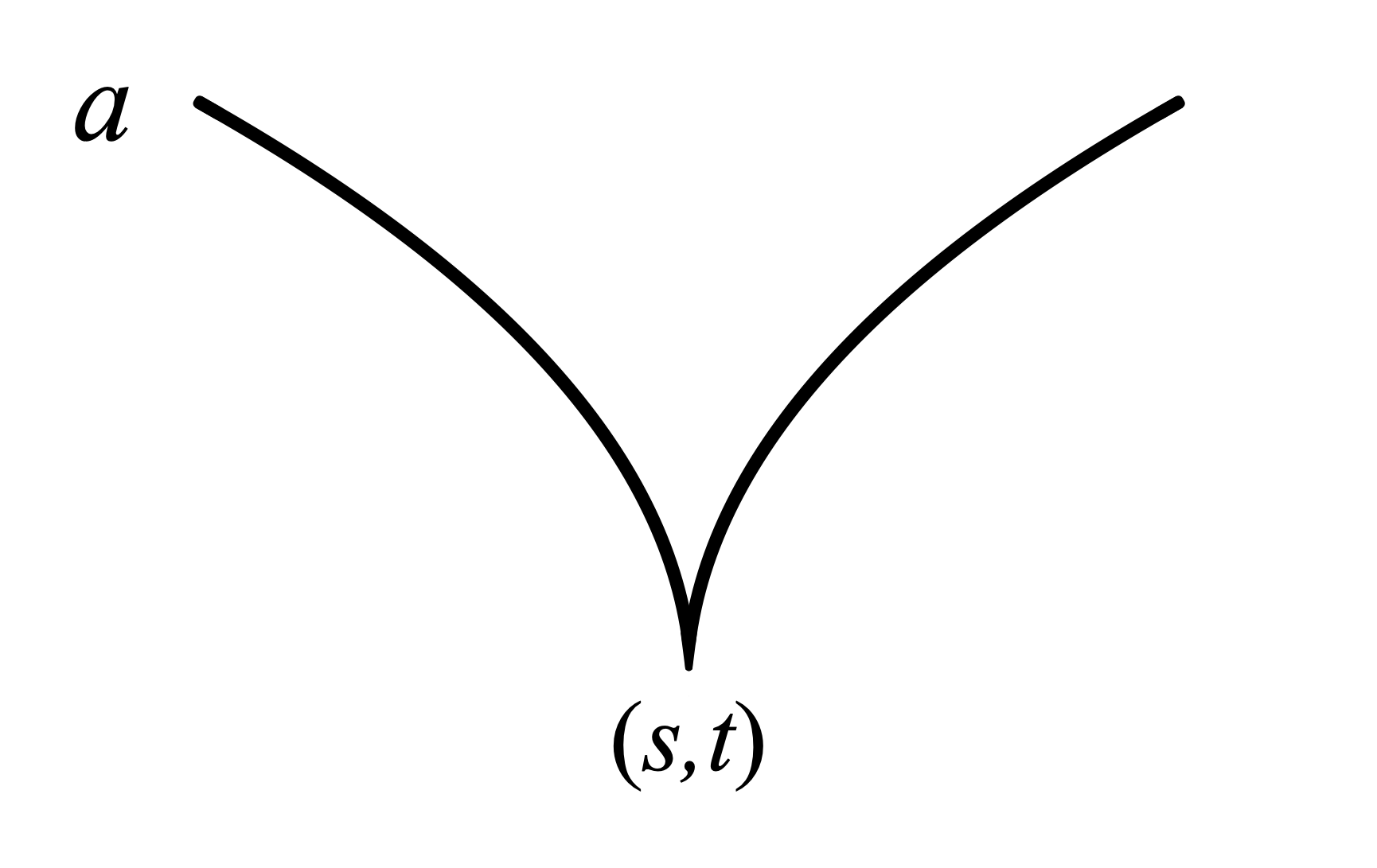} \\ \vspace{-0.3em} $y^t - x^s = 0$}
    & \underline{a_{(s,t)}}\!: & & \\
    & 2_{(2,t)} & (t \text{ odd} \geq 3) & 2 & G(t,t,2) \iso 2[t]2 \\
    & 3_{(2,3)} & & 4 & \ang{2,3,3}_1 \iso 3[3]3 \\
    & 2_{(3,4)} & & 12 & \ang{2,3,4}_1 \\
    & 4_{(2,3)} & & 8 & 4[3]4 \\
    & 2_{(3,5)} & & 22 & \ang{2,3,5}_2 \\
    & 3_{(2,5)} & & 20 & 3[5]3 \\
    & 5_{(2,3)} & & 16 & 5[3]5 \\
    \hline
\SetCell[r=3]{t,c,mode=text}
{\img[0.22]{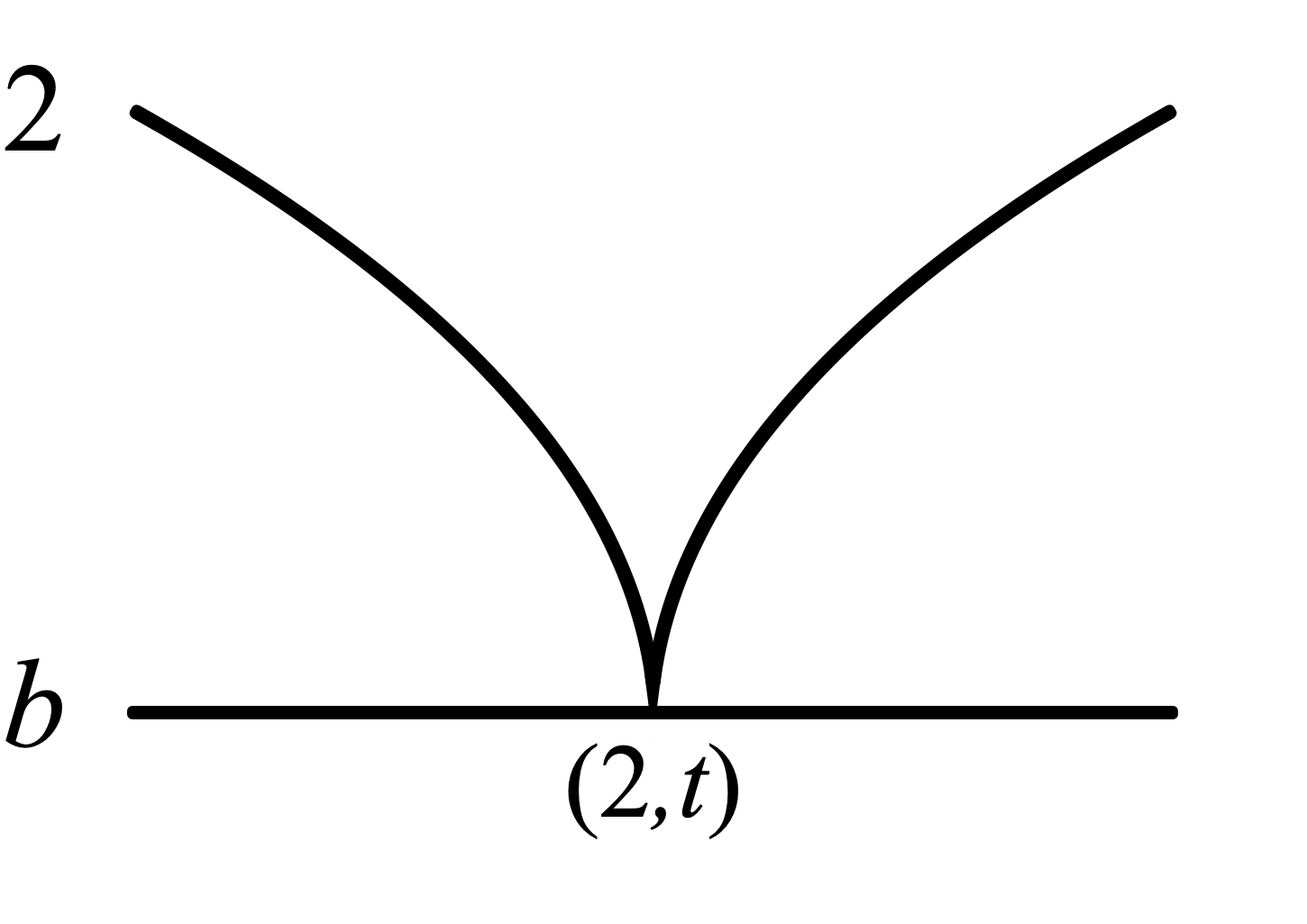} \\ \vspace{-0.2em} $y(y^t - x^2) = 0$}
    & 2_{(2,t)},b & \SetCell[r=2]{h,r,mode=text}{$(t \text{ odd} \geq 3,$ \\ $b \geq 2)$} & 2 & G(tb, t, 2) \\
    &&& \\
    & \\
    \hline
\SetCell[r=2]{t,c,mode=text}
{\img[0.22]{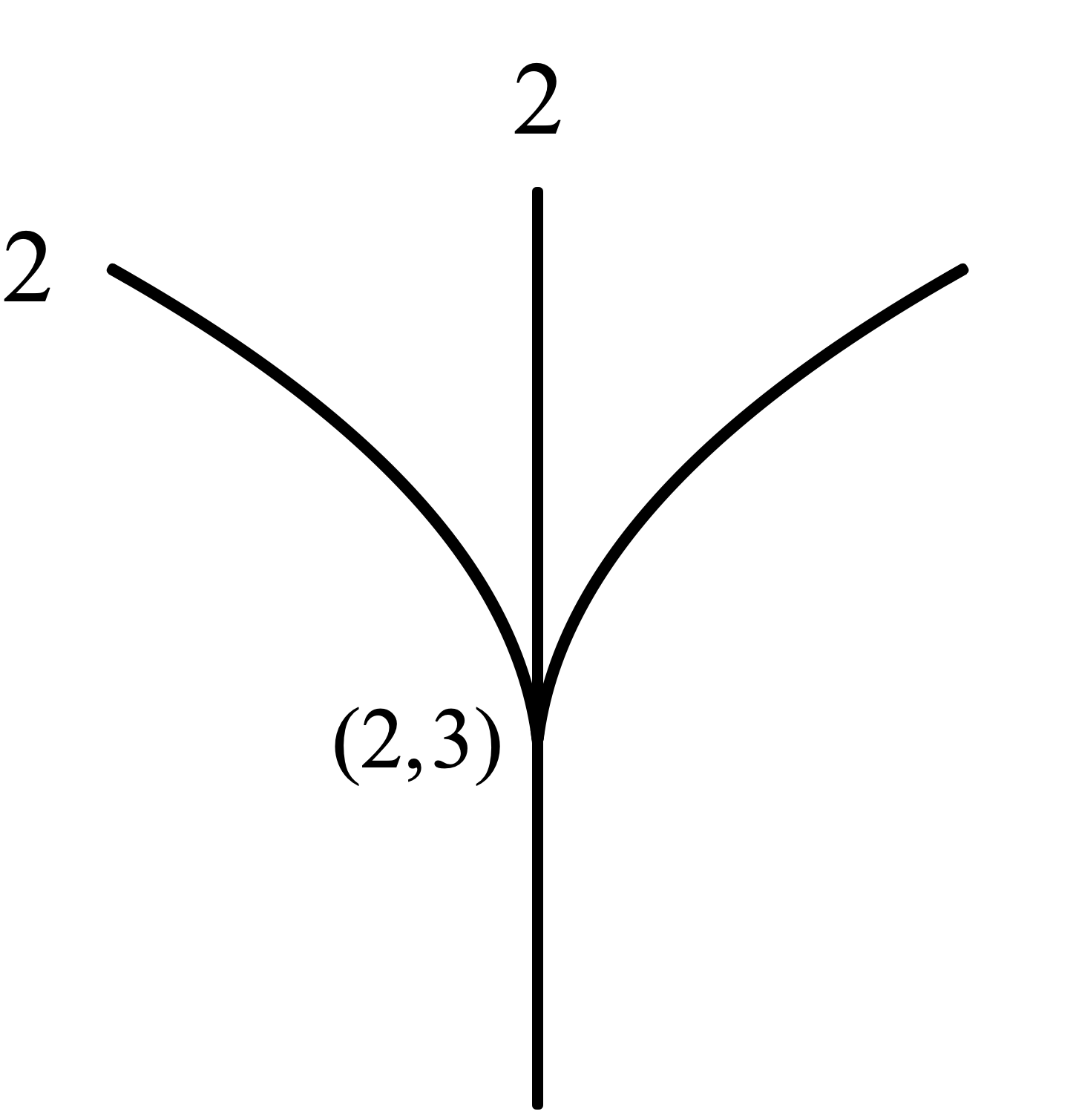} \\ $x(y^3 - x^2) = 0$}
    & 2_{(2,3)}(3)2 & & 13 & \ang{2,3,4}_2 \\
    & \\
\end{longtblr}
\vspace{-1ex} 
\emph{References for \cref{reflection-group-singularities}.}\enspace
The original classification of complex reflection groups is given in \cite{shephard-todd-1954-fur}; alternative expositions with emphasis on the dimension 2 classification include \cite[Chapters 2, 5, and 6]{lehrer-taylor-2009-urg} and \cite[Sections 2 and 3]{cohen-1976-fcr}. All three sources derive the order, the number of reflections, and a set of basic invariants. The branch structures may be deduced from a set of basic invariants, cf. \cite[\S 2]{bannai-1976-fgs}, \cite[\S 11.2]{yoshida-1987-fde}, or \cite[Table 1]{uludag-2005-gcp}. For group structure presentations, see the diagrams in \cite[Section 5]{broue-ea-1995-crg}. 

\emph{Notation in \cref{reflection-group-singularities}.}\enspace
Complex reflection groups are listed in the \textbf{S-T} column by their Shephard--Todd number (as assigned in \cite{shephard-todd-1954-fur}) with `R' standing for a reducible reflection group. Every rank 2 irreducible reflection group has one of the following group structures:
    \begin{itemize}
    \item
        The groups $G(m,p,2)$ belong to a large infinite family $G(m,p,n)$ of imprimitive reflection groups, all given the Shephard--Todd number 2. See e.g. \cite[Chapter 2]{lehrer-taylor-2009-urg} for a complete exposition. 
    \item
        $\ang{p,q,r}_s$ denotes the $s$-ary polyhedral group 
        \begin{gather*}
            \ang{A,B,C,Z: A^p = B^q = C^r = ABC = Z^s,\ [Z,A] = [Z,B] = [Z,C] = 1}.
        \end{gather*}
    \item
        $p[q]r$ denotes the Shephard group  
        \begin{gather*}
            \big\langle A,B : A^p = B^r = 1,\ \underbrace{ABA\cdots}_q = \underbrace{BAB\cdots}_q \big\rangle.
        \end{gather*}
    \end{itemize}

\newpage
\begin{notation}(Reflection group singularities)\label{singularity-notation}
Throughout this paper, reflection group singularities will be referred to by the symbols listed in \cref{reflection-group-singularities}, enclosed in square brackets, e.g. $[2,3,5]$. A symbol containing variables such as $[a,b,c]$ or $[2,2,c]$ refers to the family of reflection group singularities of that shape unless the variables are otherwise meaningful in context.

Singularity pairs $(T,C)$ will be denoted by making bold the symbols representing the branch curve $C$. For example, $[\mathbf{2},3,5]$ denotes the pair consisting of the singularity $[2,3,5]$ and the curve with branch index $2$. Note that the general theory in \cref{sec:computational-framework} allows for singularity pairs with locally reducible germs, such as $[\mathbf{2},\mathbf{2},3]$, although these do not arise in the main proof in \cref{sec:main-theorem-proof}.

A singularity configuration on a divisor or curve is denoted by a list of symbols, with superscripts denoting multiplicity. For example, $D_Q$ (see \cref{fig:line-arrangements}) has singularity configuration consisting of one triple point of type $[2,2,2]$, three triple points of type $[2,3,3]$, and three normal-crossings of type $[2,3]$, denoted
    \begin{gather*}
        [2,2,2]\,[2,3,3]^3\,[2,3]^3.
    \end{gather*}
\end{notation}

\section{Numerical constraints on ball quotient orbifolds}
\label{sec:computational-framework}

This section introduces ball quotients and presents several numerical constraints satisfied by ball quotient orbifolds $(X,D)$. The theory in this section is implicit in the literature; the purpose of this section is to re-work existing results into directly applicable forms.

\subsection{Ball quotients and ball quotient orbifolds}

The open unit ball $\BB^n$ in $\CC^n$ admits a complex-hyperbolic metric (the Bergman metric) such that
\begin{gather*}
    \Isom_\CC \BB^n = \Aut \BB^n \iso \PU_{1,n}(\CC)
\end{gather*}
(here $\Isom_\CC \BB^n$ denotes the holomorphic isometry group).

A \emph{ball quotient} is a finite-volume quotient $\BB^n / \Gamma$ where $\Gamma < \PU_{1,n}(\CC)$ is a discrete group of biholomorphic isometries. A ball quotient is called \emph{torsion-free} or \emph{torsion} according to whether its defining lattice $\Gamma$ is torsion-free or not.

In general, a ball quotient $X$ is a normal quasi-projective variety and not necessarily smooth. If $\Gamma$ is torsion-free then the quotient map $\BB^n \to X$ is the universal covering of $X$, so $X$ is a complete complex-hyperbolic manifold (in particular, smooth and aspherical).

When $\Gamma$ has torsion, the quotient map $\BB^n \to X$ is a branched cover whose branch divisor $D$ endows $X$ with the structure of a complex-hyperbolic orbifold $(X,D)$. An orbifold divisor $D$ arising in this manner is called a \emph{ball quotient divisor} on $X$; the pair $(X,D)$ is called a \emph{ball quotient orbifold}.

\begin{remark}[Any ball quotient orbifold is finitely uniformized by a torsion-free ball quotient]
\label{ball-quotients:finite-uniformization}
    For any ball quotient orbifold $(X,D) = \BB^n / \Gamma$, there exists a torsion-free ball quotient $Y = \BB^n / \Lambda$ such that the quotient map $\BB^n \to X$ factors as a composition of normal covers
    \begin{gather*}
        \BB^n \xrightarrow[\text{unbranched}]{\Lambda} Y \xrightarrow[\text{finite branched}]{\Gamma/\Lambda} X.
    \end{gather*}
    This follows from Selberg's Lemma: Any lattice $\Gamma < \PU_{1,n}(\CC)$ admits a normal finite-index torsion-free subgroup $\Lambda$. The deck group of the map $Y \to X$ is the finite group $G \defeq \Gamma / \Lambda < \Aut Y$ and $Y / G = (X,D)$ as orbifolds.
\end{remark}

\newpage 
The following is well-known and included here for completeness.

\begin{proposition}
\label{ball-quotients:torsion-free-ample}
    A compact torsion-free ball quotient is a projective variety with ample canonical class.
\end{proposition}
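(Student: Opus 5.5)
Let $Y = \BB^n/\Lambda$ be compact with $\Lambda$ torsion-free. The plan is to show that $K_Y$ carries a Hermitian metric of positive curvature, and then to invoke Kodaira's embedding theorem. Since $\Lambda$ acts freely and properly discontinuously by holomorphic isometries of the Bergman metric, $Y$ is a compact complex manifold. Moreover, $Y$ inherits a Kähler metric $\omega$ from the Bergman metric, because the Bergman metric is $\Aut\BB^n$-invariant and Kähler.

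The key computation is that the Bergman metric on $\BB^n$ is Kähler--Einstein with negative constant:
\begin{gather*}
    \Ric(\omega_{\BB}) = -c\,\omega_{\BB}, \qquad c>0.
\end{gather*}
The Bergman metric has Kähler potential $\omega_{\BB} = -i\partial\bar\partial \log(1-|z|^2)$, up to normalization. Its volume form is a constant times $(1-|z|^2)^{-(n+1)}$ times the Euclidean volume form. Hence
\begin{gather*}
    \Ric = -i\partial\bar\partial\log\det(g_{j\bar k}) = (n+1)\, i\partial\bar\partial\log(1-|z|^2) = -(n+1)\,\omega_{\BB}.
\end{gather*}
Alternatively, one can avoid the computation by a homogeneity argument: $\Aut\BB^n$ acts transitively and the isotropy group $U(n)$ acts irreducibly on the tangent space, so the metric is Einstein, and the constant is negative because the holomorphic sectional curvature is negative.

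These identities are local and invariant under isometries, so they descend to $Y$. There the volume form of $\omega$ defines a Hermitian metric $h$ on the anticanonical bundle $K_Y^{-1}$, whose Chern curvature form is $\Ric(\omega)$. The dual metric on $K_Y$ therefore has curvature form $-\Ric(\omega) = (n+1)\omega$, which is positive definite.

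Thus $c_1(K_Y)$ is represented by a positive $(1,1)$-form, so $K_Y$ is a positive line bundle on the compact Kähler manifold $Y$. By the Kodaira embedding theorem, some power $K_Y^{\otimes m}$ embeds $Y$ into projective space. Chow's theorem then shows that $Y$ is a projective variety, and $K_Y$ is ample.

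The only step with real content is the Einstein computation, and I expect it to be the main obstacle, though it is standard. If needed, I would simply cite the explicit potential above: the determinant of $\partial_j\bar\partial_k\bigl(-\log(1-|z|^2)\bigr)$ equals $(1-|z|^2)^{-(n+1)}$, and this can be checked at $z=0$ and transported by transitivity.
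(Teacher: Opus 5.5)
Your proposal is correct and is essentially the paper's argument: the Bergman metric descends to a K\"ahler metric on $Y$ whose Ricci form is a negative multiple of the K\"ahler form, so $c_1(K_Y)$ is represented by a positive form and Kodaira embedding gives projectivity and ampleness; the only difference is that the paper cites Kobayashi--Nomizu for $\Ric_\omega = \frac12(n+1)\kappa\omega$, whereas you compute $\Ric = -(n+1)\omega_{\BB}$ directly. One minor slip in your closing remark: $\det(g_{j\bar k})$ is not invariant under automorphisms (it picks up a factor $|\det J|^{2}$), so it cannot be ``checked at $z=0$ and transported''; instead either compute it directly (your matrix-determinant computation already does this), or transport the invariant identity $\Ric = -(n+1)\omega_{\BB}$ from $z=0$.
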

\begin{proof}
The Bergman metric on $\BB^n$ is K\"ahler and has constant negative holomorphic curvature (a standard exposition can be found in e.g. \cite[Section 3.1]{goldman-1999-chg}). 

If $X$ is a torsion-free ball quotient then the quotient map $\BB^n \to X$ is an unbranched covering map so $X$ is smooth and inherits a K\"ahler metric with constant negative holomorphic curvature $\kappa < 0$. Let $\omega$ denote the K\"ahler form on $X$. The Ricci form on $X$ satisfies
\begin{gather*}
    \Ric_\omega = \frac 12 (n+1) \kappa \omega
\end{gather*}
by e.g. \cite[Remark on p. 168]{kobayashi-nomizu-1996-fdg2}. On the other hand, a property of the Ricci form is that
\begin{gather*}
    c_1(X) = \l[ \frac{1}{2\pi} \Ric_\omega \r]
\end{gather*}
so then $c_1(K_X) = -c_1(X)$ is a positive multiple of the K\"ahler form $\omega$. When $X$ is compact, Kodaira's embedding theorem implies that $K_X$ is ample. In particular, $X$ is projective of general type.
\end{proof}

\begin{corollary}
\label{ball-quotients:orbifold-ample}
    If $(X,D)$ is a compact ball quotient orbifold, then $K_X + D$ is ample as a $\QQ$-divisor.
\end{corollary}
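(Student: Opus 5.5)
The plan is to pull the problem back to a torsion-free uniformization, where \cref{ball-quotients:torsion-free-ample} applies, and then push ampleness back down. By \cref{ball-quotients:finite-uniformization}, write $(X,D) = \BB^n/\Gamma$ and choose a normal torsion-free finite-index subgroup $\Lambda \lhd \Gamma$. Set $Y \defeq \BB^n/\Lambda$ and $G \defeq \Gamma/\Lambda$, so that $\pi : Y \to X = Y/G$ is a finite branched cover. Since $X$ is compact and $\pi$ is finite, $Y$ is compact. \cref{ball-quotients:torsion-free-ample} then says that $Y$ is projective and $K_Y$ is ample.

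Next I would identify the branch divisor of $\pi: Y \to X$ with $D$. The branch divisor of $\BB^n \to X$ is $D$ by definition. Because $\Lambda$ acts freely, the stabilizers in $\Gamma$ of points of $\BB^n$ inject into $G$ as the stabilizers of their images in $Y$. Hence the local groups, and therefore the branch indices along the codimension 1 components, agree for $Y \to X$ and for $\BB^n \to X$. \cref{lift-canonical} then gives $K_Y = \pi^*(K_X + D)$ as $\QQ$-divisors; here $X$ is smooth, being the underlying surface of an orbifold in this paper's conventions.

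The main step is to descend ampleness along the finite surjective morphism $\pi$: a $\QQ$-Cartier divisor $L$ on $X$ with $\pi^*L$ ample is itself ample. I would argue via Nakai--Moishezon, or more generally via the ampleness criterion for finite morphisms (ampleness is detected after pullback by a finite surjective map, e.g.\ Hartshorne Exercise III.5.7 / Lazarsfeld Corollary 1.2.28). Concretely, let $V \subseteq X$ be an irreducible subvariety of dimension $k$, and let $V' \subseteq Y$ be a component of $\pi^{-1}(V)$, which maps finitely onto $V$ with some degree $d > 0$. The projection formula gives
\begin{gather*}
    (\pi^*L)^k \cdot V' = d\, L^k \cdot V.
\end{gather*}
The left side is positive, so $L^k \cdot V > 0$. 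Applying this to $L = K_X + D$ and using Nakai--Moishezon on the projective variety $X$ gives the result.

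There is one subtlety: we need $X$ to be projective, or at least to make sense of Nakai--Moishezon on it. I expect this to be the main obstacle, though a mild one. I would handle it by noting that the quotient of a projective variety by a finite group is projective: take a $G$-invariant ample divisor on $Y$, such as $\sum_{g\in G} g^*H$, or simply $K_Y$ itself, which is $G$-invariant; its norm descends to an ample divisor on $Y/G$. Alternatively, since all orbifolds here have algebraic underlying surfaces, projectivity can be taken from the conventions. With $X$ projective, the Nakai--Moishezon step completes the proof.
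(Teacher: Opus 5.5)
Your proposal is correct and follows the paper's proof: take a finite uniformization $\pi: Y \to X$ by a compact torsion-free ball quotient $Y$, use that $K_Y$ is ample and that $K_Y = \pi^*(K_X + D)$, and descend ampleness along the finite surjective map $\pi$. The paper simply cites Lazarsfeld (Proposition 1.2.13, Corollary 1.2.28) for the descent step, which you unpack via the projection formula and Nakai--Moishezon. You also check two things the paper takes for granted: that the branch divisor of $Y \to X$ is $D$, and that $X$ is projective (automatic from the paper's convention that surfaces are quasi-projective).
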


\begin{proof}
    Let $\pi: Y \to X$ be a finite uniformization of $(X,D)$ by a torsion-free ball quotient $Y$. Then $K_Y$ is ample and $K_Y = \pi^*(K_X + D)$ by \cref{lift-canonical}, so $K_X + D$ is also ample because pullbacks along finite surjective maps do not change ampleness \cite[Proposition 1.2.13, Corollary 1.2.28]{lazarsfeld-2004-pag}.
\end{proof}

\subsection{Hirzebruch proportionality and Yau's theorem for orbifolds}

The objective of this subsection is to introduce the main computational tools for this paper.

For the purpose of eliminating ball quotient structures: \cref{orbifold-prop-definitions} and \cref{rem:prop-derived} define several numerical invariants associated to any smooth compact orbifold that vanish for ball quotient orbifolds (stated precisely in \cref{constraints}). The vanishing results are obtained by adapting Hirzebruch proportionality to the orbifold setting.

For the purpose of verifying that a certain orbifold is a ball quotient: \cref{ball-quotients:orbifold} gives a complete characterization of ball quotient orbifolds in terms of numerical invariants. This is essentially Hirzebruch proportionality for orbifolds, combined with an orbifold version of Yau's theorem, due to Kobayashi--Nakamura--Sakai.
\medskip

The following results on compact torsion-free ball quotients are due to Hirzebruch and Enoki respectively:

\begin{theorem}[Hirzebruch proportionality {\cite[Satz 3]{hirzebruch-1958-afu}}]
\label{hirzebruch-proportionality}
    If $Y$ is a compact torsion-free ball quotient surface then
    \vspace{-0.5ex} 
    \begin{gather*}
        c_1^2(Y) = 3c_2(Y).
    \end{gather*}
\end{theorem}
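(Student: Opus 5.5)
The plan is a pointwise Chern--Weil computation. By the proof of \cref{ball-quotients:torsion-free-ample}, $Y$ is a compact Kähler surface whose metric $\omega$, descended from the Bergman metric, has constant negative holomorphic sectional curvature $\kappa<0$. Chern--Weil theory represents $c_1(Y)$ and $c_2(Y)$ by closed forms built from the curvature matrix $\Omega$ of the Chern connection:
\begin{gather*}
    \gamma_1 = \tfrac{i}{2\pi}\operatorname{tr}\Omega,
    \qquad
    \gamma_2 = \l(\tfrac{i}{2\pi}\r)^2 \tfrac12\l((\operatorname{tr}\Omega)^2 - \operatorname{tr}(\Omega\wedge\Omega)\r).
\end{gather*}
It therefore suffices to prove the pointwise identity $\gamma_1\wedge\gamma_1 = 3\gamma_2$ of top-degree forms on $Y$ and then integrate over the compact surface $Y$.

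\textbf{Step 1: the curvature tensor.} A Kähler metric of constant holomorphic sectional curvature $\kappa$ has curvature tensor
\begin{gather*}
    R_{i\bar j k\bar l} = -\tfrac{\kappa}{2}\l(g_{i\bar j}g_{k\bar l} + g_{i\bar l}g_{k\bar j}\r).
\end{gather*}
This is the standard consequence of the fact that the holomorphic sectional curvatures determine the full Kähler curvature tensor by polarization, as in the reference to Kobayashi--Nomizu already used in the paper. In a local unitary coframe $\theta^1,\theta^2$, the curvature matrix then takes the form
\begin{gather*}
    \Omega^i_j = c\,\kappa\l(\theta^i\wedge\bar\theta^j + \delta^i_j \textstyle\sum_k \theta^k\wedge\bar\theta^k\r)
\end{gather*}
for a universal constant $c$.

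\textbf{Step 2: avoid tracking constants by comparing with $\PP^2$.} Both $\gamma_1\wedge\gamma_1$ and $\gamma_2$ are homogeneous of degree $2$ in $\Omega$, hence in $\kappa$. Because $\Omega$ has this universal form, each of them equals $\kappa^2$ times a universal constant times the volume form $\omega^2/2$. The sign of $\kappa$ therefore disappears.

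The same computation applies verbatim to the Fubini--Study metric on $\PP^2$, which has constant positive holomorphic sectional curvature. There the Chern numbers are $c_1^2(\PP^2)=9$ and $c_2(\PP^2)=3$, and the total volume is positive. This forces the universal constants to stand in ratio $9:3$, so that
\begin{gather*}
    \gamma_1\wedge\gamma_1 = 3\gamma_2
\end{gather*}
pointwise for every metric of constant holomorphic sectional curvature in complex dimension $2$. As a consistency check, one can also expand $(\operatorname{tr}\Omega)^2$ and $\operatorname{tr}(\Omega\wedge\Omega)$ directly in the coframe, using $\theta^1\bar\theta^1\theta^2\bar\theta^2$ as the only surviving monomial; this should also give the ratio $3$.

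\textbf{Step 3: integrate.} Integrating the pointwise identity over $Y$ gives $c_1^2(Y) = \int_Y \gamma_1^2 = 3\int_Y\gamma_2 = 3c_2(Y)$.

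The main obstacle is Step 1: justifying the explicit curvature formula and its invariance. I would handle it by working on $\BB^2$, where $\PU_{1,2}(\CC)$ acts transitively with isotropy $\mathrm{U}(2)$. The curvature at the origin is a $\mathrm{U}(2)$-invariant Kähler curvature tensor, and the space of such tensors is one-dimensional, which forces the displayed form up to a scalar. That scalar is then pinned down by $\kappa$. Since the quotient map $\BB^2\to Y$ is a local isometry, the same formula holds on $Y$.
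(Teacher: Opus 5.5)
Your proof is correct. The paper gives no argument for this statement: it is imported from Hirzebruch's Satz~3 and used as a black box. Your proof therefore supplies what the paper outsources, and in substance it is Hirzebruch's own proportionality principle specialized to $\BB^2$. The invariant curvature tensors of $\BB^2$ and of its compact dual $\PP^2$ lie in the same one-dimensional space of $\mathrm{U}(2)$-invariant K\"ahler curvature tensors. Because $\gamma_1\wedge\gamma_1$ and $\gamma_2$ are both quadratic in the curvature, the sign $(-1)^n$ in the general proportionality theorem is $+1$ here.

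Your ``consistency check'' also goes through, and by itself it makes the comparison with $\PP^2$ unnecessary. Write $\phi=\sum_k\theta^k\wedge\bar\theta^k$ and $\nu=\theta^1\wedge\bar\theta^1\wedge\theta^2\wedge\bar\theta^2$. Then $\mathrm{tr}\,\Omega=3c\kappa\phi$, $\phi\wedge\phi=2\nu$, and $\sum_{i,j}\theta^i\wedge\bar\theta^j\wedge\theta^j\wedge\bar\theta^i=-2\nu$. Hence $(\mathrm{tr}\,\Omega)^2=18c^2\kappa^2\nu$ and $\mathrm{tr}(\Omega\wedge\Omega)=6c^2\kappa^2\nu$, so $\gamma_1\wedge\gamma_1=18\left(\frac{i}{2\pi}\right)^2c^2\kappa^2\nu=3\gamma_2$ pointwise.

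The citation buys generality: it covers all Chern numbers of compact quotients of any bounded symmetric domain, with the volume ratio as the proportionality constant. Your route buys a short, self-contained proof of exactly the case the paper needs.
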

\smallskip 

\begin{theorem}[A version of Hirzebruch proportionality for curves in surfaces {\cite[B.1.I, attributed to Enoki]{barthel-ea-1987-gua}}]
\label{relative-proportionality}
    If $Y$ is a compact torsion-free ball quotient surface then any smooth totally geodesic curve $C$ on $Y$ satisfies
    \begin{gather*}
        e(C) = 2 C^2.
    \end{gather*}
\end{theorem}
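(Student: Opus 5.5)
The plan is to compute both sides of $e(C) = 2C^2$ by Chern--Weil theory. Both quantities are degrees of holomorphic line bundles on $C$:
\begin{gather*}
    e(C) = \deg T_C, \qquad C^2 = \deg N_{C/Y}.
\end{gather*}
The goal is to show that, for the metrics induced from the complex-hyperbolic metric on $Y$, the curvature form of $N_{C/Y}$ is exactly half the curvature form of $T_C$. Integrating over $C$ then gives the result.

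\emph{Step 1: splitting along $C$.} Restrict the Kähler metric $\omega$ of $Y$, which descends from the Bergman metric as in the proof of \cref{ball-quotients:torsion-free-ample}, to $T_Y|_C$. Take the $C^\infty$ orthogonal decomposition $T_Y|_C = T_C \oplus T_C^\perp$, and identify $T_C^\perp$ with $N_{C/Y}$ as a smooth Hermitian line bundle.

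The metric is Kähler, so its Levi-Civita connection is the Chern connection. Since $C$ is totally geodesic, the second fundamental form of $T_C \subset T_Y|_C$ vanishes, and hence so does its adjoint. Therefore the Chern connection of $T_Y|_C$ preserves both summands. Consequently $T_C^\perp$ is a holomorphic subbundle, the splitting is holomorphic, and $T_C^\perp \iso N_{C/Y}$ holomorphically and isometrically. Moreover, the curvatures of the Chern connections on $T_C$ and on $N_{C/Y}$ are the restrictions of the ambient curvature tensor $R$, with no second-fundamental-form correction terms (Gauss--Codazzi).

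\emph{Step 2: curvature computation.} Normalize the metric to have constant holomorphic sectional curvature $c<0$. Its curvature tensor is then
\begin{gather*}
    R(X,Y)Z = \tfrac{c}{4}\big( \langle Y,Z\rangle X - \langle X,Z\rangle Y + \langle JY,Z\rangle JX - \langle JX,Z\rangle JY + 2\langle X,JY\rangle JZ \big).
\end{gather*}
At a point of $C$ choose a unit vector $X \in T_C$ and a unit vector $N \in N_{C/Y}$. The curvature form of $T_C$ evaluated on $(X,JX)$ is governed by $\langle R(X,JX)JX, X\rangle = c$, which is the Gaussian curvature of the complex geodesic. The curvature of $N$ is governed by $\langle R(X,JX)JN, N\rangle$. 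Plugging into the formula, only the $J$-terms survive because $X \perp N, JN$, and this gives $c/2$.

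Hence the curvature $2$-form of $N_{C/Y}$ equals one half of that of $T_C$. Equivalently, $\Theta_N = \tfrac12 \Theta_{T_C}$ as $(1,1)$-forms on $C$.

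\emph{Step 3: integrate.} By Chern--Weil,
\begin{gather*}
    C^2 = \deg N_{C/Y} = \tfrac{i}{2\pi}\int_C \Theta_N = \tfrac12 \cdot \tfrac{i}{2\pi}\int_C \Theta_{T_C} = \tfrac12 \deg T_C = \tfrac12 e(C).
\end{gather*}
This is the claimed identity $e(C) = 2C^2$.

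The main obstacle is Step 2 together with the justification in Step 1: one has to confirm that total geodesicity really makes the orthogonal splitting holomorphic and compatible with the connections. One also has to check the factor $1/2$ carefully, since the constants in the curvature formula depend on sign and normalization conventions.

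As a cross-check, I would lift to $\BB^2$, where $C$ lifts to a linear slice $\BB^1 = \BB^2 \cap \{z_2 = 0\}$. There one can compute directly from the Bergman potential $-\log(1-|z|^2)$:
\begin{itemize}
    \item the metric on $T_{\BB^1}$ along the slice is $(1-|z_1|^2)^{-2}$;
    \item the metric on the normal direction $\partial_{z_2}$ is $(1-|z_1|^2)^{-1}$, the square root of the former.
\end{itemize}
This confirms the ratio $1/2$ independently of conventions.
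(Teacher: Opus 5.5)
Your proof is correct. It differs from the paper simply because the paper gives no proof of this statement: the result is quoted from \cite[B.1.I]{barthel-ea-1987-gua} (attributed to Enoki) and used as a black box, alongside Hirzebruch proportionality.

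Your argument supplies a self-contained Chern--Weil proof. Total geodesy makes the orthogonal splitting $T_Y|_C = T_C \oplus T_C^\perp$ holomorphic and parallel. Hence the Chern curvatures of $T_C$ and $N_{C/Y}$ are restrictions of the ambient curvature, with no second-fundamental-form correction. The constant-holomorphic-curvature tensor then gives $R(X,JX)X = -cJX$ and $R(X,JX)N = -\frac{c}{2}JN$ for unit $X \in T_C$ and unit $N \perp T_C$. So $\Theta_N = \frac12 \Theta_{T_C}$ pointwise, independently of how $c$ is normalized, and integrating gives $C^2 = \frac12 e(C)$.

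Your cross-check in $\BB^2$ is already a complete proof on its own, and it makes Step 1 unnecessary. The degree of $N_{C/Y}$ can be computed from any Hermitian metric, and the quotient metric induced from $Y$ is one such metric. A local lift of $C$ is, after an automorphism of $\BB^2$, a piece of $\{z_2 = 0\}$. There the holomorphic normal frame $\partial_{z_2}$ is already orthogonal to the slice. So $-\partial\bar\partial\log$ applied to $(1-|z_1|^2)^{-1}$ and to $(1-|z_1|^2)^{-2}$ gives the two curvature forms directly.

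What your route buys over the citation is transparency. It shows that $e(C) = 2C^2$ is a pointwise curvature proportionality. The ratio is the same as for a line $L$ in the compact dual $\PP^2$ (where $e(L) = 2$ and $L^2 = 1$), exactly parallel to how $c_1^2 = 3c_2$ arises for $Y$ itself.
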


\begin{definition}
For a smooth compact surface $Y$ and a smooth irreducible curve $C$, Hirzebruch defined
\begin{gather*}
    \Prop(Y) \defeq 3c_2(Y) - c_1^2(Y) \\
    \prop_Y(C) \defeq 2C^2 - e(C)
\end{gather*}
so that \cref{hirzebruch-proportionality,relative-proportionality} may be viewed as vanishing results.
\end{definition}
\medskip

By \cref{ball-quotients:finite-uniformization}, any compact ball quotient orbifold $(X,D)$ is of the form $Y / G$ for a compact torsion-free ball quotient $Y$ and a finite group $G < \Aut Y$. The aim is to use the vanishing results on $Y$ to obtain vanishing results purely in terms of the orbifold $(X,D)$. To this end, the following proposition gives formulas for the Chern numbers and intersection numbers on $Y$ in terms of the invariants of $X$ and the orbifold divisor $D$, in the case that $X$ is smooth.

{ \setlength{\mathindent}{0.96cm}
  
\begin{proposition}
\label{computation}
Let $Y$ be a smooth compact surface and suppose that $G < \Aut Y$ is a finite group such that $X \defeq Y / G$ is smooth. Let $\pi: Y \to X$ denote the quotient map and let $D = \sum_i \big( 1 - \frac{1}{b_i} \big) D_i$ denote the branch divisor of $\pi$.
\begin{parts}
\item
\label{computation:chern-numbers}
    The Chern numbers of $Y$ are:
    \begin{gather*}
        c_1^2(Y) = |G| (K_X + D)^2 \\
        c_2(Y) = |G| \l[  c_2(X) - \sum_i \l(1 - \frac{1}{b_i} \r) \l( e(D_i) - |D_\sing \cap D_i| \r) - \sum_{x \in D_\sing} \l( 1 - \frac{1}{|\Gamma_x|} \r) \r]
    \end{gather*}
    where $\Gamma_x$ denotes the orbifold local group at $x$, with respect to the orbifold $(X,D)$. In particular,
    \begin{gather*}
        \Prop(Y) = |G| \l[ \Prop(X) + \sum_i \l(1 - \frac{1}{b_i}\r) \l( K_X \cdot D_i + \l( 2 + \frac{1}{b_i} \r) D_i^2 \r) + \sum_{x \in D_\sing} \lambda_{x,D} \r]
    \end{gather*}
    where $\lambda_{x,D}$ is a coefficient defined in \eqref{def:lambda} with the property that if every irreducible component of $D$ is locally irreducible, then $\lambda_{x,D}$ depends only on the isomorphism type of $x$ as a weighted singularity.

\item
\label{computation:intersection-numbers}
    For any irreducible branch curve $D_1$ in $D$, and any irreducible component $\tilde D_1$ of $\pi^{-1}(D_1)$, the intersection numbers of $D_1$ and $\tilde D_1$ are related as follows:
\begin{gather*}
\begin{split}
    K_Y \cdot \tilde D_1
    &= \frac{|\stab(\tilde D_1)|}{b_1} \xb3[
        K_X \cdot D_1 + \l( 1 - \frac{1}{b_1} \r) D_1^2
        + \sum_{x \in D_\sing \cap D_1} \sum_{\substack{j \neq 1 \\ x \in D_j}} \l( 1 - \frac{1}{b_j} \r) \ihat_x(D_j, D_1)
        \xb3] \\
\end{split} \\
    \tilde D_1^2
    = \frac{|\stab(\tilde D_1)|}{b_1} \l[
        \frac{1}{b_1} D_1^2 - \sum_{x \in D_\sing \cap D_1} \l( \frac{b_1\rho_{x,D_1}(\rho_{x,D_1} - 1)}{|\Gamma_x|} \r)
    \r]
\end{gather*}
\vspace{-2.3ex} 

where
\begin{align*}
    &\stab(C)
    \defeq \text{setwise stabilizer of a curve $C$ under the action of $G$} \\
    &\ihat_x(C,C')
    \defeq \text{algebraic intersection number of curves $C$ and $C'$ at a point $x$} \\
    &\rho_{x,C}
    \defeq \text{number of mirrors in the preimage of a curve $C$, in the linear local model at $x$}
\end{align*}
In particular,
\vspace{-0.6ex} 
\begin{gather*}
    \prop_Y(\tilde D_1) = \frac{|\stab(\tilde D_1)|}{b_1} \xb3[ K_X \cdot D_1 + \l( 1 + \frac{2}{b_1} \r) D_1^2 + \sum_{x \in D_\sing \cap D_1} \alpha_{x,D_1} \xb3]
\end{gather*}
where $\alpha_{x,D_1}$ is a coefficient defined in \eqref{def:alpha} that depends only on the isomorphism type of $(x,D_1)$ as a singularity pair (without any local irreducibility assumptions on the components of $D$).
\end{parts}
\end{proposition}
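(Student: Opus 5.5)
Part (a) follows from \cref{lift-canonical} together with a topological Euler characteristic count; part (b) follows from pulling back intersection numbers along the finite map $\pi$ and comparing local branching data. Throughout, let $n \defeq |G|$.

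\emph{Chern numbers.} By \cref{lift-canonical}, $K_Y = \pi^*(K_X+D)$. Since $\pi$ is finite of degree $n$, this gives $c_1^2(Y) = K_Y^2 = n(K_X+D)^2$. For $c_2(Y) = e(Y)$, stratify $X$ into three pieces:
\begin{itemize}
\item the open part $X - D$, over which $\pi$ is an unbranched $n$-sheeted cover;
\item the smooth points of each $D_i$, namely $D_i - (D_i\cap D_\sing)$, over each of which the fibre has $n/b_i$ points;
\item the finitely many points $x \in D_\sing$, each with fibre of size $n/|\Gamma_x|$.
\end{itemize}
Fibre cardinality is $n/|\Gamma_x|$ by \cref{local-model-reflection-group}. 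Since Euler characteristic is additive over such a stratification, $e(Y)$ equals $n\,e(X)$ minus the defects $n(1-1/b_i)\,e(D_i - D_\sing)$ and $n(1-1/|\Gamma_x|)$. Using $e(D_i - D_\sing) = e(D_i) - |D_\sing\cap D_i|$ gives the stated formula.

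\emph{The $\Prop$ formula.} Expand $(K_X+D)^2 = K_X^2 + 2K_X\cdot D + D^2$, and write $D^2$ as a diagonal part $\sum_i (1-\frac1{b_i})^2 D_i^2$ plus cross terms $\sum_{i\neq j}(1-\frac1{b_i})(1-\frac1{b_j})\ihat_x(D_i,D_j)$ localized at the points of $D_\sing$. Next, rewrite $e(D_i)$ via adjunction. If $D_i$ is smooth, then $e(D_i) = -K_X\cdot D_i - D_i^2$. If $D_i$ is singular, the adjunction formula acquires local corrections (the $\delta$-invariant and the number of branches at each singular point), and these also localize at points of $D_\sing$.

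Collecting terms, the global part is
\begin{gather*}
3\cdot\l(1-\tfrac1{b_i}\r)\l(K_X\cdot D_i + D_i^2\r) - 2\l(1-\tfrac1{b_i}\r)K_X\cdot D_i - \l(1-\tfrac1{b_i}\r)^2 D_i^2 = \l(1-\tfrac1{b_i}\r)\l(K_X\cdot D_i + \l(2+\tfrac1{b_i}\r)D_i^2\r),
\end{gather*}
which matches the claimed coefficient. Everything else is a sum over $x \in D_\sing$, and I define $\lambda_{x,D}$ to be this local sum. It consists of $3\cdot(\text{number of components through }x) - 3(1-1/|\Gamma_x|)$, the $\delta$/branch corrections, and minus the local cross intersections. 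When every component is locally irreducible, these quantities depend only on the weighted singularity type.

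\emph{Intersection numbers.} Let $d \defeq \deg(\pi|_{\tilde D_1})$, and let $m$ be the number of components of $\pi^{-1}(D_1)$, which is $n/|\stab(\tilde D_1)|$. The components are $G$-conjugate. Since $\pi^*D_1 = b_1\sum(\text{components})$, we get $m\,d\,b_1 = n$, so $d = |\stab(\tilde D_1)|/b_1$.

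For $K_Y\cdot\tilde D_1$, use the projection formula: $K_Y\cdot\tilde D_1 = \pi^*(K_X+D)\cdot\tilde D_1 = d\,(K_X+D)\cdot D_1$. Expanding $D\cdot D_1$ gives $(1-\frac1{b_1})D_1^2$ plus the local intersections with the other $D_j$, which is the first formula.

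For $\tilde D_1^2$, compute $(\pi^*D_1)^2 = n D_1^2$, so $b_1^2\bigl(m\tilde D_1^2 + \sum_{\text{distinct}}\tilde D_1\cdot\tilde D_1'\bigr) = nD_1^2$. The cross terms are supported over points $x\in D_\sing\cap D_1$. In the linear local model, the preimage of $D_1$ consists of $\rho_{x,D_1}$ mirror lines through the origin, pairwise transverse. Hence over each such $x$ there are $n/|\Gamma_x|$ preimage points, each contributing $\rho(\rho-1)$ to the sum of pairwise intersections. Dividing by $b_1^2 m$ gives the second formula. Then $\prop_Y(\tilde D_1) = 2\tilde D_1^2 + K_Y\cdot\tilde D_1 + \tilde D_1^2$, since $e(\tilde D_1) = -K_Y\cdot\tilde D_1 - \tilde D_1^2$: the mirrors are smooth, but $\tilde D_1$ may self-intersect.

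\emph{Main obstacle.} This last point is the delicate step. If $\tilde D_1$ is not embedded (several local mirrors belong to the same global component), then the self-intersection formula and the genus both need the node corrections. One must check that these cancel in $\prop$ up to terms absorbed into $\alpha_{x,D_1}$, or else define $e$ via the normalization. I would define $\alpha_{x,D_1}$ as whatever local sum remains, and verify that it depends only on the pair type.
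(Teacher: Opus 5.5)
\textbf{Verdict: the proof has a genuine gap.} Your route is essentially the paper's:
\begin{itemize}
\item the stratified Euler characteristic count for $e(Y)$;
\item adjunction with local Milnor/$\delta$ corrections;
\item the projection formula for $K_Y\cdot\tilde D_1$;
\item the expansion of $(\pi^*D_1)^2$.
\end{itemize}
Your cross-term count sums the $\rho_{x,D_1}(\rho_{x,D_1}-1)$ ordered pairs of mirrors over all $|G|/|\Gamma_x|$ points of $\pi^{-1}(x)$, then divides by the number of components. This is a slightly more economical replacement for the paper's coset computation of $|\pi^{-1}(x)\cap\tilde D_1|$.

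However, the point you leave open as the ``main obstacle'' is where the gap lies, and your proposed workaround cannot succeed. Whether two mirrors at a point $y\in\pi^{-1}(x)$ lie on the same global component of $\pi^{-1}(D_1)$ is global information. It is not determined by the singularity pair $(x,D_1)$. So absorbing ``node corrections'' into $\alpha_{x,D_1}$ would destroy the asserted dependence of $\alpha_{x,D_1}$ on the pair type alone. Your $\rho(\rho-1)$ count also already silently assumes that distinct mirrors at $y$ belong to distinct components, which is exactly what you then doubt. Finally, $\prop_Y(\tilde D_1) = 2\tilde D_1^2 - e(\tilde D_1)$ is only defined for smooth curves in the first place.

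The missing observation is that $\tilde D_1$ is smooth. Pick $g\neq 1$ in $G$ fixing $\tilde D_1$ pointwise. Applying \cref{finite-group-action-linearization} to $\langle g\rangle$ at each fixed point shows that $\mathrm{Fix}(g)$ is locally a linear subspace in suitable coordinates. So its one-dimensional part is a smooth curve, and the irreducible curve $\tilde D_1\subseteq\mathrm{Fix}(g)$ is one of its connected components.

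Hence, in a small linearized chart at $y$, each translate $g'\tilde D_1$ through $y$ is a single mirror, and distinct translates are distinct mirrors. This gives three things:
\begin{itemize}
\item it justifies your count, since distinct mirrors meet with local intersection number $1$;
\item it gives $e(\tilde D_1) = -K_Y\cdot\tilde D_1-\tilde D_1^2$ with no correction;
\item it yields $\alpha_{x,D_1} = \sum_{j\neq 1,\ x\in D_j}(1-\frac{1}{b_j})\ihat_x(D_j,D_1) - 3b_1\rho_{x,D_1}(\rho_{x,D_1}-1)/|\Gamma_x|$, which is visibly a function of the pair.
\end{itemize}

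There is also a smaller slip in part (a). The local term coming from $3c_2(Y)/|G|$ is $3\sum_{i:\,x\in D_i}(1-\frac{1}{b_i})$, not $3$ times the number of components through $x$ (compare \eqref{def:lambda}).
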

}

\cref{computation} (whose proof is postponed to \cref{sec:computation-proof}) motivates the following orbifold definitions:

\begin{definition}[Orbifold proportionality]
\label{orbifold-prop-definitions}
For a smooth compact orbifold $(X,D)$, define
\begin{align}
    \Prop(X,D) &\defeq \Prop(X) + \sum_i \l(1 - \frac{1}{b_i}\r) \l( K_X \cdot D_i + \l( 2 + \frac{1}{b_i} \r) D_i^2 \r) + \sum_{x \in D_\sing} \lambda_{x,D} \label{def:prop-orbifold}
\end{align}
with $\lambda_{x,D}$ as defined in \eqref{def:lambda} (computed for relevant cases in \cref{table:singularity-coefficients}). For an irreducible component $D_i$ of the orbifold divisor $D$, define the invariant
\begin{gather}
    \prop_{(X,D)}(D_i) \defeq K_X \cdot D_i + \l( 1 + \frac{2}{b_i} \r) D_i^2 + \sum_{x \in D_\sing \cap D_i} \alpha_{x,D_i} \label{def:prop-curve}
\end{gather}
with $\alpha_{x,D_i}$ as defined in \eqref{def:alpha} (computed for relevant cases in \cref{table:singularity-pair-coefficients}).
\end{definition}
\smallskip

The desired vanishing results on $(X,D)$ can now be stated cleanly: Suppose that $(X,D)$ is a smooth compact ball quotient orbifold. Let $Y$ be a torsion-free ball quotient finitely uniformizing $(X,D)$. Then \cref{hirzebruch-proportionality} applies to $Y$. With the notation of \cref{computation}, 
\begin{gather*}
    \Prop(X,D) = \frac{1}{|G|} \Prop(Y) = 0.
\end{gather*}
Also, for any irreducible component $D_1$ of $D$, any irreducible component $\tilde D_1$ of $\pi^{-1}(D_1)$ is smooth and totally geodesic because any irreducible ramification curve is a connected component of the fixed set of some finite-order biholomorphic isometry. Then by \cref{relative-proportionality},
\begin{gather*}
    \prop_{(X,D)}(D_1) = \frac{b_1}{|\stab(\tilde D_1)|} \prop_Y(\tilde D_1) = 0.
\end{gather*}
In summary:
\vspace{-0.5ex} 

\begin{proposition}
\label{constraints}
    If $(X,D)$ is a smooth compact ball quotient orbifold then
    \begin{parts}[parsep=2pt]
    \item \label{constraints:orbifold}
        $\Prop(X,D) = 0$; and
    \item \label{constraints:curve}
        $\prop_{(X,D)}(D_i) = 0$ for all irreducible components $D_i$ of $D$.
    \end{parts}
\end{proposition}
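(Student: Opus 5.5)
The statement is essentially a repackaging of \cref{computation} together with \cref{hirzebruch-proportionality,relative-proportionality}, so the plan is to pass to a finite torsion-free uniformization, apply the vanishing results there, and transport them down to $(X,D)$ through the formulas of \cref{computation}.

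First, by \cref{ball-quotients:finite-uniformization}, write $(X,D) = Y/G$, where $Y = \BB^2/\Lambda$ is a torsion-free ball quotient, $\Lambda \lhd \Gamma$ has finite index, and $G = \Gamma/\Lambda$ is finite. Since $X$ is compact and $Y \to X$ is finite, $Y$ is a compact smooth surface. Because $X$ is smooth, $Y$, $G$ and $\pi$ satisfy the hypotheses of \cref{computation}, and the branch divisor of $\pi$ is $D$. Here one should check that the branch divisor of $Y \to X$ agrees with that of $\BB^2 \to X$. This holds because $\Lambda$ acts freely: the local groups of $\Gamma$ at points of $\BB^2$ inject into $G$ and act on $Y$ with the same local models.

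For \ref{constraints:orbifold}, \cref{computation}\ref{computation:chern-numbers} gives $\Prop(Y) = |G|\,\Prop(X,D)$, directly from \eqref{def:prop-orbifold}. \cref{hirzebruch-proportionality} gives $\Prop(Y)=0$. Since $|G| \neq 0$, it follows that $\Prop(X,D)=0$.

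For \ref{constraints:curve}, fix a component $D_i$ and an irreducible component $\tilde D_i$ of $\pi^{-1}(D_i)$. By \cref{computation}\ref{computation:intersection-numbers},
\begin{gather*}
    \prop_Y(\tilde D_i) = \frac{|\stab(\tilde D_i)|}{b_i}\,\prop_{(X,D)}(D_i),
\end{gather*}
so it suffices to show that $\prop_Y(\tilde D_i) = 0$, and for that it suffices to show that $\tilde D_i$ is smooth and totally geodesic, so that \cref{relative-proportionality} applies.

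This step is where the real content lies. Take a generic point of $\tilde D_i$. By \cref{local-model-reflection-group}, its stabilizer in $G$ contains a reflection $g$ of order $b_i$ whose mirror is $\tilde D_i$ near that point. The element $g$ lifts to an elliptic element $\gamma \in \Gamma$, which is a holomorphic isometry of $\BB^2$ whose fixed set contains a lift of the mirror. The fixed set of a finite-order isometry of $\BB^2$ is a complex totally geodesic submanifold (a projectivized eigenspace intersected with $\BB^2$), hence here a complex geodesic, since it has codimension one near the chosen point. Consequently, the component of $\mathrm{Fix}(g) \subset Y$ containing the point is the image of such a complex geodesic. It is therefore totally geodesic, and smooth as a fixed component of a finite-order automorphism by the linearization of \cref{finite-group-action-linearization}.

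The delicate points are these. First, $\tilde D_i$ must coincide with this fixed component, rather than being, say, an immersed curve with self-crossings. This follows because $\tilde D_i$ is irreducible and is locally a mirror of $g$ along a dense open set. Second, the images of different lifts must match up. Smoothness follows because a fixed-point component of an automorphism of a manifold is a submanifold. With $\tilde D_i$ smooth and totally geodesic, \cref{relative-proportionality} gives $\prop_Y(\tilde D_i)=0$, and hence $\prop_{(X,D)}(D_i)=0$.
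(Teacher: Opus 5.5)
Your proposal is correct and follows the paper's route. You pass to a torsion-free finite uniformization $\pi\colon Y \to X$, read off $\Prop(Y) = |G|\,\Prop(X,D)$ and $\prop_Y(\tilde D_i) = \frac{|\stab(\tilde D_i)|}{b_i}\prop_{(X,D)}(D_i)$ from \cref{computation}, and conclude via \cref{hirzebruch-proportionality,relative-proportionality}. You use, as the paper does in one line, that each ramification curve is a fixed component of a finite-order isometry and is therefore smooth and totally geodesic. The ``matching of lifts'' you flag is not an obstacle: total geodesicity is local, and each point of that fixed component is fixed by some lift of $g$ in $\Gamma$ whose fixed set in $\BB^2$ is a complex geodesic.
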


\begin{remark}
\label{rem:prop-derived}
    It is also useful to define the orbifold invariant
    \begin{align}
    \label{def:prop-derived}
        \sP(X,D) &\defeq \Prop(X,D) - \frac 12 \sum_i \l( 1 - \frac{1}{b_i} \r) \prop_{(X,D)}(D_i) \nonumber\\
                &= \Prop(X) + \frac 12 \sum_i \l( 1 - \frac{1}{b_i} \r) \l( K_X \cdot D_i + 3D_i^2 \r) + \sum_{x \in D_\sing} \theta_{x,D}
    \end{align}
    \vspace{-3.5ex}
    
    where
    \begin{gather}
    \label{def:theta}
        \theta_{x,D} \defeq \lambda_{x,D} - \frac 12 \sum_{i: x \in D_i} \l(1 - \frac{1}{b_i}\r) \alpha_{x,D_i},
    \end{gather}
    because $\theta_{x,D} = 0$ for any singularity $x \in D_\sing$ if the components of $D$ are smooth and pairwise normal-crossing (see \cref{table:singularity-coefficients}). In general, if the irreducible components of $D$ are locally irreducible, then $\theta_{x,D}$ depends only on the isomorphism type of $x$ as a weighted singularity. Note that if $(X,D)$ is a ball quotient orbifold then $\sP(X,D) = 0$ by \cref{constraints}.
\end{remark}

\paragraph{Yau's theorem and Kobayashi--Nakamura--Sakai's orbifold analogue.}

Remarkably, a converse of Hirzebruch proportionality holds: Yau showed that if a smooth compact surface $Y$ with ample canonical class satisfies $c_1^2(Y) = 3c_2(Y)$, then $Y$ is necessarily a torsion-free ball quotient \cite[Theorem 4]{yau-1977-ccn}. Together with \cref{hirzebruch-proportionality} (Hirzebruch proportionality) and \cref{ball-quotients:torsion-free-ample}, this yields a complete characterization of
\newpage 
compact torsion-free ball quotient surfaces:

\begin{theorem}[Yau {\cite{yau-1977-ccn}}, Hirzebruch {\cite{hirzebruch-1958-afu}}]
\label{ball-quotients:torsion-free}
    A smooth compact surface $Y$ is a torsion-free ball quotient if and only if $K_Y$ is ample and
    \begin{gather*}
        c_1^2(Y) = 3c_2(Y).
    \end{gather*}
\end{theorem}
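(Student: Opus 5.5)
The statement is a combination of two directions. For the forward implication, \cref{hirzebruch-proportionality} and \cref{ball-quotients:torsion-free-ample} supply everything needed, and these are already stated earlier; I would simply record that a torsion-free ball quotient surface has ample $K_Y$ and satisfies $c_1^2(Y) = 3c_2(Y)$. For this I would also recall that a compact torsion-free ball quotient is a smooth compact surface, because $\BB^2 \to Y$ is an unbranched covering. The substantive content is the converse: given a smooth compact surface $Y$ with $K_Y$ ample and $c_1^2 = 3c_2$, show that $Y \iso \BB^2/\Lambda$ with $\Lambda$ torsion-free.

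For the converse I would follow Yau's argument. First, since $K_Y$ is ample, $c_1(Y) < 0$, so the Aubin--Yau solution of the Calabi conjecture gives a unique K\"ahler--Einstein metric $\omega$ with $\Ric_\omega = -\omega$ (after normalization). Second, I would use the Chern--Weil expression of the difference $3c_2 - c_1^2$ for a K\"ahler--Einstein surface. Decomposing the curvature tensor into its scalar part and its traceless (Bochner-type) part $B$, one gets
\begin{gather*}
    3c_2(Y) - c_1^2(Y) = \frac{C}{8\pi^2}\int_Y |B|^2 \, \frac{\omega^2}{2}
\end{gather*}
for a positive constant $C$. The Ricci-traceless term drops out because the metric is Einstein. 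This yields the Miyaoka--Yau inequality, and equality forces $B \equiv 0$. In that case the metric has constant negative holomorphic sectional curvature.

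Third, I would conclude uniformization. The universal cover $\tilde Y$ with the pulled-back metric is complete, simply connected and Kähler with constant negative holomorphic sectional curvature, so by the Hawley--Igusa classification of complex space forms it is holomorphically isometric to $\BB^2$ with a multiple of the Bergman metric. The deck group $\Lambda = \pi_1(Y)$ then acts freely by holomorphic isometries, so $\Lambda < \PU_{1,2}(\CC)$ is discrete and torsion-free (a finite-order element would have a fixed point by Cartan's fixed point theorem on the ball, contradicting freeness), and it is cocompact since $Y$ is compact.

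The main obstacle is the existence of the K\"ahler--Einstein metric, which is a hard PDE result (the $C^0$ estimate for the complex Monge--Amp\`ere equation). In the context of this paper I would not reprove it. Instead I would cite \cite{yau-1977-ccn} for the converse, and spell out only the curvature identity and the space-form step, since those show exactly where the equality $c_1^2 = 3c_2$ is used.
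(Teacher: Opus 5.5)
Your proposal is correct and follows the paper's route. The paper gives no separate argument: the forward direction is exactly \cref{hirzebruch-proportionality} combined with \cref{ball-quotients:torsion-free-ample}, and the converse is Yau's theorem \cite{yau-1977-ccn}, cited without proof, so your K\"ahler--Einstein / Bochner-tensor / Hawley--Igusa outline is a faithful unpacking of that citation. One small correction to your closing aside: for $c_1(Y)<0$ the $C^0$ estimate for the Monge--Amp\`ere equation follows immediately from the maximum principle (the hard $C^0$ estimate belongs to the $c_1=0$ case), and the real work in the negative case lies in the second- and third-order estimates.
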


There is an analogous characterization for ball quotient orbifold surfaces. The \emph{orbifold Chern numbers} of a smooth compact orbifold surface are given by
    \begin{align*}
        c_1^2(X,D)
        &\defeq (K_X + D)^2 \\
        c_2(X,D)
        &\defeq c_2(X) - \sum_i \l(1 - \frac{1}{b_i} \r) \l( e(D_i) - |D_\sing \cap D_i| \r) - \sum_{x \in D_\sing} \l( 1 - \frac{1}{|\Gamma_x|} \r)
    \end{align*}
(cf. \cref{computation:chern-numbers}). Then \cref{constraints:orbifold} can be restated as follows: If $(X,D)$ is a smooth compact ball quotient orbifold then $c_1^2(X,D) = 3c_2(X,D)$. Thus \cref{constraints:orbifold} should be viewed as an orbifold analogue of Hirzebruch proportionality. 

Kobayashi--Nakamura--Sakai proved a direct analogue of Yau's theorem in the orbifold case \cite{kobayashi-1990-ucs,kobayashi-ea-1989-ncb} (see also the reference notes below \cref{ball-quotients:orbifold}). The following characterization of ball quotient orbifolds is Kobayashi--Nakamura--Sakai's result, combined with Hirzebruch proportionality for orbifolds (as stated in \cref{constraints:orbifold}) and \cref{ball-quotients:orbifold-ample}.

\begin{theorem}[Kobayashi--Nakamura--Sakai {\cite{kobayashi-1990-ucs,kobayashi-ea-1989-ncb}}, Hirzebruch {\cite{hirzebruch-1958-afu}}]
\label{ball-quotients:orbifold}
    A smooth compact orbifold surface $(X,D)$ is a ball quotient orbifold if and only if $K_X+D$ is ample and
    \begin{gather*}
        c_1^2(X,D) = 3c_2(X,D).
    \end{gather*}
\end{theorem}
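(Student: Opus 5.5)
The plan is to prove the two directions separately. The forward direction follows from results already established in this section. The converse is an appeal to the orbifold Kähler--Einstein theory of Kobayashi--Nakamura--Sakai, with some care needed to match their hypotheses to the orbifold surfaces considered here.

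\emph{Forward direction.} Suppose $(X,D) = \BB^2/\Gamma$ is a smooth compact ball quotient orbifold. Ampleness of $K_X + D$ is exactly \cref{ball-quotients:orbifold-ample}. For the Chern number identity, I would use \cref{ball-quotients:finite-uniformization} to write $(X,D) = Y/G$, where $Y$ is a compact torsion-free ball quotient and $G = \Gamma/\Lambda$ is finite. Comparing \cref{computation:chern-numbers} with the definitions of the orbifold Chern numbers gives
\begin{gather*}
    c_1^2(Y) = |G|\, c_1^2(X,D), \qquad c_2(Y) = |G|\, c_2(X,D),
\end{gather*}
and these are literally the same expressions. \cref{hirzebruch-proportionality} applied to $Y$ gives $c_1^2(Y) = 3c_2(Y)$. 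Dividing by $|G|$ yields $c_1^2(X,D) = 3c_2(X,D)$; equivalently, $\Prop(X,D) = 0$ as in \cref{constraints:orbifold}.

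\emph{Converse.} Suppose $K_X + D$ is ample and $c_1^2(X,D) = 3c_2(X,D)$. I would set up the hypotheses of Kobayashi--Nakamura--Sakai as follows.
\begin{enumerate}
\item Since $D$ has only reflection group singularities, \cref{chevalley-smooth-quotient} and \cref{reflection-group-singularities} show that $D$ determines a genuine complex orbifold structure on $X$. Its local uniformizing charts are $\CC^2 \to \CC^2/\Gamma_x \iso \CC^2$, and its branch divisor is $D$.
\item Ampleness of the orbifold canonical class $K_X + D$ then lets me invoke their existence theorem for an orbifold Kähler--Einstein metric of negative Ricci curvature. This metric is smooth in the local uniformizing charts.
\item Their orbifold Miyaoka--Yau inequality gives $c_1^2(X,D) \le 3c_2(X,D)$, where the orbifold Chern numbers are computed by Chern--Weil integrals of this metric. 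Equality holds exactly when the metric has constant holomorphic sectional curvature.
\item In the equality case, the orbifold universal cover is therefore $\BB^2$ with the Bergman metric, up to scaling. The orbifold fundamental group $\Gamma$ then acts properly discontinuously by holomorphic isometries with quotient $X$.
\item Compactness of $X$ forces $\Gamma$ to be a cocompact lattice.
\item The branch divisor of $\BB^2 \to X$ recovers the local groups $\Gamma_x$, and hence the weights $b_i$. It therefore equals $D$, so $(X,D)$ is a ball quotient orbifold.
\end{enumerate}

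\emph{Main obstacle.} The hard step is the converse, and specifically checking that the numbers $c_1^2(X,D)$ and $c_2(X,D)$ defined combinatorially above agree with the Chern--Weil orbifold Chern numbers used by Kobayashi--Nakamura--Sakai. In particular, the local correction terms $1 - 1/|\Gamma_x|$ at singular points must be the right ones. I would first try to verify this agreement through a global finite uniformization. If $(X,D)$ has a finite smooth cover $Y \to X$ with deck group $G$ and branch divisor $D$, then \cref{computation:chern-numbers} identifies $|G|\,c_i(X,D)$ with the honest Chern numbers of $Y$, which are $|G|$ times the orbifold Chern--Weil integrals over $X$. Such a cover is not known a priori, so in general I would instead localise the Gauss--Bonnet integrand. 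The $c_2$ correction at an isolated orbifold point $x$ is $-(1 - 1/|\Gamma_x|)$, because the local Euler characteristic contribution of $\CC^2/\Gamma_x$ is $1/|\Gamma_x|$ instead of $1$. The terms along each $D_i$ come from the same computation in one lower dimension.
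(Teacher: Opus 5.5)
Your proposal is correct and takes the paper's route: the forward direction combines \cref{ball-quotients:orbifold-ample} with finite uniformization, \cref{computation:chern-numbers} and \cref{hirzebruch-proportionality} (this is \cref{constraints:orbifold}), and the converse appeals to the equality case of Kobayashi--Nakamura--Sakai. The difference is one of emphasis: the paper's reference notes spend their effort matching KNS's hypotheses (log-terminal singularities on a smooth surface are exactly reflection group singularities, and ampleness of $K_X+D$ corresponds to $\overline{\mathrm{kod}}(X,D)=2$ with $(X,D)$ equal to its own log-canonical model), while it leaves implicit the Chern-number compatibility you flag, presumably because KNS's orbifold Euler number is given by the same combinatorial expression as $c_2(X,D)$; your Satake-type localisation argument is the right way to see that this agrees with the Chern--Weil integral.
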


\begin{proof}[Reference notes.] 
    \cref{ball-quotients:orbifold} is obtained as the equality case of the inequality given in \cite[Section 3.0, Theorems 1 and 2]{kobayashi-1990-ucs}, in the special case when (in the notation of \cite{kobayashi-1990-ucs}) $X$ is a smooth surface and $b_i \neq \infty$ for all weights $b_i$ of $D$. The following remarks clarify the difference between the terminology used in the theorem statement in \cite{kobayashi-1990-ucs} and that of \cref{ball-quotients:orbifold}. Quotation marks indicate vocabulary from \cite{kobayashi-1990-ucs,kobayashi-ea-1989-ncb}.
    \begin{itemize}
    \item
    A ``normal surface pair'' is a pair $(X,D)$ consisting of a normal surface $X$ with a $\QQ$-divisor $D = \sum_i \big(1 - \frac{1}{b_i}\big) D_i$ where $b_i \in \ZZ_{\geq 2} \cup \{\infty\}$.
    \item
    When $X$ is a smooth compact surface, a ``log-terminal singularity'' is equivalent to a reflection group singularity by case (1)** of the classification given in \cite[Theorem 3.1]{kobayashi-1990-ucs}, attributed to Nakamura. A ``log-terminal singularity'' is ``log-canonical''.
    \item
    $K_X + D$ is ample if and only if $\overline{\mathrm{kod}}(X,D) = 2$ (an orbifold analogue of Kodaira dimension, defined in \cite{kobayashi-ea-1989-ncb}) and $(X,D)$ is equal to its ``log-canonical model'' $(X'',D'')$ \cite{kobayashi-ea-1989-ncb}. In particular, $(X_0'',D_0'') = (X'',D'') = (X,D)$ in the case that $X$ is a smooth surface and $b_i \neq \infty$ for all $i$. \qedhere
\end{itemize}
\end{proof}

\subsection{Proof of \cref{computation}}
\label{sec:computation-proof}

Let $Y$ be a smooth compact surface and let $G < \Aut Y$ be a finite group such that $X \defeq Y / G$ is smooth. Let $\pi: Y \to X$ denote the quotient map and let $D = \sum_i \big( 1 - \frac{1}{b_i} \big) D_i$ be the branch divisor of $\pi$.

\begin{proof}[Proof of \cref{computation:chern-numbers}]
By definition, $c_1(Y) = c_1(TY) = - c_1(K_Y)$ so
\begin{gather*}
    c_1^2(Y) \defeq \ang{c_1(Y) \smile c_1(Y), [Y]} = (-K_Y)^2 = K_Y^2.
\end{gather*}
Then by \cref{lift-canonical},
\begin{gather*}
    c_1^2(Y) = K_Y^2 = \pi^*(K_X + D)^2 = |G| (K_X + D)^2.
\end{gather*}

\newpage 
    The second Chern number coincides with the topological Euler characteristic for a smooth compact surface.
    This paper uses $e(-)$ to mean the topological Euler characteristic of a space, in deference to sources such as \cite{barthel-ea-1987-gua,kobayashi-1990-ucs}. 
    Fix the following notation:
    \begin{alignat*}{2}
        &R &&\defeq \text{ramification locus of $\pi$}, \\
        &\{R_j\}_j &&\defeq \text{irreducible curve components of the ramification locus}, \\
        &R_{\sing} &&\defeq \text{set of singularities of the ramification locus}, \\
        &B &&\defeq \text{branch locus of $\pi$} = \text{support of $D$} = \pi(R).
    \end{alignat*}
    Note that $\pi(R_\sing) = D_\sing$.

    For a complex variety $X$ and a subvariety $Z$, the topological Euler characteristic has the property that
    \begin{gather} \label{inclusion-exclusion}
        e(X) = e(X - Z) + e(Z)
    \end{gather}
    (see e.g. \cite[Notes 4.13 on p.141--142]{fulton-2016-itv}). Then
    \begin{align*}
        e(Y) &= e(Y - R) + e(R) = e(Y - R) + \sum_j e(R_j - R_{\sing}) + |R_{\sing}|, \\
        e(X) &= e(X - B) + e(B) = e(X - B) + \sum_i \l( e(D_i) - |D_\sing \cap D_i| \r) + |D_\sing|.
    \end{align*}
    Since $\pi: Y - R \to X - \pi(R)$ is a degree $|G|$ unbranched cover,
    \begin{align*}
        e(Y-R) = |G| e(X - B) = |G| \bigg( e(X) - \sum_i \l( e(D_i) - |D_\sing \cap D_i| \r) - |D_\sing| \bigg).
    \end{align*}
    \vspace{-3ex}

    For a given branch curve $D_i$, the restriction
    \begin{gather*}
        \pi : \l( \pi^{-1}(D_i) - R_{\sing} \r) \to \l( D_i - D_\sing \r)
    \end{gather*}
    is an unbranched covering map of degree $\frac{|G|}{b_i}$ where $b_i$ is the branch index of $D_i$. Thus
    \begin{gather*}
        e \l( \pi^{-1}(D_i) - R_{\sing} \r)
        = \frac{|G|}{b_i} e(D_i - D_\sing)
        = \frac{|G|}{b_i} \l( e(D_i) - |D_\sing \cap D_i| \r).
    \end{gather*}
    Each irreducible ramification curve $R_j$ is contained in exactly one preimage $\pi^{-1}(D_i)$, so re-index:
    \begin{gather*}
        \sum_j e(R_j - R_{\sing})
        = \sum_i e(\pi^{-1}(D_i) - R_{\sing})
        = \sum_i \frac{|G|}{b_i} \l( e(D_i) - |D_\sing \cap D_i| \r)
    \end{gather*}
    Finally, for any point $x \in D_\sing$,
    \begin{gather*}
        |\pi^{-1}(x)| = \frac{|G|}{|\Gamma_x|}
    \end{gather*}
    by orbit-stabilizer. Then
    \begin{gather*}
        |R_{\sing}| = \sum_{x \in D_\sing} |\pi^{-1}(x)| = \sum_{x \in D_\sing} \frac{|G|}{|\Gamma_x|}.
    \end{gather*}
    Altogether,
    \begin{align*}
        e(Y)
        &= e(Y - R) + \sum_j e(R_j - R_{\sing}) + |R_{\sing}| \\[-1ex]
        &= |G| \Bigg[ e(X) - \sum_i \l(1 - \frac{1}{b_i} \r) \l( e(D_i) - |D_\sing \cap D_i| \r)
          - \sum_{x \in D_\sing} \l(1 - \frac{1}{|\Gamma_x|} \r) \Bigg].
    \end{align*}
\medskip

\newpage 
Now expand and rearrange the formula for $\Prop(Y)$ to isolate the contributions of curves and singularities:
{ \setlength{\mathindent}{0.96cm}
\begin{align*}
\frac{1}{|G|} \Prop(Y) 
&= \frac{1}{|G|} \l( 3c_2(Y) - c_1^2(Y) \r) \\
&= 3 \Bigg[ c_2(X) - \sum_i \beta_i \l( e(D_i) - |D_\sing \cap D_i| \r) - \sum_{x \in D_\sing} \l( 1 - \frac{1}{|\Gamma_x|} \r) \Bigg]
    \qquad\beta_i \defeq 1 - \frac{1}{b_i}  \\
    &\qquad\quad - \Bigg[ K_X^2 + 2 \sum_i \beta_i K_X \cdot D_i + \sum_{i,j} \beta_i \beta_j D_i \cdot D_j \Bigg] \\
&= \Prop(X) + \sum_i \Big[ - 3 \beta_i e(D_i) - 2 \beta_i K_X \cdot D_i - \beta_i^2 D_i^2 \Big]
      - \sum_{x \in D_\sing} 3\l( 1 - \frac{1}{|\Gamma_x|} \r) \\
    &\qquad\quad
      + \sum_i 3 \beta_i |D_\sing \cap D_i|
      - \sum_{i \neq j} \beta_i \beta_j D_i \cdot D_j \\
&\overset{\text{(a)}}{=}
    \Prop(X) + \sum_i \Big[ \beta_i K_X \cdot D_i + \beta_i(3 - \beta_i) D_i^2 \Big] 
      - \sum_{x \in D_\sing} 3\l( 1 - \frac{1}{|\Gamma_x|} \r) \\
    &\qquad\quad
      + \sum_i \Bigg[ 3 \beta_i |D_\sing \cap D_i| - 3\beta_i \sum_{x \in (D_i)_\sing} \mu_{x,D_i} \Bigg]
      - \sum_{i \neq j} \beta_i \beta_j D_i \cdot D_j \\
&\overset{\text{(b)}}{=}
    \Prop(X) + \sum_i \beta_i \Big( K_X \cdot D_i + (3 - \beta_i) D_i^2 \Big)
      - \sum_{x \in D_\sing} 3\l( 1 - \frac{1}{|\Gamma_x|} \r) \\
    &\qquad\quad
      + \sum_{x \in D_\sing} \sum_{i: x \in D_i} 3 \beta_i \l( 1 - \mu_{x,D_i} \r)
      - \sum_{x \in D_\sing} \sum_{i \neq j : x \in D_i \cap D_j} \beta_i \beta_j \ihat_x(D_i,D_j) \\
    &= \Prop(X) + \sum_i \beta_i \l( K_X \cdot D_i + (3 - \beta_i) D_i^2 \r) + \sum_{x \in D_\sing} \lambda_{x,D}
\end{align*}
}
where
\begin{align}
    &\mu_{x,C}
    \defeq \text{the Milnor number of $x$ as a singular point of $C$}, \nonumber \\
    &\lambda_{x,D}
    \defeq - 3 \l(1 - \frac{1}{|\Gamma_x|} \r) + \sum_{i: x \in D_i} 3 \beta_i \l( 1 - \mu_{x,D_i} \r) - \sum_{i \neq j: x \in D_i \cap D_j} \beta_i \beta_j \ihat_x(D_i, D_j).
    \label{def:lambda}
\end{align}
Here, $\ihat_x(C,C')$ denotes the algebraic intersection number of curves $C$ and $C'$ at a point $x$.

The labelled steps deserve elaboration:
\begin{enumerate}[label=(\alph*)]
\item
    For an irreducible curve $C$ on $X$, the adjunction formula states that
    \begin{gather*}
        K_X \cdot C + C^2 + 2 - 2p_a(C) = 0
    \end{gather*}
    where $p_a(C)$ is the arithmetic genus of $C$. When $C$ is smooth, this formula relates the topological Euler characteristic $e(C) = 2 - 2p_a(C)$ to intersection numbers. When $C$ is singular, correction terms are computed as follows: The normalization $C_\norm$ of a curve is its desingularization, and the geometric genus $p_g(C)$ is defined to be the genus of the smooth curve $C_\norm$. By \eqref{inclusion-exclusion}, 
    \begin{gather*}
        e(C) = e(C_\norm) - \sum_{x \in C_\sing} (r_{x,C} - 1)
        = 2 - 2p_g(C) - \sum_{x \in C_\sing} (r_{x,C} - 1)
    \end{gather*}
    where $r_{x,C}$ is the number of branches of $C$ at $x$.

    There are classical invariants $\mu_{x,C}$ (the \emph{Milnor number}) and $\delta_{x,C}$ (the \emph{$\delta$-invariant}), associated to any plane curve singularity $x \in C_\sing$, that satisfy
    \begin{gather*}
        \mu_{x,C} = 2\delta_{x,C} - r_{x,C} + 1 \\
        p_a(C) = p_g(C) + \sum_{x \in C_\sing} \delta_{x,C}
    \end{gather*}
    (see e.g. \cite[Section 10]{milnor-2016-spc} and references therein). Then: 
    \begin{align*}
        e(C)
        = 2 - 2p_g(C) - \sum_{x \in C_\sing} (r_{x,C} - 1)
        &= 2 - 2p_a(C) + \sum_{x \in C_\sing} (2 \delta_{x,C} - r_{x,C} + 1) \\
        &= - K_X \cdot C - C^2 + \sum_{x \in C_\sing} \mu_{x,C}.
    \end{align*}
\item
    Note that $(D_i)_\sing \subset D_\sing \cap D_i$ and $\mu_{x,D_i} = 0$ if $D_i$ is smooth at $x$. Then re-index the double-sum:
    \begin{align*}
        \sum_i \Bigg[ 3 \beta_i |D_\sing \cap D_i| - 3\beta_i \sum_{x \in (D_i)_\sing} \mu_{x,D_i} \Bigg]
        &= \sum_i \sum_{x \in D_\sing \cap D_i} 3 \beta_i ( 1 - \mu_{x,D_i} ) \\
        &= \sum_{x \in D_\sing} \sum_{i: x \in D_i} 3 \beta_i ( 1 - \mu_{x,D_i} ).
    \end{align*}
    The intersection form can be computed by taking the sum of local intersection multiplicities, so
    \begin{align*}
        \sum_{i \neq j} \beta_i \beta_j D_i \cdot D_j
        &= \sum_{i \neq j} \beta_i \beta_j \sum_{x \in D_i \cap D_j} \ihat_x(D_i,D_j) \\
        &= \sum_{x \in D_\sing} \sum_{i \neq j: x \in D_i \cap D_j} \beta_i \beta_j \ihat_x(D_i,D_j).
    \end{align*}
    Note that the intersection of distinct irreducible components $D_i$ and $D_j$ is necessarily in $D_\sing$.
\end{enumerate}
In general, $\lambda_{x,D}$ depends on how the irreducible components $\{D_i\}_i$ partition the irreducible germs of $D$ meeting $x$. However, if every $D_i$ is locally irreducible at $x$ then each $D_i$ defines an irreducible germ near $x$, so the summations in \eqref{def:lambda} are sums over the irreducible germs of $D$ at $x$. Therefore, if every irreducible component $D_i$ of $D$ is locally irreducible then $\lambda_{x,D}$ depends only on the isomorphism type of $x$.
\end{proof}
\medskip

\begin{proof}[Proof of \cref{computation:intersection-numbers}]
Suppose that $C \defeq D_1 \subset X$ is an irreducible branch curve in $D$ and $\tilde C$ is an irreducible component of $\pi^{-1}(C)$. Fix the following notation:
\begin{align*}
    G_{\tilde C}
    &\defeq \text{(pointwise stabilizer of $\tilde C$)}
    = \{g \in G : \tilde C \subset \Fix(g) \} \\
    H
    &\defeq \stab(\tilde C)
    = \text{(setwise stabilizer of $\tilde C$)}
    = \{g \in G : g\tilde C = \tilde C \} \\
    b
    &\defeq |G_{\tilde C}| = \text{(branch index of $C$ with respect to $\pi$)}
\end{align*}
For the counting arguments that follow, it is convenient to use double curly-brackets $\biset{\,-\,}$ to denote a bijective set constructor, meaning a set constructor where the indexing set is in bijection with the constructed set. For example, for any $y \in Y$ the orbit of $y$ under the action of $G$ is indexed by the coset space $\sfrac{G}{G_y}$, so
\begin{gather*}
    \orb_G(y) = \{gy : g \in G \} = \biset{ gy : gG_y \in \sfrac{G}{G_y} }.
\end{gather*}

The preimage $\pi^{-1}(C)$ of $C$ is the orbit of $\tilde C$ under the action of $G$ so
\begin{gather*}
    \{ \text{ irreducible components of $\pi^{-1}(C)$ } \} = \{ g\tilde C : g \in G \} = \biset{ g\tilde C : gH \in \sfrac GH }.
\end{gather*}
For each component $g\tilde C$, the restriction $\pi|_{g\tilde C} : g\tilde C \to C$ is a finite map that is an unbranched cover away from finitely many points. The Galois group of the unbranched cover is
\begin{gather*}
    \stab(g\tilde C) / G_{g\tilde C} = gHg^{-1} / gG_{\tilde C}g^{-1}.
\end{gather*}
Therefore $\pi_* (g\tilde C) = \frac{|H|}{b} C$. Since $\pi_*\pi^*C = |G|C$, it follows that
\begin{gather*}
    \pi^*C = b \sum_{gH \in \sfrac GH} g \tilde C.
\end{gather*}

Then write $K_Y \cdot \pi^*C$ in terms of $\tilde C$:
\begin{align*}
    K_Y \cdot \pi^* C
        &= b \sum_{gH \in \sfrac GH} K_Y \cdot (g \tilde C)
        = b \sum_{gH \in \sfrac GH} (gK_Y)\cdot (g \tilde C)
        = \frac{b|G|}{|H|} K_Y \cdot \tilde C
\end{align*}
On the other hand, by \cref{lift-canonical},
\begin{align*}
    K_Y \cdot \pi^* C &= \pi^*(K_X + D) \cdot \pi^* C = |G| (K_X + D) \cdot C.
\end{align*}
Therefore,
\vspace{-0.5ex}
\begin{align*}
    K_Y \cdot \tilde C
    &= \frac{|H|}{b} \Bigg( K_X \cdot C + \sum_j \l( 1 - \frac{1}{b_j} \r) D_j \cdot C \Bigg) \\
    &= \frac{|H|}{b} \Bigg(
        K_X \cdot C + \l(1 - \frac 1b \r) C^2
        + \sum_{x \in D_\sing \cap C} \sum_{\substack{j: D_j \neq C \\ x \in D_j}} \l( 1 - \frac{1}{b_j} \r) \ihat_x(D_j, C)
    \Bigg)
\end{align*}
since $D_j \cap C \subset D_\sing \cap C$ for any irreducible component $D_j$ different from $C$. As before, $\ihat_x(C,C')$ denotes the algebraic intersection number of curves $C$ and $C'$ at a point $x$.
\medskip

To obtain an expression for $\tilde C^2$, note that $(\pi^*C)^2 = |G|C^2$ and
\begin{align*}
    (\pi^* C)^2
    &= b^2 \sum_{gH,hH \in \sfrac GH} g \tilde C \cdot h \tilde C
    = \frac{b^2|G|}{|H|} \sum_{gH \in \sfrac GH} \tilde C \cdot g \tilde C
    = \frac{b^2|G|}{|H|}
        \Bigg( \tilde C^2 + \sum_{\substack{gH \in \sfrac GH \\ gH \neq H}} \tilde C \cdot g \tilde C \Bigg)
\end{align*}
\vspace{-3.5ex}

so that
\begin{gather*}
    \tilde C^2
    = \frac{|H|}{b^2} C^2 - \sum_{\substack{gH \in \sfrac GH \\ gH \neq H}} \tilde C \cdot g \tilde C.
\end{gather*}
The last summation can be written in terms of singularity data:
\vspace{0.7ex}
\begin{align*}
    \sum_{\substack{gH \in \sfrac GH \\ gH \neq H}} \tilde C \cdot g \tilde C
    = \sum_{\substack{gH \in \sfrac GH \\ gH \neq H}} \sum_{y \in \tilde C} \ihat_y(\tilde C, g \tilde C)
    &= \sum_{x \in C} \sum_{y \in \pi^{-1}(x) \cap \tilde C} \sum_{\substack{gH \in \sfrac GH \\ gH \neq H}} \ihat_y(\tilde C, g \tilde C) \\
    &\overset{\text{(a)}}{=} \sum_{x \in C} |\pi^{-1}(x) \cap \tilde C| (\rho_{x,C} - 1) \\
    &\overset{\text{(b)}}{=} \sum_{x \in D_\sing \cap C} \frac{|H| \rho_{x,C} (\rho_{x,C} - 1)}{ |\Gamma_x| } 
\end{align*}
The labelled steps (a) and (b) are justified as follows: Fix $x \in C$ and $y \in \pi^{-1}(x) \cap \tilde C$. There exists a coordinate chart $V$ (as constructed in \cref{local-model-reflection-group}) centred at $y$ such that $G_y$ acts linearly on $V$ as a complex reflection group and the restriction $\pi|_V: V \to \pi(V)$ is the quotient of $V$ by $G_y$. The ramification locus of a complex reflection group quotient map $\pi|_V$ is the union of all of the mirrors of reflections in the reflection group $G_y$ \cite[Corollary 1.6]{steinberg-1964-dei}.

Let $C' \defeq C \cap \pi(V)$. The preimage $\pi|_V^{-1}(C')$ is a subvariety of the ramification locus of $\pi|_V$, so the irreducible components of $\pi|_V^{-1}(C')$ are mirrors. Define
\begin{align*}
    \rho_{x,C}
    &\defeq \text{number of mirrors in $\pi|_V^{-1}(C')$}
\end{align*}
Note that this number does not depend on the choice of $y$ or $V$.

\begin{enumerate}[label=(\alph*)]
\item
    If $g \in G$ and $g\tilde C \cap V \neq \emptyset$, then $g\tilde C \cap V$ is a smooth subvariety of $\pi|_V^{-1}(C')$, therefore is a mirror. All mirrors in $\pi|_V^{-1}(C')$ arise in this manner since
    \begin{gather*}
        \pi|_V^{-1}(C') = \pi^{-1}(C) \cap V = \bigcup_{gH \in \sfrac GH} (g\tilde C \cap V).
    \end{gather*}
    By analytic continuation, $(g\tilde C \cap V) = (g'\tilde C \cap V)$ if and only if $g\tilde C = g'\tilde C$. Therefore,
    \begin{align*}
        \{ \text{ mirrors in $\pi|_V^{-1}(C')$ } \}
        = \biset{ g\tilde C \cap V : gH \in \sfrac GH,\ y \in g\tilde C }.
    \end{align*}
    It then follows that
    \begin{gather*}
        \sum_{\substack{gH \in \sfrac GH \\ gH \neq H}} \ihat_y(\tilde C, g \tilde C)
        = \sum_{\substack{gH \in \sfrac GH \\ gH \neq H \\ y \in g\tilde C}} \ihat_y(\tilde C \cap V, g \tilde C \cap V)
        = \rho_{x,C} - 1
    \end{gather*}
    since two distinct mirrors intersect with intersection number 1.

\item
    Define the subset
    \begin{gather*}
        S \defeq \{ g \in G : y \in g\tilde C \} = \{ g \in G : g^{-1}y \in \pi^{-1}(x) \cap \tilde C \} \subseteq G.
    \end{gather*}
    Observe that $SH \subseteq S$ and $G_yS \subseteq S$, so $S$ is a union of left $H$-cosets, and simultaneously a union of right $G_y$-cosets. By (a),
    \vspace{-0.6ex}
    \begin{gather*}
        \rho_{x,C} = |\{ \text{ mirrors in $\pi|_V^{-1}(C')$ } \}|
        = \l| \biset{ g\tilde C \cap V : gH \in \sfrac SH } \r|
        = \frac{|S|}{|H|}.
    \end{gather*}
    On the other hand,
    \begin{gather*}
        \pi^{-1}(x) \cap \tilde C = \{g^{-1} y : g \in S \} = \biset{ g^{-1} y : G_yg \in \sfracleft{S}{G_y} }.
    \end{gather*}
    Therefore,
    \begin{gather*}
        |\pi^{-1}(x) \cap \tilde C| = \frac{|S|}{|G_y|} = \frac{|H| \rho_{x,C}}{|\Gamma_x|}.
    \end{gather*}
    Finally, note that $\rho_{x,C} - 1 = 0$ if $x \notin D_\sing \cap C$.
\end{enumerate}
\medskip

Therefore, as desired:
\begin{gather*}
    \tilde C^2
    = \frac{|H|}{b^2} C^2 - \sum_{x \in D_\sing \cap C} \frac{|H| \rho_{x,C} (\rho_{x,C} - 1)}{ |\Gamma_x| }.
\end{gather*}
By adjunction (note that irreducible ramification curves such as $\tilde C$ are smooth by \cref{finite-group-action-linearization}),
\begin{align*}
    \prop_Y(\tilde C) &= K_Y \cdot \tilde C + 3 \tilde C^2 \\
    &= \frac{|H|}{b} \xb3[ K_X \cdot C + \l( 1 + \frac 2b \r) C^2 + \sum_{x \in D_\sing \cap C} \alpha_{x,C} \xb3]
\end{align*}
where
\begin{align}
\label{def:alpha}
    \alpha_{x,C} \defeq \sum_{\substack{j: D_j \neq C \\ x \in D_j}} \l( 1 - \frac{1}{b_j} \r) \ihat_x(D_j, C) - 3\l( \frac{b \rho_{x,C} (\rho_{x,C} - 1)}{ |\Gamma_x| } \r).
\end{align}
Note that $\alpha_{x,C}$ depends only on the isomorphism type of the singularity pair $(x,C)$. 
\end{proof}

\newpage 
\subsection{Singularity coefficients}
\label{sec:singularity-coefficient-tables}

\cref{table:singularity-coefficients,table:singularity-pair-coefficients} enumerate all possible singularity coefficients for an orbifold divisor $D$, under the assumption that the irreducible components of $D$ are smooth and pairwise normal-crossing. With this assumption, any singular point $x$ on $D$ is either of type $[a,b]$ or $[a,b,c]$, and the coefficients $\lambda_{x,D}$ and $\theta_{x,D}$ depend only on the isomorphism type of $x$ by \cref{computation,rem:prop-derived}, because smooth curves are locally irreducible. The notation for singularities and singularity pairs follows \cref{singularity-notation,reflection-group-singularities}.

\begin{longtblr}[
    caption={Coefficients $\lambda_T$ and $\theta_T$ (assuming that the components of $D$ are smooth and pairwise normal-crossing).},
    label={table:singularity-coefficients},
    evaluate=\fileInput
]{ll|ll|ll}
    \SetCell[c=2]{l} \text{singularity type } T &
    & \SetCell[c=2]{l} \lambda_T &
    & \SetCell[c=2]{l} \theta_T \\
\fileInput{029-09-resources/singularity-coefficients.tex}
\end{longtblr}

\vspace{-3ex}
\begin{longtblr}[
    caption={Coefficients $\alpha_{T,C}$.},
    label={table:singularity-pair-coefficients},
    evaluate=\fileInput
]{ll|l}
    \SetCell[c=2]{l} \text{singularity pair } (T,C) && \alpha_{T,C} \\
\fileInput{029-09-resources/singularity-pair-coefficients.tex}
\end{longtblr}

\newpage 

\section{Ball quotient divisors on $\PP^2$}
\label{sec:main-theorem-proof}

The purpose of this section is to prove \cref{main-theorem}.
\smallskip

\emph{Proof outline.}\enspace
\cref{P2:DQ-DH-are-ball-quotients} gives a concise proof that $D_Q$ and $D_H$ are indeed ball quotient divisors, using Kobayashi--Nakamura--Sakai's numerical characterization of ball quotient orbifolds (\cref{ball-quotients:orbifold}).

The remainder of the work is to eliminate all other smooth pairwise normal-crossing orbifold divisors, primarily using \cref{constraints,ball-quotients:orbifold-ample}. First, \cref{P2:reduce-to-lines} reduces the problem to orbifold divisors that are line arrangements. Then \cref{P2:one-line} constrains the possible weights---a line in a ball quotient divisor with $\geq 5$ lines must have weight either 2 or 3, and weight 3 only occurs in a couple of exceptional cases. It then suffices to handle the following cases:
\begin{enumerate}
\item
line arrangements with $\geq 5$ lines where not all weights are 2, therefore featuring the exceptional cases from \cref{P2:one-line} (\cref{P2:lines-not-weight-2});
\item
line arrangements where all weights are 2 (\cref{P2:lines-weight-2}); and
\item
line arrangements with $\leq 4$ lines (\cref{P2:lines-small-cases}).
\end{enumerate}   
Then \cref{main-theorem} follows from \cref{P2:lines-weight-2,,P2:lines-not-weight-2,,P2:lines-small-cases}.
\smallskip

\emph{Notation.}\enspace
Within this section, the coefficients $\lambda_{x,D}$ and $\theta_{x,D}$ will be written as $\lambda_x$ and $\theta_x$ respectively (since $D$ is always assumed to have smooth and pairwise normal-crossing components), and $\prop_D(-)$ will be understood to mean $\prop_{(\PP^2,D)}(-)$, to reduce notational clutter. 
\smallskip

The following facts about $\PP^2$ are elementary and will be used freely without further comment: The divisor class group of $\PP^2$ is isomorphic to $\ZZ$ and generated by the hyperplane class $H$, which has self-intersection $H^2 = 1$. The canonical divisor of $\PP^2$ is linearly equivalent to $-3H$, the Euler characteristic of $\PP^2$ is $e(\PP^2) = 3$, and so $\Prop \PP^2 = 3 e(\PP^2) - K_{\PP^2}^2 = 0$.

\begin{proposition}
\label{P2:DQ-DH-are-ball-quotients}
    The following weighted line arrangements on $\PP^2$ (first named in the introduction and illustrated in \cref{fig:line-arrangements}) are ball quotient divisors:
    \begin{enumerate}[label={\normalfont(\arabic*)}]
    \item
        $D_Q$, the complete quadrilateral (6 lines, 4 triple points) with three lines of weight 2 meeting in a triple point and the three remaining lines with weight 3; and
    \item
        $D_H$, the dual Hesse arrangement (9 lines, 12 triple points) with all lines having weight 2.
    \end{enumerate}
\end{proposition}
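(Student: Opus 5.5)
The plan is to verify the two conditions of \cref{ball-quotients:orbifold} directly for each arrangement: ampleness of $K_{\PP^2}+D$, and the orbifold proportionality $c_1^2(\PP^2,D) = 3c_2(\PP^2,D)$. Since $\operatorname{Pic}\PP^2 = \ZZ H$, ampleness reduces to checking that the degree of $K_{\PP^2}+D$ is positive. The equality then becomes a finite bookkeeping computation. It needs three inputs: the singularity configuration of each arrangement, the number of singular points on each line, and the orders $|\Gamma_x|$ of the local groups read off from \cref{reflection-group-singularities}.

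\emph{Step 1 (singularity configurations).} For $D_Q$, I take the weight-3 lines to be $X,Y,Z$ and the concurrent weight-2 lines to be $X-Y$, $X-Z$, $Y-Z$, which meet at $(1:1:1)$. Each vertex of the triangle $XYZ=0$ lies on exactly one weight-2 line, for example $(0:0:1)$ lies on $X-Y$. This gives the configuration $[2,2,2]\,[2,3,3]^3\,[2,3]^3$ recorded in \cref{singularity-notation}. Every line then carries exactly $3$ singular points, so $e(D_i) - |D_\sing\cap D_i| = -1$ for each line.

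For $D_H$, there are $12$ triple points, and each accounts for $3$ of the $\binom{9}{2}=36$ pairs of lines. Hence every intersection is a triple point, the configuration is $[2,2,2]^{12}$, and each line contains $4$ singular points, giving $e(D_i)-4=-2$.

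\emph{Step 2 (local group orders).} From \cref{reflection-group-singularities} I read off the following. The singularity $[2,2,2]$ has local group $G(4,2,2)$, of order $4^2\cdot 2!/2 = 16$. The singularity $[2,3,3]$ has local group Shephard--Todd $G_7$, of order $144$. The normal crossing $[2,3]$ has local group $\ZZ/2\x\ZZ/3$, of order $6$. Getting these orders right is the step I expect to be the main (if mundane) obstacle: an error here, for instance taking $|G(4,2,2)|=8$, breaks the equality.

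\emph{Step 3 (the numbers).} For $D_Q$, the class of $K+D$ is $(-3 + 3\cdot\frac23 + 3\cdot\frac12)H = \frac12 H$, which is ample, and $c_1^2 = \frac14$. The second orbifold Chern number is
\begin{gather*}
c_2 = 3 + \l(3\cdot\tfrac23 + 3\cdot\tfrac12\r) - \l(\tfrac{15}{16} + 3\cdot\tfrac{143}{144} + 3\cdot\tfrac56\r) = \tfrac{13}{2} - \tfrac{308}{48} = \tfrac1{12},
\end{gather*}
so $3c_2 = c_1^2$.

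For $D_H$, the class of $K+D$ is $(-3+\frac92)H = \frac32 H$, which is ample, and $c_1^2 = \frac94$. Here
\begin{gather*}
c_2 = 3 + 9\cdot\tfrac12\cdot 2 - 12\cdot\tfrac{15}{16} = \tfrac34,
\end{gather*}
so again $3c_2 = c_1^2$.

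\cref{ball-quotients:orbifold} then gives the result for both arrangements. As a cross-check, I can instead confirm $\Prop(\PP^2,D)=0$ and $\prop_D(L)=0$ for each line $L$ using \cref{table:singularity-coefficients,table:singularity-pair-coefficients}. For $D_H$, one can alternatively appeal to \cref{covering-ball-quotient} via the cover $f_3$.
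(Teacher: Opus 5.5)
Your proposal is correct and follows the paper's route: check that $K_{\PP^2}+D$ is ample and that the orbifold proportionality equality holds, then invoke \cref{ball-quotients:orbifold}. The only difference is one of bookkeeping, and your numbers check out ($\frac14 = 3\cdot\frac1{12}$ and $\frac94 = 3\cdot\frac34$): you compute $c_1^2(\PP^2,D)$ and $c_2(\PP^2,D)$ directly from their definitions, whereas the paper evaluates the equivalent quantity $\Prop(\PP^2,D) = -\sum_i \beta_i^2 + \sum_x \lambda_x$ using the tabulated $\lambda$-coefficients.
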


\begin{proof}
This is a straightforward application of \cref{ball-quotients:orbifold}. All of the singularities on $D_Q$ and $D_H$ are reflection group singularities, so $D_Q$ and $D_H$ are both orbifold divisors. When $D$ is an orbifold divisor on $\PP^2$ consisting of lines, since $\Prop(\PP^2) = 0$,
\begin{align*}
    \Prop(\PP^2, D)
    &= - \sum_i \l(1 - \frac{1}{b_i} \r)^2 + \sum_{x \in D_\sing} \lambda_x.
\end{align*}
\begin{enumerate}[label=(\arabic*)]
\item
    First,
    \vspace{-4ex} 
    
    \begin{gather*}
        K_{\PP^2} + D_Q \equiv_\lin -3H + 3 \cdot \frac 23 H + 3 \cdot \frac 12 H = \frac 12 H
    \end{gather*}
    is ample. The singularity configuration on $D_Q$ is $[2,2,2]\,[2,3,3]^3\,[2,3]^3$ so, using \cref{table:singularity-coefficients},
    \begin{gather*}
        \sum_{x \in (D_Q)_\sing} \lambda_x
        = \lambda_{[2,2,2]} + 3 \lambda_{[2,3,3]} + 3 \lambda_{[2,3]}
        = \frac{3}{16} + 3 \cdot \frac{43}{144} + 3 \cdot \frac 13 = \frac{100}{48}, \\
        \Prop(\PP^2,D_Q)
        = - 3 \cdot \frac 14 - 3 \cdot \frac 49 + \frac{100}{48}
        = - \frac{36}{48} - \frac{64}{48} + \frac{100}{48} = 0.
    \end{gather*}
\item
    Observe that 
    \begin{gather*}
        K_{\PP^2} + D_H \equiv_\lin -3H + 9 \cdot \frac 12 H = \frac 32 H
    \end{gather*}
    is ample. The singularity configuration on $D_H$ is $[2,2,2]^{12}$ so
    \begin{gather*}
        \sum_{x \in (D_H)_\sing} \lambda_x
        = 12 \lambda_{[2,2,2]} = 12 \cdot \frac{3}{16} = \frac 94, \\
        \Prop(\PP^2,D_H)
        = - 9 \cdot \frac 14 + \frac 94 = 0.
    \end{gather*}
\end{enumerate}
By \cref{ball-quotients:orbifold}, both $D_Q$ and $D_H$ are ball quotient divisors on $\PP^2$.
\end{proof}

\begin{proposition}
\label{P2:reduce-to-lines}
    If $D$ is a ball quotient divisor on $\PP^2$ whose components are smooth and pairwise normal-crossing then $D$ is a line arrangement (every irreducible component of $D$ is a line).
\end{proposition}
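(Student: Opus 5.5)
The plan is to apply the derived invariant $\sP(X,D)$ of \cref{rem:prop-derived} rather than $\Prop$ or $\prop$ separately. The advantage is that its singular-point contributions $\theta_{x,D}$ all vanish under the standing hypothesis that the components are smooth and pairwise normal-crossing. Suppose $D = \sum_i \big(1 - \frac{1}{b_i}\big) D_i$ is a ball quotient divisor on $\PP^2$ of this kind. By \cref{constraints}, both $\Prop(\PP^2,D) = 0$ and $\prop_D(D_i) = 0$ for every $i$. Hence, as noted in \cref{rem:prop-derived}, $\sP(\PP^2,D) = 0$.

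Next I would evaluate $\sP(\PP^2,D)$ directly from \eqref{def:prop-derived}. Recall $\Prop(\PP^2) = 0$. Let $d_i$ be the degree of $D_i$, so that $D_i \equiv_\lin d_i H$. Then $K_{\PP^2} \cdot D_i = -3d_i$ and $D_i^2 = d_i^2$. Because the components are smooth and pairwise normal-crossing, every singular point of $D$ has type $[a,b]$ or $[a,b,c]$, and for these $\theta_x = 0$ by \cref{table:singularity-coefficients}. The sum over $D_\sing$ therefore drops out, and we get
\begin{gather*}
    0 = \sP(\PP^2,D) = \frac 12 \sum_i \l(1 - \frac{1}{b_i}\r)\l(-3d_i + 3d_i^2\r) = \frac 32 \sum_i \l(1 - \frac{1}{b_i}\r) d_i(d_i - 1).
\end{gather*}
Each summand is nonnegative, since $b_i \geq 2$ and $d_i \geq 1$. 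So every summand must vanish. Since $1 - \frac{1}{b_i} > 0$, this forces $d_i = 1$ for all $i$, i.e. every component is a line.

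The only real input is the vanishing of $\theta_T$ for $T = [a,b]$ and $T = [a,b,c]$. If I wanted to check this rather than cite the table, I would compute from \eqref{def:lambda}, \eqref{def:alpha} and \eqref{def:theta}. For $[a,b]$, we have $|\Gamma_x| = ab$, $\mu = 0$, $\ihat = 1$ and $\rho = 1$. For $[a,b,c]$, I would use the group orders and mirror counts $\rho_{x,D_i}$ from the Shephard--Todd data in \cref{reflection-group-singularities}. This case-by-case verification, particularly of the mirror counts for the triple-point groups, is the step I expect to need the most care. Everything after it is a one-line positivity argument.
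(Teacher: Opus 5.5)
Your proposal is correct and follows essentially the same route as the paper: evaluate $\sP(\PP^2,D)$, use $\theta_T = 0$ for the only possible singularity types $[a,b]$ and $[a,b,c]$ to get $\frac 32 \sum_i \big(1 - \frac{1}{b_i}\big) d_i(d_i-1)$, and conclude $d_i = 1$ from nonnegativity of each summand. Your optional check that $\theta_T = 0$ does hold; the paper simply cites \cref{table:singularity-coefficients} for this.
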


\begin{proof}
    Let $D = \sum_i \big(1 - \frac{1}{b_i} \big)D_i$ be an orbifold divisor on $\PP^2$ whose components are smooth and pairwise normal-crossing. For each irreducible curve $D_i$, write $d_i \geq 1$ for the degree of the curve $D_i$ so that $D_i \equiv_\lin d_i H$. Compute the invariant $\sP(\PP^2,D)$ as defined in \eqref{def:prop-derived}:
    \begin{gather}
    \label{::prop-derived}
        \sP(\PP^2,D) = \frac 32 \sum_i \l( 1 - \frac{1}{b_i} \r) d_i(d_i - 1) + \sum_{x \in D_\sing} \theta_x.
    \end{gather}
    Since the components of $D$ are smooth and pairwise normal-crossing, every singularity of $D$ is of type $[a,b]$ or type $[a,b,c]$ by the classification of reflection group singularities (\cref{reflection-group-singularities}). In both cases $\theta_T = 0$. Then \eqref{::prop-derived} is a sum of non-negative terms, since $d_i(d_i - 1) \geq 0$ and $\big(1 - \frac{1}{b_i}\big) > 0$ for every $i$. Therefore \eqref{::prop-derived} vanishes if and only if $d_i = 1$ for every $i$.

    If $D$ is a ball quotient divisor then $\sP(\PP^2,D) = 0$ by \cref{constraints} so $D$ is a line arrangement.
\end{proof}

\begin{proposition}
\label{P2:one-line}
Let $D$ be an orbifold divisor on $\PP^2$ consisting of $k \geq 5$ lines. If $\prop_D(D_1) = 0$ then either $b_1 = 2$, or $b_1 = 3$ with one of the following exceptional singularity configurations: 
\begin{gather*}
\begin{array}{@{}ll}
     {[2,\mathbf{3}]\ [2,\mathbf{3},3]^2}   & (k = 6) \\
     {[2,2,\mathbf{3}]\ [2,\mathbf{3},3]^2} & (k = 7)
\end{array}
\end{gather*}
\end{proposition}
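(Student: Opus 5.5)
The plan is to write $\prop_D(D_1)$ out explicitly for a line, using the pair coefficients of \cref{table:singularity-pair-coefficients}, and then turn the vanishing condition into a finite enumeration.

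\textbf{Reducing to local contributions.} For a line $D_1$ we have $K_{\PP^2}\cdot D_1 = -3$ and $D_1^2 = 1$. So \eqref{def:prop-curve} gives
\begin{gather*}
    \prop_D(D_1) = -2 + \frac{2}{b_1} + \sum_{x \in D_\sing \cap D_1} \alpha_{x,D_1}.
\end{gather*}
By hypothesis every singular point of $D$ on $D_1$ is of type $[a,\mathbf{b_1}]$ or $[a,\mathbf{b_1},c]$. I would split each coefficient using \eqref{def:alpha} as
\begin{gather*}
    \alpha_{x,D_1} = \sum_{j \neq 1,\ x \in D_j} \Big(1 - \frac{1}{b_j}\Big) - \varepsilon_x,
    \qquad
    \varepsilon_x \defeq \frac{3 b_1 \rho_{x,D_1}(\rho_{x,D_1}-1)}{|\Gamma_x|}.
\end{gather*}
The intersection multiplicities are all $1$ because the lines meet transversally.

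Each of the other $k-1$ lines meets $D_1$ in exactly one point. Hence the first parts sum to $\sum_{j\neq 1}(1-1/b_j) \ge (k-1)/2 \ge 2$. The vanishing condition then becomes
\begin{gather*}
    \sum_{j \neq 1}\Big(1 - \frac 1{b_j}\Big) + \frac{2}{b_1} - 2 = \sum_{x \text{ triple on } D_1} \varepsilon_x ,
\end{gather*}
since $\varepsilon_x = 0$ at normal crossings ($\rho = 1$).

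\textbf{Bounding the corrections.} The next step is to tabulate $\varepsilon_x$ for every triple-point pair. The relevant types are $[2,2,c]$ with $D_1$ of weight $2$ or $c$, and $[2,3,3]$, $[2,3,4]$, $[2,3,5]$ with each choice of bold entry. For these I would take $|\Gamma_x|$ and $\rho$ from \cref{reflection-group-singularities} and the Shephard--Todd data.

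I then compare, point by point, the contribution $(1-1/b_j)+(1-1/b_{j'}) - \varepsilon_x$ of a triple point (which absorbs two other lines) against the two lines' naive contribution. The expected outcome is that a triple point can reduce the left side by only a bounded amount, and that this amount is largest when $b_1$ is small.

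Each triple point uses up two of the $k-1$ other lines, so $D_1$ carries at most $\lfloor (k-1)/2\rfloor$ triple points. The left side grows roughly like $(k-1)/2$ or more, while the total available correction is controlled. From this I would derive the following.

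\begin{itemize}
\item If $b_1 \ge 4$, the equation has no solution. The constraint that $[a,b_1,c]$ must be a reflection-group singularity already forces the other weights at triple points through $D_1$ to be $2$ or $3$ (or $2,2$), so only a few cases need checking.
\item If $b_1 = 3$, the triple points on $D_1$ are among $[2,\mathbf 3,3]$, $[2,\mathbf 3,4]$, $[2,\mathbf 3,5]$ and $[2,2,\mathbf 3]$, and the equation pins down the configuration.
\end{itemize}

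\textbf{Enumeration and expected obstacle.} For $b_1 = 3$ I would enumerate configurations by the numbers of triple points of each type and of normal crossings $[a,\mathbf 3]$, subject to $k \ge 5$. I expect exactly $[2,\mathbf 3]\,[2,\mathbf 3,3]^2$ ($k=6$) and $[2,2,\mathbf 3]\,[2,\mathbf 3,3]^2$ ($k=7$) to survive.

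The main obstacle is this bookkeeping. The pair coefficients have to be computed correctly, including $\rho$ when $D_1$ is the distinguished weight at a $[2,2,c]$ point. The enumeration also has to be organized so that it is visibly finite. I would organize it as an inequality: write every other line's weight as $b_j \ge 2$, which gives a lower bound per triple point, so the number of normal crossings and the weights at them are bounded. Then I would finish the remaining cases by hand.
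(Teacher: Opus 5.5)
Your plan is correct and is essentially the paper's proof. The per-point net contribution $(1-\frac{1}{b_j})+(1-\frac{1}{b_{j'}})-\varepsilon_x$ that you propose to compare is just $\alpha_{x,D_1}$. The paper uses $q_2+2q_3=k-1$, the bound $\alpha_{[a,\mathbf{b_1}]}\ge\frac12$, and the minimum $\hat\alpha_3(b_1)>0$ of $\alpha$ over the admissible triple-point pairs to get an explicit threshold on $k$; it then disposes of $k=5$ for all $b_1\ge 3$ and enumerates $k=6,7$ with $b_1=3$.

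When you carry it out, two points matter. First, what must be bounded below is this per-point net: the total correction $\sum_x\varepsilon_x$ is not bounded independently of $k$. Second, the case $k=5$ needs the reverse estimate $\alpha_{x,D_1}\le 1-\frac{3}{2b_1}$, uniform in $b_1$, because there $\prop_D(D_1)<0$ rather than $>0$.
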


\begin{proof}
Let $q_2$ and $q_3$ denote the numbers of singularities of type $[a,b]$ and $[a,b,c]$ respectively on $D_1$. Since $D_1$ meets every other line exactly once, it follows that
\begin{gather*}
    q_2 + 2q_3 = k-1.
\end{gather*}
Let $\sT_{b_1}$ denote the collection of singularity pairs $(T,C)$ where $T$ is a triple point singularity $[a,b,c]$ and $C$ is a curve of weight $b_1$. Then define
\begin{gather*}
    \hat\alpha_3 = \min_{(T,C) \in \sT_{b_1}} \alpha_{T,C}
\end{gather*}
so that
\vspace{-0.5ex} 
\begin{gather*}
    \sum_{x \in D_\sing \cap D_1} \alpha_{x,D_1}
    \geq \frac 12 q_2 + \hat\alpha_3 q_3 = \frac 12 q_2 + \frac 12 \hat\alpha_3(k-1-q_2)
    = \frac 12 (1-\hat\alpha_3)q_2 + \frac 12 \hat\alpha_3(k-1) \\
    \prop_D(D_1) = -2 + \frac{2}{b_1} + \sum_{x \in D_\sing \cap D_1} \alpha_{x,D_1}
    \geq -2 + \frac{2}{b_1} + \frac 12 (1-\hat\alpha_3)q_2 + \frac 12 \hat\alpha_3(k-1) 
\end{gather*}
Then a sufficient condition for $\prop_D(D_1) > 0$ is if
\begin{gather*}
    k > K(b_1,q_2) \defeq \frac{4}{\hat\alpha_3} \l( 1 - \frac{1}{b_1} \r) - \l( \frac{1}{\hat\alpha_3} - 1 \r) q_2 + 1.
\end{gather*}
Note that $K(b_1,q_2)$ is decreasing with respect to $q_2$ because $0 < \hat\alpha_3 < 1$ always.

By inspecting \cref{table:singularity-pair-coefficients}, compute:
{\everymath={\displaystyle}
\begin{gather*}
\arraycolsep=2ex
\begin{array}{c c c @{\hspace{1ex}} l c @{\hspace{1ex}} l c @{\hspace{1ex}} l}
    b_1 & \hat\alpha_3 & K(b_1,0) & & K(b_1,1) & & K(b_1,2) & \\[0.8ex]
    \hline\\[-1.2ex]
    \geq 6 & 1 - \frac{3}{2b_1} & \frac{10b_1 - 11}{2b_1 - 3} & < 6 & \frac{10b_1 - 14}{2b_1 - 3} & < 6 & \frac{10b_1 - 17}{2b_1 - 3} & < 5 \\[2.5ex]
    5 & \frac{37}{60} & \frac{229}{37} & < 7 & \frac{206}{37} & < 6 & \frac{183}{37} & < 5 \\[2.5ex]
    4 & \frac{13}{24} & \frac{85}{13} & < 7 & \frac{74}{13} & < 6 & \frac{63}{13} & < 5 \\[2.5ex]
    3 & \frac{7}{20} & \frac{181}{21} & < 9 & \frac{142}{21} & < 7 & \frac{103}{21} & < 5
\end{array}
\end{gather*}
}

Note that if $k-1 = q_2 + 2q_3$ is odd then $q_2 \geq 1$. It then follows from the above computations that
\begin{itemize}
\item
    if $k \geq 6$ and $b_1 \geq 4$ then $\prop_D(D_1) > 0$;
\item
    if $k \geq 8$ and $b_1 \geq 3$ then $\prop_D(D_1) > 0$; and
\item
    if $k \geq 5$, $b_1 \geq 3$, and $q_2 \geq 2$ then $\prop_D(D_1) > 0$.
\end{itemize}

The remaining cases are handled as follows:

\underline{Case 1}:
$k = 5$, $b_1 \geq 3$, $q_2 \leq 1$. Since $k-1 = q_2 + 2q_3 = 4$ is even and $q_2 \leq 1$, it follows that $q_2 = 0$ and $q_3 = 2$. 
    Note that $\alpha_{T,C} \leq 1 - \frac{3}{2b_1}$ for any triple point $(T,C) \in \sT_{b_1}$. Therefore,
    \begin{gather*}
        \prop_D(D_1) = -2 + \frac{2}{b_1} + \alpha_{x_1,D_1} + \alpha_{x_2,D_1} \leq -2 + \frac{2}{b_1} + 2 \l( 1 - \frac{3}{2b_1} \r) = - \frac{1}{b_1} < 0.
    \end{gather*}

\underline{Case 2}: $k = 6$, $b_1 = 3$, $q_2 \leq 1$.
    Since $k-1 = q_2 + 2q_3 = 5$ is odd and $q_2 \leq 1$, it follows that $q_2 = 1$ and $q_3 = 2$.
    Let $D_2$ be the line meeting $D_1$ in a normal-crossing.
    Let $x_1$ and $x_2$ denote the triple points on $D_1$.
    For weight $b_1 = 3$, the possible triple point coefficients are:
    \begin{gather*}
        \alpha_{[2,2,\mathbf{3}]} = \frac 12,\quad
        \alpha_{[2,\mathbf{3},3]} = \frac{5}{12},\quad
        \alpha_{[2,\mathbf{3},4]} = \frac 38,\quad
        \alpha_{[2,\mathbf{3},5]} = \frac{7}{20}.
    \end{gather*}
    If $b_2 \geq 3$ then
    \begin{gather*}
        \prop_D(D_1)
        = - \frac 13 - \frac{1}{b_2} + \alpha_{x_1,D_1} + \alpha_{x_2,D_1}
        \geq - \frac 13 - \frac 13 + \frac{7}{20} + \frac{7}{20} = \frac{1}{30} > 0.
    \end{gather*}
    Therefore assume that $b_2 = 2$, so
    \begin{gather*}
        \prop_D(D_1)
        = - \frac 13 - \frac 12 + \alpha_{x_1,D_1} + \alpha_{x_2,D_1}
        = - \frac 56 + \alpha_{x_1,D_1} + \alpha_{x_2,D_1}.
    \end{gather*}
    Enumerate all possible pairs of triple points:
    \begin{align*}
        & \alpha_{[2,2,\mathbf{3}]} + \alpha_{[2,2,\mathbf{3}]} = 1
        && \alpha_{[2,\mathbf{3},3]} + \alpha_{[2,\mathbf{3},3]} = \frac 56
        && \alpha_{[2,\mathbf{3},4]} + \alpha_{[2,\mathbf{3},4]} = \frac 34 \\
        & \alpha_{[2,2,\mathbf{3}]} + \alpha_{[2,\mathbf{3},3]} = \frac{11}{12}
        && \alpha_{[2,\mathbf{3},3]} + \alpha_{[2,\mathbf{3},4]} = \frac{19}{24}
        && \alpha_{[2,\mathbf{3},4]} + \alpha_{[2,\mathbf{3},5]} = \frac{29}{40} \\
        & \alpha_{[2,2,\mathbf{3}]} + \alpha_{[2,\mathbf{3},4]} = \frac 78
        && \alpha_{[2,\mathbf{3},3]} + \alpha_{[2,\mathbf{3},5]} = \frac{23}{30}
        && \alpha_{[2,\mathbf{3},5]} + \alpha_{[2,\mathbf{3},5]} = \frac{7}{10} \\
        & \alpha_{[2,2,\mathbf{3}]} + \alpha_{[2,\mathbf{3},5]} = \frac{17}{20}
    \end{align*}
    The only configuration satisfying $\prop_D(D_1) = 0$ is
    \begin{gather*}
        [2,\mathbf{3}]\,[2,\mathbf{3},3]^2.
    \end{gather*}

\underline{Case 3}: $k = 7$, $b_1 = 3$, $q_2 \leq 1$.
    Since $k-1 = q_2 + 2q_3 = 6$ is even and $q_2 \leq 1$, it follows that $q_2 = 0$ and $q_3 = 3$.  Let $x_1$, $x_2$, and $x_3$ denote the three triple points.

    If all three singularities are different from $[2,2,\mathbf{3}]$ then
    \begin{gather*}
        \prop_D(D_1) = - \frac 43 + \alpha_{x_1,D_1} + \alpha_{x_2,D_1} + \alpha_{x_3,D_1} \leq - \frac 43 + 3 \cdot \frac{5}{12} = - \frac 43 + \frac 54 = - \frac{1}{12} < 0.
    \end{gather*}
    Thus assume that $x_1$ is of type $[2,2,\mathbf{3}]$, in which case
    \begin{gather*}
        \prop_D(D_1) = - \frac 43 + \frac 12 + \alpha_{x_2,D_1} + \alpha_{x_3,D_1} = - \frac 56 + \alpha_{x_2,D_1} + \alpha_{x_3,D_1}.
    \end{gather*}
    By referencing the finite enumeration of pairs of triple points in the previous case, the only configuration satisfying $\prop_D(D_1) = 0$ is
    \begin{gather*}
        [2,2,\mathbf{3}]\,[2,\mathbf{3},3]^2. \qedhere
    \end{gather*}
\end{proof}

\begin{proposition}
\label{P2:lines-not-weight-2}
Let $D$ be an orbifold divisor on $\PP^2$ consisting of $k \geq 5$ lines, with at least one line of weight $\geq 3$. If $\prop_D(D_i) = 0$ for all $D_i$ then $D$ is (up to projective automorphisms) the ball quotient divisor $D_Q$ from \cref{P2:DQ-DH-are-ball-quotients}.
\end{proposition}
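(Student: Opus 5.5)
Fix a line $D_1$ of weight $b_1 \geq 3$. The plan is to start from \cref{P2:one-line}, which applies since $k \geq 5$ and $\prop_D(D_1) = 0$: it forces $b_1 = 3$ and exactly one of two local pictures along $D_1$. Either $k = 6$ and $D_1$ carries $[2,\mathbf{3}]\,[2,\mathbf{3},3]^2$, or $k = 7$ and $D_1$ carries $[2,2,\mathbf{3}]\,[2,\mathbf{3},3]^2$. In both cases every weight-$3$ line has exactly two triple points of type $[2,3,3]$. At each of these the third line also has weight $3$, so the weight-$3$ lines spread out from $D_1$. The whole proof is then a finite propagation argument: apply \cref{P2:one-line} to every weight-$3$ line, and use that every other line has weight $2$ (again by \cref{P2:one-line}, since the only allowed weights are $2$ and $3$).

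\emph{Case $k=7$.} Let $x_2, x_3$ be the two $[2,3,3]$ points on $D_1$. Each contains one other weight-$3$ line, call them $D_a$ and $D_b$; these are distinct, since two lines meet only once. Applying \cref{P2:one-line} to $D_a$ shows that $D_a$ also meets a $[2,2,3]$ point and two $[2,3,3]$ points. The weight-$3$ lines then form a graph in which every vertex has degree $2$, with edges given by the $[2,3,3]$ points, so the graph is a union of cycles. Counting incidences, each of the $k-1 = 6$ other lines passes through $D_1$ at one of its three triple points. The weight-$3$ lines on $D_1$ are exactly $D_a$ and $D_b$, so there are at least $3$ weight-$3$ lines.

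Suppose the weight-$3$ lines form a triangle $D_1, D_a, D_b$. Each vertex of the triangle carries two weight-$2$ lines through its $[2,2,3]$ point and one through each of its $[2,3,3]$ points. Counting the lines that must pass through $D_a$'s $[2,2,3]$ point gives more than $7$ lines, or forces coincidences incompatible with every pair of lines meeting exactly once. Longer cycles are excluded in the same way because $k = 7$. To pin this down I would simply enumerate incidences: $3m$ weight-$3$ incidences among $m$ weight-$3$ lines, with $7 - m$ weight-$2$ lines each meeting everything once. The expected outcome is a contradiction, or else $\prop_D$ of some weight-$2$ line being nonzero, computed from \cref{table:singularity-pair-coefficients} with $b_i = 2$.

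\emph{Case $k=6$.} The same propagation shows the weight-$3$ lines form a triangle $D_1, D_2, D_3$ (a $3$-cycle), which needs only $3$ lines. The triangle's sides pairwise meet at three $[2,3,3]$ points. Each of those points also carries one weight-$2$ line, giving three weight-$2$ lines $L_1, L_2, L_3$. Each $D_i$ has exactly one normal crossing $[2,\mathbf{3}]$ with a weight-$2$ line. So the $L_j$ meet the $D_i$ only at the triangle's triple points, plus one further crossing per side. The weight-$2$ lines must then meet one another at a common triple point, or at three double points. Here I would apply $\prop_D(L_j) = 0$, using the $\alpha$-values for $[\mathbf{2},3,3]$, $[\mathbf{2},3]$, $[\mathbf{2},2,2]$ and $[\mathbf{2},2]$. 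The expectation is that this forces $L_1, L_2, L_3$ to be concurrent in a $[2,2,2]$ point. The configuration is then $6$ lines with $4$ triple points, which is the complete quadrilateral with the weights of $D_Q$; uniqueness up to $\PGL_3(\CC)$ is noted in the introduction.

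\emph{Main obstacle.} The hardest step is the combinatorial exclusion of $k=7$: showing that no arrangement of $7$ lines realizes the forced local configurations. I would first try pure incidence counting. If that is inconclusive, I would fall back on the global constraint $\Prop(\PP^2,D)=0$, equivalently $\sP=0$, using the table of $\lambda_T$ to rule out the remaining candidates.
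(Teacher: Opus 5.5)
\textbf{There is a genuine gap in the case $k=7$.} Your overall route is the right one: invoke \cref{P2:one-line}, propagate along the weight-3 lines, then test $\prop_D$ on the weight-2 lines. But you do not supply the step that excludes $k=7$, and the mechanism you propose for it fails.

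\emph{Incidence counting cannot work here, because the forced configuration is realizable.} Take four lines in general position. Give three of them weight 3 and the fourth weight 2. Add the three diagonals, each joining two opposite vertices, with weight 2.
\begin{itemize}
\item The weight-3 lines meet pairwise in $[2,3,3]$ points.
\item Each weight-3 line meets the fourth side in a $[2,2,3]$ point.
\item So each weight-3 line carries exactly $[2,2,\mathbf{3}]\,[2,\mathbf{3},3]^2$ and has $\prop_D = 0$.
\item The diagonals meet pairwise in ordinary double points, since they are not concurrent in characteristic $0$.
\end{itemize}
Hence your claim that the lines through $D_a$'s $[2,2,3]$ point ``give more than 7 lines or force coincidences'' is false.

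\emph{Your last-resort fallback also fails.} $\Prop(\PP^2,D)=0$ is not a hypothesis of the proposition. In any case it carries no information. For a line arrangement with only $[a,b]$ and $[a,b,c]$ singularities, $\sP(\PP^2,D)$ vanishes identically, because every $d_i(d_i-1)$ and every $\theta_x$ is zero. So $\Prop(\PP^2,D)=\frac12\sum_i(1-\frac{1}{b_i})\prop_D(D_i)$ is already zero under your hypotheses.

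\emph{The missing argument is a $\prop_D$ computation for one specific weight-2 line.}
\begin{enumerate}
\item $D_1$'s configuration already shows there are exactly three weight-3 lines. Every other line meets $D_1$ once, and only the two $[2,\mathbf{3},3]$ points contribute weight-3 lines. So your "at least 3" is in fact "exactly 3", and the vague exclusion of longer cycles is unnecessary.
\item These three lines form a triangle whose vertices are the three $[2,3,3]$ points.
\item The weight-2 lines through the three vertices are distinct, since a line through two vertices is a side. So exactly one of the four weight-2 lines, call it $L$, avoids all vertices.
\item When $k=7$ the weight-3 lines have no normal crossings. So $L$ must meet each weight-3 line at that line's $[2,2,3]$ point.
\item These three points are distinct and already account for all six other lines. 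So $L$ carries $[\mathbf{2},2,3]^3$.
\item Therefore $\prop_D(L)=-1+3\cdot\frac16=-\frac12\neq 0$, a contradiction. In the example above, $L$ is the fourth side.
\end{enumerate}

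\textbf{For $k=6$ your plan is fine, but carry out the computation rather than stating an expectation.} Use $\alpha_{[\mathbf{2},3,3]}=\frac1{12}$, $\alpha_{[\mathbf{2},3]}=\frac23$, $\alpha_{[\mathbf{2},2]}=\frac12$ and $\alpha_{[\mathbf{2},2,2]}=\frac14$. If the three weight-2 lines are not concurrent, $\prop_D(L_j)=-1+\frac1{12}+\frac23+2\cdot\frac12=\frac34$. If they are concurrent, $\prop_D(L_j)=-1+\frac1{12}+\frac23+\frac14=0$. Only the concurrent case survives, which gives $D_Q$.
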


\begin{proof}
Assume that $\prop_D(D_i) = 0$ for all $D_i$ and that there is at least one line of weight $\geq 3$. The only possible weights are 2 and 3 by \cref{P2:one-line}, so either $k = 6$ and every line of weight 3 has singularity configuration
\begin{gather*}
    [2,\mathbf{3}]\,[2,\mathbf{3},3]^2
\end{gather*}
or $k = 7$ and every line of weight 3 has singularity configuration
\begin{gather*}
    [2,2,\mathbf{3}]\,[2,\mathbf{3},3]^2.
\end{gather*}
The singularity data of one weight-3 line determines all of the weights, since every other line intersects the given weight-3 line exactly once. In both cases there are three lines of weight 3 and the remaining lines have weight 2.

\underline{Case 1}: $k = 6$. The three lines of weight $3$ form a (combinatorial) triangle with vertices the $[2,3,3]$ singularities because there are no $[3,3,3]$ singularities. Then there are exactly three $[2,3,3]$ singularities in the arrangement. The weight-2 lines incident to each of the three $[2,3,3]$ singularities are all distinct because the line between two $[2,3,3]$ vertices is a weight-3 line. There are only three weight-2 lines in the entire arrangement, so every weight-2 line is incident to a $[2,3,3]$ vertex.

It remains to determine whether the three weight-2 lines meet at a triple point or not. The singularity configuration on a weight-2 line $D_1$ must be one of 
\begin{gather*}
    [\mathbf{2},3,3]\,[\mathbf{2},3]\,[\mathbf{2},2]^2 \qquad\text{ or }\qquad
    [\mathbf{2},3,3]\,[\mathbf{2},3]\,[\mathbf{2},2,2]
\end{gather*}
because $D_1$ meets exactly 5 other lines, and all of the weight-3 lines are already accounted for in the first two singularities. In the first case,
\begin{gather*}
    \prop_D(D_1)
    = -1 + \alpha_{[\mathbf{2},3,3]} + \alpha_{[\mathbf{2},3]} + 2\alpha_{[\mathbf{2},2]}
    = -1 + \frac{1}{12} + \frac 23 + \frac 12 + \frac 12
    = \frac 34 \neq 0.
\end{gather*}
In the second case,
\begin{gather*}
    \prop_D(D_1) = -1 + \alpha_{[\mathbf{2},3,3]} + \alpha_{[\mathbf{2},3]} + \alpha_{[\mathbf{2},2,2]} = -1 + \frac{1}{12} + \frac 23 + \frac 14 = 0.
\end{gather*}
Therefore the singularity configuration of any weight-2 line must be
\begin{gather*}
    [\mathbf{2},3]\,[\mathbf{2},3,3]\,[\mathbf{2},2,2]
\end{gather*}
and the three weight-2 lines meet in a common point. Then this is a line arrangement of 6 lines with 4 triple points $[2,2,2]\, [2,3,3]^3$. Since $\PGL_3(\CC)$ acts transitively on ordered 4-tuples of points with no three collinear, it follows that this arrangement is projectively equivalent to the weighted complete quadrilateral $D_Q$.

\medskip
\underline{Case 2}: $k=7$. As in the previous case, the three lines of weight 3 form a triangle with vertices the $[2,3,3]$ singularities because there are no $[3,3,3]$ singularities, and the weight-2 lines incident to each of the three $[2,3,3]$ singularities are all distinct.

There is a unique line $D_1$ with weight 2 that does not meet any $[2,3,3]$ singularity. Then $D_1$ meets all three weight-3 lines in $[2,2,3]$ singularities. These three singularities already account for all 6 other lines, so the singularity configuration of $D_1$ is
\begin{gather*}
    [\mathbf{2},2,3]^3.
\end{gather*}
But
\begin{gather*}
    \prop_D(D_1) = -1 + 3 \alpha_{[\mathbf{2},2,3]} = -1 + 3 \cdot \frac 16 = - \frac 12 \neq 0.
\end{gather*}
Therefore, there are no arrangements of $k = 7$ lines with the desired properties.
\end{proof}

\begin{proposition}
\label{P2:lines-weight-2}
Let $D$ be an orbifold divisor on $\PP^2$ consisting of $k$ lines, with all weights $2$. If $D$ is a ball quotient divisor then $D$ is (up to projective automorphisms) the ball quotient divisor $D_H$ from \cref{P2:DQ-DH-are-ball-quotients}.
\end{proposition}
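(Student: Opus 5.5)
Since all weights equal $2$, I would first pin down the possible singularities. The components are lines, hence smooth and pairwise normal-crossing, so by \cref{reflection-group-singularities} every singular point of $D$ is of type $[a,b]$ or $[a,b,c]$. With all weights equal to $2$, the only possibilities are $[2,2]$ (a double point) and $[2,2,2]$ (a triple point). In particular there are no points of multiplicity $\geq 4$. The argument will then be purely combinatorial, using the curve constraint \cref{constraints:curve} together with the ampleness in \cref{ball-quotients:orbifold-ample}.

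Next I would fix a line $D_1$ and let $q_2$ and $q_3$ be the numbers of double and triple points on it. As in the proof of \cref{P2:one-line}, $D_1$ meets every other line exactly once, so $q_2 + 2q_3 = k-1$. For a line of weight $2$ we have $K_{\PP^2}\cdot D_1 + (1 + \tfrac 22) D_1^2 = -3 + 2 = -1$. Reading $\alpha_{[\mathbf{2},2]} = \tfrac12$ and $\alpha_{[\mathbf{2},2,2]} = \tfrac14$ from \cref{table:singularity-pair-coefficients}, as already used in \cref{P2:lines-not-weight-2}, gives
\begin{gather*}
    \prop_D(D_1) = -1 + \tfrac12 q_2 + \tfrac14 q_3 .
\end{gather*}
Setting this to zero yields $2q_2 + q_3 = 4$. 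The nonnegative integer solutions are $(q_2,q_3) = (2,0), (1,2), (0,4)$, which give $k = 3, 6, 9$ respectively.

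Ampleness then rules out the small cases. We have $K_{\PP^2} + D \equiv_\lin (-3 + \tfrac k2)H$, and by \cref{ball-quotients:orbifold-ample} this must be ample, so $k \geq 7$. This leaves only $k = 9$. In that case every line carries exactly $4$ triple points and no double points. Counting incidences gives $3t_3 = 9 \cdot 4$, so there are $t_3 = 12$ triple points and no other singularities.

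Finally, I would invoke the uniqueness theorem of Lampa-Baczy\'nska--W\'ojcik quoted in the introduction: an arrangement of $9$ lines with $12$ triple points is projectively equivalent to the dual Hesse arrangement. Hence $D$ is $D_H$. As a consistency check, $\Prop(\PP^2,D) = -\tfrac94 + 12\cdot\tfrac{3}{16} = 0$.

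I do not expect a real obstacle here. The only point needing care is that the hypotheses genuinely force every singularity to be $[2,2]$ or $[2,2,2]$, which \cref{reflection-group-singularities} settles. The rest is the short Diophantine analysis above followed by the citation.
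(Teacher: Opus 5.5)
Your proposal is correct and follows the paper's proof essentially step for step. The curve constraint $-1 + \frac12 q_2 + \frac14 q_3 = 0$ together with $q_2 + 2q_3 = k-1$ gives $k \in \{3,6,9\}$, ampleness of $K_{\PP^2}+D$ then forces $k=9$, incidence counting yields $12$ triple points, and the uniqueness of the dual Hesse arrangement finishes the argument; the paper also lists $k=0$ as a formal possibility, and your bound $k \geq 7$ rules it out in the same way.
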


\begin{proof}
    Let $D$ be a ball quotient divisor consisting of $k$ lines, all with weight 2.
    For any line $D_i$, note that
    \begin{gather*}
        \prop_D(D_i) = -1 + \sum_{x \in D_\sing \cap D_i} \alpha_{x,D_i} = 0
    \end{gather*}
    by \cref{constraints}.
    The only possible singularities on $D$ are $[2,2]$ and $[2,2,2]$, which have coefficients
    \begin{gather*}
        \alpha_{[\mathbf{2},2,2]} = \frac 14,\qquad
        \alpha_{[\mathbf{2},2]} = \frac 12.
    \end{gather*}
    The only singularity configurations that give $\prop_D(D_i) = 0$ are:
    \begin{gather*}
    \begin{array}{@{}ll}
        {[\mathbf{2},2]^2}                   &(k = 3) \\
        {[\mathbf{2},2]\,[\mathbf{2},2,2]^2} &(k = 6) \\
        {[\mathbf{2},2,2]^4                } &(k = 9)
    \end{array}
    \end{gather*}
    Therefore, $k$ must be one of $0$, $3$, $6$, or $9$. Then $k = 9$ by \cref{ball-quotients:orbifold-ample}, since
    \begin{gather*}
        K_{\PP^2} + D \equiv_\lin -3H + k \cdot \frac 12 H
    \end{gather*}
    is ample if and only if $k \geq 7$.

    If every line has the singularity configuration $[\mathbf{2},2,2]^4$, then the only singular points of $D$ are triple points. By counting incidences, an arrangement of 9 lines with only triple points and no other singularities must have 12 triple points. The dual Hesse arrangement is the unique (up to projective equivalence) arrangement of 9 lines with 12 triple points (e.g. \cite[Theorem 1]{lampa-baczynska-wojcik-2019-dha}), so $D$ is (projectively equivalent to) the ball quotient divisor $D_H$ from \cref{P2:DQ-DH-are-ball-quotients}.
\end{proof}

\newpage 
\begin{proposition}
\label{P2:lines-small-cases}
    Let $D$ be an orbifold divisor on $\PP^2$ consisting of $k \leq 4$ lines. Then $D$ is not a ball quotient divisor.
\end{proposition}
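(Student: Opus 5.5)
The plan is to use only the ampleness criterion, \cref{ball-quotients:orbifold-ample}, with \cref{constraints} available as a fallback. For a line arrangement $D = \sum_{i=1}^k \big(1 - \frac{1}{b_i}\big) D_i$ we have
\begin{gather*}
    K_{\PP^2} + D \equiv_\lin \Big( -3 + \sum_{i=1}^k \Big(1 - \frac{1}{b_i}\Big) \Big) H .
\end{gather*}
This class is ample if and only if $\sum_i \big(1 - \frac{1}{b_i}\big) > 3$. Each summand is strictly less than $1$, so the sum is strictly less than $k \leq 3$ whenever $k \leq 3$. Hence $K_{\PP^2} + D$ is not ample, and $D$ is not a ball quotient divisor.

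The only remaining case is $k = 4$. Ampleness now requires $\sum_{i=1}^4 \frac{1}{b_i} < 1$, which is certainly possible, for example with all $b_i \geq 5$. So ampleness alone does not settle this case, and I will use the curve constraint $\prop_D(D_1) = 0$ from \cref{constraints}.

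On a line $D_1$ in an arrangement of four lines, the other three lines meet $D_1$ in one of two ways: three normal crossings ($q_2 = 3$, $q_3 = 0$), or one triple point together with one normal crossing ($q_2 = 1$, $q_3 = 1$). For a line, $K_{\PP^2} \cdot D_1 + \big(1 + \frac{2}{b_1}\big) D_1^2 = -2 + \frac{2}{b_1}$. The normal-crossing coefficient $\alpha_{[a,\mathbf{b_1}]}$ equals $1 - \frac{1}{a}$, since there is a single mirror ($\rho = 1$) and $\ihat = 1$. Thus in the first configuration
\begin{gather*}
    \prop_D(D_1) = -2 + \frac{2}{b_1} + \sum_{j=2}^{4} \Big(1 - \frac{1}{b_j}\Big) = 1 + \frac{2}{b_1} - \sum_{j=2}^4 \frac{1}{b_j},
\end{gather*}
which vanishes only if $\sum_{j \geq 2} \frac{1}{b_j} = 1 + \frac{2}{b_1} > 1$.

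I would therefore pick $D_1$ to be a line of largest weight, so that each $\frac{1}{b_j} \leq \frac{1}{2}$, and combine this equation with the ampleness inequality $\sum_{i=1}^4 \frac{1}{b_i} < 1$. The equation gives $\sum_{i=1}^4 \frac{1}{b_i} = 1 + \frac{3}{b_1} > 1$, a contradiction. This kills the first configuration immediately.

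In the second configuration the triple point is of type $[a,b,c]$, so all three lines through it carry weights from the list $\{2,2,c\}$, $\{2,3,3\}$, $\{2,3,4\}$, $\{2,3,5\}$. In particular, one of these lines has weight $2$. Then
\begin{gather*}
    \prop_D(D_1) = -2 + \frac{2}{b_1} + \alpha_{x,D_1} + \Big(1 - \frac{1}{b_4}\Big).
\end{gather*}
Using $\alpha_{T,C} \leq 1 - \frac{3}{2b_1}$ for triple points (as in Case~1 of \cref{P2:one-line}), this gives
\begin{gather*}
    \prop_D(D_1) \leq -\frac{1}{2b_1} - \frac{1}{b_4} < 0.
\end{gather*}
So any line through a triple point fails the curve constraint. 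Note that this bound does not even need $D_1$ to be of maximal weight.

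The main obstacle is bookkeeping: I must make sure that every four-line arrangement has one of these two local pictures on the chosen line. Four concurrent lines would give a quadruple point, which is not a reflection group singularity and so cannot occur, and the case of two triple points on a single line is impossible with only four lines. If the inequality $\alpha_{T,C} \leq 1 - \frac{3}{2b_1}$ turns out to be unavailable, the fallback is to check the finitely many triple-point types directly against \cref{table:singularity-pair-coefficients}.
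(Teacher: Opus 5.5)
Your treatment of $k \le 3$, and of a line $D_1$ whose three intersection points are all normal crossings, is correct. The triple-point case, however, contains an error that makes that step fail. Substituting $\alpha_{x,D_1} \le 1 - \frac{3}{2b_1}$ actually gives
\begin{gather*}
    \prop_D(D_1) \le -2 + \frac{2}{b_1} + \Big(1 - \frac{3}{2b_1}\Big) + \Big(1 - \frac{1}{b_4}\Big) = \frac{1}{2b_1} - \frac{1}{b_4},
\end{gather*}
not $-\frac{1}{2b_1} - \frac{1}{b_4}$, and this is not negative when $b_4 \ge 2b_1$.

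The claim that every line through a triple point violates the curve constraint is in fact false. Suppose $D_1, D_2, D_3$ have weight $2$ and meet in a $[2,2,2]$ point, and $D_4$ has weight $4$. Then $\prop_D(D_1) = -1 + \alpha_{[\mathbf{2},2,2]} + \alpha_{[\mathbf{2},4]} = -1 + \frac14 + \frac34 = 0$. Likewise, a weight-$3$ line through $[2,2,\mathbf{3}]$ with $b_4 = 6$ gives $-\frac43 + \frac12 + \frac56 = 0$, which attains your bound. So your fallback of checking table entries one by one cannot rescue this step: the curve constraint genuinely holds in such configurations.

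The missing idea is to use ampleness against the triple point, which is what the paper does. Every triple-point type $[b_1,b_2,b_3]$ in \cref{reflection-group-singularities} ($[2,2,c]$, $[2,3,3]$, $[2,3,4]$, $[2,3,5]$) has $\frac{1}{b_1} + \frac{1}{b_2} + \frac{1}{b_3} > 1$. This contradicts the inequality $\sum_{i=1}^4 \frac{1}{b_i} < 1$ you already derived. Alternatively, four lines carry at most one triple point, and the fourth line misses it and meets the other three in normal crossings. Your first-case argument then applies to that line, since it never used the maximality of $b_1$.

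With either repair, $D$ is normal-crossing. Your computation $\prop_D(D_1) = 1 + \frac{2}{b_1} - \sum_{j \ge 2} \frac{1}{b_j} > \frac{3}{b_1} > 0$ is then a valid and shorter substitute for the paper's final step. The paper instead expands $\Prop(\PP^2,D) = -\sum_i \beta_i^2 + \sum_{i<j} \beta_i\beta_j$ and bounds it below by $2\beta_1^2$. That estimate has the minor advantage of not needing ampleness, whereas yours does.
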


\begin{proof}
    Let $D$ be an orbifold divisor on $\PP^2$ consisting of $k \leq 4$ lines.
    If $D$ is a ball quotient divisor then $K_{\PP^2}+D$ is ample by \cref{ball-quotients:orbifold-ample} and $\Prop(\PP^2,D) = 0$ by \cref{constraints}.

    Suppose that $K_{\PP^2} + D$ is ample. For a line arrangement,
    \begin{gather*}
        K_{\PP^2} + D
        \equiv_\lin -3H + \sum_{i=1}^k \l(1 - \frac{1}{b_i}\r) H
        = \l( k-3 - \sum_{i=1}^k \frac{1}{b_i} \r) H.
    \end{gather*}
    Evidently if $k \leq 3$ then $K_{\PP^2}+D$ is not ample. Therefore $k = 4$ and
    \begin{gather}
    \label{::k-4-ample}
        \frac{1}{b_1} + \frac{1}{b_2} + \frac{1}{b_3} + \frac{1}{b_4} < 1.
    \end{gather}
    Every singularity on $D$ is either a double point or a triple point, since $D$ is an orbifold divisor. If $D$ contains a triple point, assume that it is $[b_1,b_2,b_3]$ without loss of generality. Then
    \begin{gather*}
        \frac{1}{b_1} + \frac{1}{b_2} + \frac{1}{b_3} > 1,
    \end{gather*}
    which contradicts \eqref{::k-4-ample}. Therefore $D$ is a normal-crossing line arrangement. 

    Write $\beta_i \defeq 1 - \frac{1}{b_i}$ for brevity. Assume that $\beta_4 \geq \beta_3 \geq \beta_2 \geq \beta_1$ and let $\epsilon = \beta_4 - \beta_1 \in [0, \frac 12)$. Since $D$ is a normal-crossing line arrangement:
\begin{align*}
    \Prop(\PP^2,D)
    &= - \beta_1^2 - \beta_2^2 - \beta_3^2 - \beta_4^2 + \beta_1\beta_2 + \beta_1\beta_3 + \beta_1\beta_4 + \beta_2\beta_3 + \beta_2\beta_4 + \beta_3\beta_4 \\
    &= \beta_1(\beta_2 - \beta_1) + \beta_2(\beta_3 - \beta_2) + \beta_3(\beta_4 - \beta_3) + \beta_4(\beta_1 + \beta_2 - \beta_4) + \beta_1\beta_3 \\
    &\overset{\text{(a)}}{\geq}
    \beta_1(\beta_2 - \beta_1) + \beta_1(\beta_3 - \beta_2) + \beta_1(\beta_4 - \beta_3) + \beta_4(2\beta_1 - \beta_4) + \beta_1^2 \\
    &= \beta_1(\beta_4 - \beta_1) + \beta_4 (2\beta_1 - \beta_4) + \beta_1^2 \\
    &= \beta_1\epsilon + (\beta_1 + \epsilon)(\beta_1 - \epsilon) + \beta_1^2 \\
    &= 2 \beta_1^2 + \epsilon(\beta_1 - \epsilon)
    \overset{\text{(b)}}{\geq}
      2 \beta_1^2 > 0
\end{align*}
\begin{enumerate}[label=(\alph*)]
\item
    Note that $\beta_i \geq \beta_1$ and $\beta_{i+1} - \beta_i \geq 0$ by assumption so $\beta_i(\beta_{i+1} - \beta_i) \geq \beta_1 (\beta_{i+1} - \beta_i)$.
\item
    Note that $0 \leq \epsilon < \frac 12$ and $\beta_1 \geq \frac 12$, so $\epsilon(\beta_1 - \epsilon) \geq 0$.
\end{enumerate}

This proves that if $D$ is an orbifold divisor consisting of $k \leq 4$ lines and $K_{\PP^2}+D$ is ample, then $\Prop(\PP^2,D) \neq 0$.
\end{proof}
\medskip

\begin{proof}[Proof of \cref{main-theorem}]
  By \cref{P2:DQ-DH-are-ball-quotients}, $D_Q$ and $D_H$ are ball quotient divisors. Conversely, assume that $D$ is a ball quotient divisor on $\PP^2$ whose components are smooth and pairwise normal-crossing. By \cref{P2:reduce-to-lines}, $D$ must be a line arrangement. By \cref{P2:lines-small-cases}, $D$ has at least 5 lines. If all lines in $D$ have weight 2, then \cref{P2:lines-weight-2} states that $D$ is projectively equivalent to $D_H$. Otherwise, \cref{P2:lines-not-weight-2} states that $D$ is projectively equivalent to $D_Q$ (because $\prop_D(D_i) = 0$ for all components $D_i$ of a ball quotient divisor $D$ by \cref{constraints}).
\end{proof}

\newpage
\printbibliography[heading=bibintoc]

\end{document}